\newcommand{\overbar}[1]{\mkern 1.5mu\overline{\mkern-3mu#1\mkern-0.5mu}\mkern 0.5mu}
\newcommand{\dd}{\mathrm{d}}
\newcommand\restr[2]{{
  \left.\kern-\nulldelimiterspace 
  #1 
  \vphantom{\big|} 
  \right|_{#2} 
  }}
\DeclareMathOperator{\Ker}{Ker}
\DeclareMathOperator{\lcm}{lcm}
\DeclareMathOperator{\Id}{Id}
\DeclareMathOperator{\tr}{tr}
\newcommand{\jump}{\vskip 2mm}
\numberwithin{equation}{section}
\theoremstyle{plain}
\newtheorem{theorem}{Theorem}[section]
\newtheorem{proposition}[theorem]{Proposition}
\newtheorem{lemma}[theorem]{Lemma}
\newtheorem{corollary}[theorem]{Corollary}
\newtheorem*{conjecture}{Conjecture}
\theoremstyle{definition}
\newtheorem{definition}[theorem]{Definition}
\newtheorem{remark}{Remark}[section]
\numberwithin{theorem}{subsection}
\numberwithin{equation}{subsection}
\title[Yano's conjecture]{Yano's conjecture}
\author[G. Blanco]{Guillem Blanco}
\thanks{The author was supported by the grants Ministerio de Economía y Competitividad MTM2015-69135-P, Generalitat de Catalunya 2017SGR-932 and Agencia Estatal de Investigaci\'on PID2019-103849GB-I00. The author is supported by a Postdoctoral Fellowship of the Research Foundation -- Flanders}
\address{Department of Mathematics\\ KU Leuven,
Celestijnenlaan 200B, 3001 Leuven, Belgium.}
\email{guillem.blanco@kuleuven.be}
\begin{document}

\begin{abstract}
We present a proof of a conjecture proposed by Yano \cite{yano82} about the generic \( b \)-exponents of irreducible plane curve singularities.
\end{abstract}

\maketitle

\section{Introduction} \label{sec:introduction}

Let \( f : (\mathbb{C}^{n+1}, \boldsymbol{0}) \longrightarrow (\mathbb{C}, 0) \) be a germ of a holomorphic function. The \emph{local Bernstein-Sato} polynomial \( b_{f, \boldsymbol{0}}(s) \) of \( f \) is defined as the monic polynomial of smallest degree that fulfills the following functional equation
\[ P(s) \cdot f^{s+1} = b_{f, \boldsymbol{0}}(s) f^s,  \]
where \( P(s) \) is a differential operator in \( D_{\mathbb{C}^{n+1}, \boldsymbol{0}} \otimes \mathbb{C}[s] \), with \( D_{\mathbb{C}^{n+1}, \boldsymbol{0}} \) being the ring of local \( \mathbb{C} \)-linear differential operators and \( s \) a formal variable. The Bernstein-Sato polynomial \( b_{f}(s) \) was introduced in the algebraic case, independently, by Bernstein \cite{bernstein72} using the theory of algebraic \( D \)-modules and by Sato \cite{sato-shintani-90} in the context of prehomogeneous vector spaces. The existence of the Bernstein-Sato polynomial in the local case is due to Björk \cite{bjork74}. 

\jump

If \( f : (\mathbb{C}^{n+1}, \boldsymbol{0}) \longrightarrow (\mathbb{C}, 0) \) defines an isolated singularity, there is a characterization of the Bernstein-Sato polynomial \( b_{f, \boldsymbol{0}}(s) \) in terms of the \emph{Gauss-Manin connection} of \( f \) given by Malgrange in \cite{malgrange75}. First, since \( -1 \) is always a root of \( b_{f, \boldsymbol{0}}(s) \), the \emph{reduced Bernstein-Sato polynomial} is defined as \( \tilde{b}_{f, \boldsymbol{0}}(s) := b_{f, \boldsymbol{0}}(s) / (s+1) \). Malgrange's result asserts that \( \tilde{b}_{f, \boldsymbol{0}}(s) \) is equal to the minimal polynomial of a certain action induced by the Gauss-Manin connection in a finite-dimensional complex vector space related to the \emph{Brieskorn lattice} of \( f \). In contrast, the \emph{\( b \)-exponents} are the roots of the characteristic polynomial of this action, see Sections \ref{sec:brieskorn-lattice} and \ref{sec:malgrange-theorem} for the exact definitions.

\jump

In general, very little is known about the roots of the Bernstein-Sato polynomial. They are negative rational numbers, see the works of Malgrange \cite{malgrange75} and Kashiwara \cite{kashiwara76}. A set of candidate roots can be obtained from the numerical data associated with a resolution of the singularity of \( f \), see Kashiwara \cite{kashiwara76} and Lichtin \cite{lichtin89}. However, this set of candidates is usually much larger than the actual set of roots of \( b_f(s) \). Also, the roots of \( b_{f}(-s) \) are bounded between zero and the dimension \( n + 1 \), see \cite{saito94}.

\jump

On the other hand, the roots of the Bernstein-Sato polynomial \( b_f(s) \) are connected to many different invariants associated with the singularity of \( f \). The roots of \( b_f(s) \) are related to the poles of the complex zeta function of \( f \), which was the original motivation of Bernstein \cite{bernstein72}. If \( \alpha \) is a root of \( b_{f, \boldsymbol{0}}(s) \), then \( \exp(-2 \pi \imath \alpha) \) is an eigenvalue of the local monodromy of \( f \), see the works \cite{malgrange75, malgrange83} of Malgrange. The \emph{spectral numbers} introduced by Steenbrink in \cite{steenbrink89} of an isolated singularity in the range \( (0, 1] \) are always the opposites in sign to some roots of the Bernstein-Sato polynomial, for the non-isolated singularity case we refer to \cite{budur03}. Similarly, the \emph{jumping numbers} associated with the \emph{multiplier ideals}, introduced in \cite{ELSV04}, which are in \( (0, 1] \), are roots of \( b_f(-s) \), see also \cite{budur-saito05} and \cite{lichtin86}.

\jump

However, the main difference between the roots of the Bernstein-Sato polynomial and most of the singularity invariants connected with \( b_{f}(s) \) is that the roots of \( b_f(s) \) are not topological invariants of the singularity. Precisely, this means that the roots of the Bernstein-Sato polynomial are not constant along the fibers of topologically trivial deformations of \( f \), see \cite{kato1, kato2} and \cite[\S 11]{varchenko80}, for some examples. It is then natural to wonder about the generic behavior of the roots of \( b_f(s) \), or equivalently of the \( b \)-exponents, along topologically trivial deformations of a singularity. This problem is still open even in the case of isolated plane curve singularities.

\jump

The goal of this paper is to contribute to this problem by proving a conjecture of Yano \cite{yano82} about the generic behavior of the \( b \)-exponents of irreducible plane curves along topologically trivial deformations of \( f \). Yano's conjecture states that given an irreducible plane curve singularity \( f \), that is, in the case \( n = 1 \), there exists some topologically trivial deformation of \( f \) such that, for generic fibers of the deformation, the \( b \)-exponents can be computed from the combinatorial data of the minimal resolution of singularities of \( f \). In addition, Yano proposes a set of candidates for these generic \( b \)-exponents in terms of the resolution data of \( f \). For the precise statement see \cref{sec:yanos-conjecture}.

\jump

Yano's conjecture was known to be true in the case that \( f \) has a single Puiseux pair, see the work of Cassou-Noguès \cite{cassou-nogues88}. More recently, Artal-Bartolo, Cassou-Noguès, Luengo, and Melle-Hernández \cite{ABCNLMH16}, proved the case of two Puiseux pairs under the hypothesis that the eigenvalues of the monodromy of \( f \) are pair-wise different. The author proved in \cite{blanco19} the conjecture for any number of Puiseux pairs also under the hypothesis that the eigenvalues of the monodromy are pair-wise different. This hypothesis ensures that the minimal polynomial giving \( b_{f, \boldsymbol{0}}(s) \) is equal to the characteristic polynomial giving the \( b \)-exponents. Then, under this hypothesis, the roots of the Bernstein-Sato polynomial of \( f \) can be determined from the poles of the complex zeta function of \( f \).

\jump

This work is organized as follows. The first part of the work contains results valid in any dimension. Sections \ref{sec:milnor-fiber} and \ref{sec:gauss-manin} introduce the usual terminology on the Milnor fiber, monodromy, and the Gauss-Manin connection. \Cref{sec:periods} introduces the periods of integrals along vanishing cycles on the Milnor fiber, following the work of Malgrange \cite{malgrange74, malgrange74bis}. In the next sections \ref{sec:geometric-sections} and \ref{sec:elementary-sections}, we introduce the geometric and elementary sections in the cohomology of the Milnor fiber from \cite{varchenko80}. Sections \ref{sec:brieskorn-lattice} and \ref{sec:malgrange-theorem}, present the relation between the Brieskorn lattice, the \( b \)-exponents, and the geometric sections using the results of Malgrange \cite{malgrange75} and Varchenko \cite{varchenko80}. In \cref{sec:semicontinuity}, we generalize a semicontinuity result of Varchenko \cite{varchenko80}. Following \cite{varchenko82}, \Cref{sec:resolution-sing} reviews the use of resolution of singularities and semi-stable reduction to study the periods of integrals. Finally, in \cref{sec:filtrations}, we construct the full asymptotic expansion of the periods in terms of a resolution of singularities.

\jump

The second part of this work deals with plane curve singularities. We present first, in \cref{sec:yanos-conjecture}, Yano's conjectures. After the work of Loeser \cite{loeser88}, \Cref{sec:multivalued} introduces cohomology with coefficients in a local system and a result of Deligne and Mostow \cite{deligne-mostow86}. \Cref{sec:plane-curves} shows which terms of the asymptotic expansions of the periods of integrals are non-zero using the structure of the minimal resolution of plane curve singularities. \cref{sec:dual-geometric} deals with the existence of certain dual locally constant geometric sections that are needed for the semicontinuity argument. \Cref{sec:char-poly} contains the proof of a technical result about the characteristic polynomial of the monodromy on some part of the Milnor fiber. In \cref{sec:plane-branches}, we introduce all the terminology related to the semigroups of irreducible plane curves, Teissier's monomial curve, and its deformations \cite{teissier-appendix}. Finally, we prove, in \cref{sec:generic-bexponents}, the semicontinuity of the \( b \)-exponents for plane branches and Yano's conjecture.

\jump

\textbf{Acknowledgments.} The author would like to thank his advisors, Maria Alberich-Carramiñana and Josep Àlvarez Montaner, for the fruitful discussions, the helpful comments and suggestions, and the constant support during the development of this work. The author would also like to thank Ben Lichtin for providing many helpful comments on a first draft of this work.

\section{General results}

\subsection{Milnor fiber} \label{sec:milnor-fiber}
Let \( f : ( \mathbb{C}^{n+1}, \boldsymbol{0}) \longrightarrow (\mathbb{C}, 0) \) be a germ of a holomorphic function defining an isolated singularity. For \( 0 < \delta \ll \epsilon \ll 1 \), let \( B_{\epsilon} \subset \mathbb{C}^{n+1} \) be the ball of radius \( \epsilon \) centered at the origin, \( T \subset \mathbb{C} \) the disk of radius \( \delta \) centered at zero, and \( T' \setminus \{0\} \) the punctured disk. Whenever is necessary to specify the radius of the disk we will write \( T_\delta \) and \( T'_{\delta} \). Abusing the notation, we will also denote by \( f \) a representative of the germ. Set
\begin{equation*}
X := B_{\epsilon} \cap f^{-1}(T), \quad X' := X \setminus f^{-1}(0), \quad X_t := B_{\epsilon} \cap f^{-1}(t),\quad t \in T.
\end{equation*}
The restriction \( f' : X' \longrightarrow T' \) is a smooth fiber bundle such that the diffeomorphism type of the fiber \( X_t \) is independent of the choice of \( \delta, \epsilon \) and \( t \in T' \), see \cite[\S 4]{milnor68}. Any of the fibers \( X_t \), or rather its diffeomorphism type, is called the \emph{Milnor fiber} of \( f \). The homology \( H_i (X_t, \mathbb{C}) \) (resp. cohomology \(H^i(X_t, \mathbb{C})\)) groups of the Milnor fiber are finite-dimensional vector spaces that vanish for \( i \geq n+1 \).

\jump

The action of the fundamental group \( \pi(T', t)  \) of the base of the fibration induces a diffeomorphism \( h \) of each fiber \( X_t \) which is usually called the \emph{geometric monodromy}. Alternatively, this same action induces \( h_* \) (resp. \( h^* \)) an endomorphism of \( H_*(X_t, \mathbb{C}) \) (resp. \( H^*(X_t, \mathbb{C}) \)), the \emph{algebraic monodromy} of the singularity \( f \). The fundamental result about the structure of the monodromy endomorphism is the so-called \emph{Monodromy Theorem}.
\begin{theorem}[Monodromy \cite{sga7-1, clemens69, brieskorn70}]
The operator \( h_* \) is quasi-unipotent, that is, there are integers \( p \) and \( q \) such that \( (h_*^p - \Id)^q = 0 \). In other terms, the eigenvalues of the monodromy are roots of unity. Moreover, one can take \( q = n + 1 \).
\end{theorem}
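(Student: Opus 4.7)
The plan is to prove quasi-unipotency via an embedded resolution of singularities followed by a base change that converts the fibration into a semi-stable degeneration, and then to extract the bound on the nilpotency index from the combinatorics of the exceptional divisor and the associated weight filtration.

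First, by Hironaka's theorem, I would pick a log resolution \( \pi : Y \longrightarrow X \) of \( f^{-1}(0) \), so that \( g := f \circ \pi \) defines a simple normal crossings divisor \( \sum_i N_i E_i \) on \( Y \) and \( \pi \) restricts to an isomorphism away from \( f^{-1}(0) \). In particular, the Milnor fiber \( X_t \) is diffeomorphic to \( g^{-1}(t) \) for small \( t \neq 0 \), and the geometric monodromies (hence the algebraic monodromies \( h_* \)) of \( f \) and \( g \) agree. Set \( N := \lcm(N_i) \).

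Next, I would perform the base change \( t = s^N \), pull back the fibration, and normalize/resolve. In local charts where \( g = u \cdot x_1^{N_1} \cdots x_k^{N_k} \) with \( u \) a unit, a toric modification yields a smooth total space whose central fiber is reduced with simple normal crossings; this is the standard semi-stable reduction. On such a model the nearby cycle sheaf carries a canonical mixed Hodge structure (Steenbrink), and the monodromy acts unipotently on \( H^*(X_t, \mathbb{C}) \); equivalently, the Clemens--Schmid sequence produces a nilpotent logarithm. Pulling back via \( s \mapsto s^N \) gives \( (h_*^N - \Id)^{q} = 0 \) for some \( q \), so \( h_* \) is quasi-unipotent with eigenvalues among the \( N \)-th roots of unity.

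For the sharp bound \( q = n+1 \), I would invoke the monodromy weight filtration on \( H^i(X_t, \mathbb{C}) \): on the semi-stable model, the logarithm \( N_* := \log h_*^N \) shifts weights by \( -2 \), and since the weights on \( H^i \) lie in \( [0, 2i] \) with \( i \leq n \), one obtains \( N_*^{n+1} = 0 \). Geometrically, this reflects the fact that at most \( n+1 \) components of the central fiber can meet at a single point, so chains of vanishing cycles produced by Picard--Lefschetz-type transformations have length at most \( n+1 \); this is essentially the content of the arguments in \cite{clemens69} and \cite{brieskorn70}.

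The main obstacle I anticipate is precisely the sharpness \( q = n+1 \): bare quasi-unipotency follows almost formally from semi-stable reduction, but the precise nilpotency index requires either the full limit mixed Hodge structure with its weight filtration, or a detailed Mayer--Vietoris analysis on the dual complex of the exceptional divisor. The resolution step itself is non-trivial in that it rests on Hironaka's desingularization, but once available it reduces the statement to combinatorics of the divisor \( \sum N_i E_i \) and classical linear algebra on the graded pieces of the weight filtration.
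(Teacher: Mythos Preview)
The paper does not prove this theorem; it is stated as a classical result with citations to \cite{sga7-1, clemens69, brieskorn70} and no proof is given. There is therefore no ``paper's own proof'' to compare your attempt against.

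That said, your sketch is a reasonable outline of one standard route to the Monodromy Theorem: embedded resolution plus semi-stable reduction gives quasi-unipotency, and the weight filtration on the limit mixed Hodge structure (or equivalently the combinatorics of the dual complex of the central fiber, where at most \( n+1 \) components meet at a point) yields the nilpotency index \( n+1 \). One point to be careful about: your weight argument as written says ``weights on \( H^i \) lie in \( [0,2i] \) with \( i \leq n \)'', which would only give \( N_*^{i+1} = 0 \) on \( H^i \); the sharp bound \( q = n+1 \) on \( H^n \) is exactly what you want, but you should phrase the argument so that it applies to the top cohomology \( H^n(X_t,\mathbb{C}) \) directly rather than to all \( H^i \) simultaneously. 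Also, the semi-stable reduction step after base change \( t = s^N \) does not in general produce a smooth total space by toric modification alone in higher dimensions; one typically needs either the full semi-stable reduction theorem of Kempf--Knudsen--Mumford--Saint-Donat or works instead with the orbifold model as in Steenbrink. These are refinements rather than fatal gaps, and the overall strategy is sound.
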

Since \( f \) defines an isolated singularity, the topology of the Milnor fiber is quite simple. In this case, the Milnor fiber has the homotopy type of a bouquet of \( \mu \) \( n \)-dimensional spheres, where \( \mu \) is the Milnor number of the singularity, see \cite[Thm. 6.5]{milnor68}. Therefore, \( \widetilde{H}_i(X_t, \mathbb{C}) = 0 \) for \( i \neq n \) and \( \dim_{\mathbb{C}} H_n(X_t, \mathbb{C}) =: \mu \). Furthermore, the Milnor number coincides with
\[ \mu = \dim_{\mathbb{C}} \frac{\Omega^{n+1}_{X, \boldsymbol{0}}}{\dd f \wedge \Omega^{n}_{X, \boldsymbol{0}}}. \]

\subsection{Gauss-Manin connection} \label{sec:gauss-manin}

The Milnor fibration \( f' : X' \longrightarrow T' \) of an isolated singularity \( f \) defines a holomorphic vector bundle \( f^* : H^n \longrightarrow T' \) on \( T' \), given by
\[ H^n := \bigcup_{t \in T'} H^n(X_t, \mathbb{C}), \]
the natural projection of \( H^n \) to \( T' \). Similarly, one has the dual vector bundle \( f_* : H_n \longrightarrow T' \). The vector bundle \( H^n \) (resp. \( H_n \)) is sometimes called the cohomological (resp. homological) Milnor fibration. Let us denote by \( \mathcal{H}^n \) (resp. \( \mathcal{H}_n \)) the locally free sheaf of holomorphic sections of the vector bundle \( H^n \) (resp. \( H_n\)). Since \( f' \) is a locally trivial fibration, \( H^n \) carries a natural integrable connection
\[ \nabla^* : \mathcal{H}^n \longrightarrow \Omega_{T'} \otimes_{\mathcal{O}_{T'}} \mathcal{H}^n \quad \textrm{or} \quad \nabla^*_{\partial/\partial_t} =: \partial^*_t : \mathcal{H}^n \longrightarrow \mathcal{H}^n, \]
called the \emph{Gauss-Manin connection} of the singularity. The kernel of the connection \( \Ker \nabla^* \) defines a \emph{local system}, which is a locally constant sheaf of complex vector spaces, whose sections are the locally constant sections of \( \mathcal{H}^n \) with respect to the connection. We will denote by \( \mathcal{L}^* := \Ker \nabla^* \) the sheaf of locally constant sections of \( \mathcal{H}^n \). In this way, one has that \( \mathcal{H}^n = \mathcal{L}^* \otimes_{\underline{\mathbb{C}}_{T'}} \mathcal{O}_{T'}\). In this setting, \( \mathcal{L}^* \) coincides with \( R^n f'_* \underline{\mathbb{C}}_{X'} \). The same constructions hold for the dual vector bundle \( H_n \). Brieskorn \cite[Satz 2]{brieskorn70} proved that the Gauss-Manin connection of an isolated singularity has \emph{regular singularities}, see also the work of Malgrange \cite[Thm. 4.1]{malgrange74}.

\jump

Denote by \( \Lambda \) the set of all eigenvalues of the monodromy operator \( h^* \). Fix \( \lambda \in \Lambda \), denote by \( H^n_{\lambda} \) the set of vectors of \( H^n \) that are annihilated by \( (h^* - \lambda \Id)^{n+1} \) and by \( f^*_{\lambda} \) the natural projection from \( H^n_{\lambda} \) to \( T' \). Then, \( H^n_{\lambda} \) is a holomorphic subbundle of \( H^n \) with a natural connection \( \nabla^*_{\lambda} \) coming naturally from \( \nabla^* \) and such that \( H^n = \bigoplus_{\lambda \in \Lambda} H^n_{\lambda} \). Let \( \mathcal{L}^*_{\lambda} = \ker \nabla^*_\lambda \) be the local system generated by the sections of \( \mathcal{L}^* \) that are annihilated by the endomorphism \( (h^* - \lambda \Id)^{n+1} \).

\subsection{Periods of integrals} \label{sec:periods}

Let \( \eta \in \Gamma(X, \Omega^n_X) \) be a holomorphic \( n \)-form on \( X \) and let \( \gamma(t), t \in T' \) be a locally constant section of the fibration \( H_n \). Since the restriction of \( \eta \) to \( X_t \) is a holomorphic form of maximal degree, and thus it is closed, the integral of \( \eta \) along the cycle \( \gamma(t) \)
\begin{equation*}
 I(t) := \int_{\gamma(t)} \eta
\end{equation*}
is well-defined. Furthermore, by the monodromy action, \( \gamma(t) \) is a multivalued function of \( t \) with values in \( H_n(X_t, \mathbb{C}) \) and \( I(t) \) defines a multivalued function on \( T' \). It is a holomorphic function since, by Leray's residue theorem, see for instance \cite[\S 1.5]{brieskorn70}, it holds that
\begin{equation} \label{eq:leray-residue}
I'(t) = \frac{d}{dt} \int_{\gamma(t)} \eta = \int_{\gamma(t)} \frac{\dd \eta}{\dd f},
\end{equation}
where \( \dd \eta / \dd f \) is the Gelfand-Leray form of \( \dd \eta \) which denotes the restriction to \( X_t \) of any holomorphic form \(\xi\) such that \( \xi \wedge \dd f = \dd \eta \). In local coordinates, if \( \dd \eta = g\, \dd x_0 \wedge \dots \wedge \dd x_n \), then, on the set \( \{ x \in X \ |\ \partial f/\partial x_i \neq 0 \} \), \( \dd \eta / \dd f \) is defined by the restriction to \( X_t \) of the form
\begin{equation} \label{eq:divide_df}
(-1)^{i+1}\bigg(\frac{\partial f}{\partial x_i}\bigg)^{-1} g \, \dd x_0 \wedge \cdots \wedge \widehat{\dd x_i} \wedge \cdots \wedge \dd x_n.
\end{equation}

Now, let \( \gamma(t) \) be a \emph{vanishing cycle}, that is, a locally constant section of \( H_n \) such that \( \gamma(t) \rightarrow 0 \) as \( t \rightarrow 0 \). Malgrange \cite[Lemma 4.5]{malgrange74} proves that if \( \gamma(t) \) is such a cycle, \( I(t) \rightarrow 0 \) as \( t \rightarrow 0 \) in any sector \( |\arg t\,| \leq C, C \in \mathbb{R}_{+} \). In addition, assume that \( \gamma(t) \) is a generalized eigenvector of the monodromy automorphism \( h_n \) of \( H_n(X_t, \mathbb{C}) \), i.e. \( (h_n - \lambda \Id )^p \gamma(t) = 0 \) with \( p \) minimal and \( \lambda \in \mathbb{C}^* \) a root of unity. Then, since the Gauss-Manin connection is regular, one has that \( I(t) \) has the following expansion, see \cite{malgrange74},
\begin{equation} \label{eq:integral-expansion-simple}
  I(t) = \int_{\gamma(t)} \eta = \sum_{\substack{\alpha \in L(\lambda)\\ 0 \leq q < p}} a_{\alpha, q} t^{\alpha}(\ln{t})^q, \quad a_{\alpha, q} \in \mathbb{C},
\end{equation}
where \( L(\lambda) \) is the set of \( \alpha > 0 \) such that \( \lambda = \exp({-2 \pi \imath \alpha}) \). Notice that if \( \gamma(t) \) is a generalized eigenvector of the monodromy but is not a vanishing cycle one can still consider the integral \( I(t) \). However, in this case, \( \lambda = 1 \) since \( I(t) \) has an expansion as in \cref{eq:integral-expansion-simple} plus a non-zero constant term.

\jump

Since for any top form \( \omega \in \Gamma(X, \Omega^{n+1}_X) \) there exists \(\eta \in \Gamma(X, \Omega^{n}_X)\) such that \( \omega = \dd \eta \), we will consider only the integrals of \( \dd \eta / \dd f \) along any vanishing cycle \( \gamma(t) \in H_n(X_t, \mathbb{C}) \). Then, by the Monodromy Theorem and \cref{eq:integral-expansion-simple}
\begin{equation} \label{eq:integral-expansion-total}
\int_{\gamma(t)} \frac{\omega}{\dd f} = \sum_{\lambda \in \Lambda} \sum_{\alpha \in L(\lambda)} \sum_{0 \leq k \leq n} \frac{1}{k!} a_{\alpha-1, k} t^{\alpha - 1} (\ln{t})^k.
\end{equation}

\subsection{Geometric sections} \label{sec:geometric-sections}

For every top form \( \omega \in \Gamma(X, \Omega^{n+1}_X) \) and every \( t \in T' \), the form \(\omega / \dd f\) in \( X_t \) defines an element of \( H^n(X_t, \mathbb{C}) \). Hence, every such \(\omega\) defines a section \( s[\omega] \) of the bundle \( H^n \). Indeed, by the previous section, if \( \gamma(t) \) is a locally constant section of the fibration \( H_n \), the pairing \( \langle s[ \omega ], \gamma \rangle \) given by \cref{eq:integral-expansion-total} is holomorphic and hence \( s[\omega] \) is a holomorphic section of the bundle \( H^n \), i.e. an element of \( \mathcal{H}^n \). Following the terminology of Varchenko \cite{varchenko80}, these sections will be called \emph{geometric sections}.

\jump

Given \( \varpi \) a local section of \( \mathcal{H}^n \) and \( \gamma \) a locally constant section of \( H_n \), one has that \( \frac{d}{dt} \langle \varpi, \gamma \rangle = \langle \partial^*_t \varpi, \gamma \rangle \), since horizontal sections generate \( \mathcal{H}^n \) over \( \mathcal{O}_{T'} \). Consequently, by \cref{eq:leray-residue}, the Gauss-Manin connection applied to the geometrical sections can be computed as \( \partial^*_t s[ \omega ] = s[ \dd( \omega / \dd f) ] \).

\jump

The complex numbers \( a_{\alpha, k} \) appearing in \cref{eq:integral-expansion-total} depend on \( \omega \) and \( \gamma(t) \). For a fixed \( \omega \), \( a_{\alpha, k} \) is a linear function on the space of locally constant sections of \( H_n \). As a consequence, the number \( a_{\alpha, k} \) defines a locally constant section \( A_{\alpha, k}^\omega(t) \) of the fibration \( H^n \) by the rule \( \langle A_{\alpha, k}^\omega(t), \gamma(t) \rangle := a_{\alpha, k}(\omega, \gamma) \). By construction, the geometrical sections \( s[ \omega ] \) are defined by
\begin{equation} \label{eq:geometric-section-definition}
s[\omega] := \sum_{\lambda \in \Lambda} \sum_{\alpha \in L(\lambda)} \sum_{0 \leq k \leq n} \frac{1}{k!} t^{\alpha-1} (\ln{t})^k A_{\alpha-1, k}^\omega(t).
\end{equation}
The sections \( A^{\omega}_{\alpha-1, k}(t) \) will be called \emph{locally constant geometric sections}.

\begin{lemma}[{\cite[Lemma 4]{varchenko80}}] \label{lemma:generators-local-system}
The local system \( \mathcal{L}^* \) is generated by the locally constant sections \( A_{\alpha-1, k}^\omega \), where \( \omega \in \Gamma(X, \Omega^{n+1}_X) \) and the \( \alpha, k \) are the same as in \cref{eq:integral-expansion-total}.
\end{lemma}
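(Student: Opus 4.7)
The plan is to dualize via the period pairing. Since $T'$ is connected and $\mathcal{L}^*$ is a local system of rank $\mu = \dim_{\mathbb{C}} H^n(X_t,\mathbb{C})$, it suffices to show that at one (hence every) point $t \in T'$ the values $A^\omega_{\alpha-1,k}(t) \in H^n(X_t,\mathbb{C})$ span, as $\omega$ ranges over $\Gamma(X,\Omega^{n+1}_X)$ and $(\alpha,k)$ over the indices in \eqref{eq:integral-expansion-total}. Via the non-degenerate Poincaré pairing $H^n(X_t,\mathbb{C}) \otimes H_n(X_t,\mathbb{C}) \to \mathbb{C}$, this is equivalent to the dual statement: any locally constant section $\gamma$ of $\mathcal{H}_n$ satisfying $\langle A^\omega_{\alpha-1,k},\gamma\rangle = 0$ for all admissible $\omega,\alpha,k$ must vanish identically.

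Fix such a $\gamma$. By the very construction of the $A^\omega_{\alpha-1,k}$ as the coefficients in the asymptotic expansion \eqref{eq:integral-expansion-total}, together with the linear independence of the monomials $t^{\alpha-1}(\ln t)^k$ forced by the regularity of the Gauss--Manin connection, the hypothesis is equivalent to
\[ \int_{\gamma(t)} \frac{\omega}{\dd f} \equiv 0 \quad \text{on } T', \]
for every $\omega \in \Gamma(X,\Omega^{n+1}_X)$. Equivalently, $\gamma(t)$ annihilates the subspace of $H^n(X_t,\mathbb{C})$ spanned by the geometric section values $\{s[\omega](t)\}$. It then remains to verify that this subspace equals all of $H^n(X_t,\mathbb{C})$ at each $t \in T'$; granted this, non-degeneracy of Poincaré duality forces $\gamma \equiv 0$ and the lemma follows.

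The main obstacle is precisely this fiberwise surjectivity of the Gelfand--Leray evaluation $\omega \mapsto [\omega/\dd f\,|_{X_t}]$. I would argue in two steps: first, $X_t$ is a smooth Stein manifold of complex dimension $n$, so its holomorphic de Rham complex computes $H^\bullet(X_t,\mathbb{C})$, and every top cohomology class is represented by a (necessarily closed) global holomorphic $n$-form $\eta_t$ on $X_t$; second, shrinking the Milnor representative slightly, $\eta_t$ extends to a form $\eta \in \Gamma(X,\Omega^n_X)$, and then $\omega := \dd f \wedge \eta \in \Gamma(X,\Omega^{n+1}_X)$ satisfies $\omega/\dd f\,|_{X_t} = \eta_t$ by the local formula \eqref{eq:divide_df}. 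A slicker route, implicit throughout the paper, is Brieskorn's theorem that the Brieskorn lattice $\Omega^{n+1}_{X,\boldsymbol{0}}/(\dd f\wedge \dd\,\Omega^{n-1}_{X,\boldsymbol{0}})$ is a free $\mathcal{O}_{T,0}$-module of rank $\mu$ whose localization to $T'$ recovers $\mathcal{H}^n$; this gives the required surjectivity at every $t\in T'$ at once.
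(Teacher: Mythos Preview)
The paper does not supply its own proof of this lemma; it is quoted verbatim from \cite[Lemma~4]{varchenko80}. Your argument is correct and is essentially the standard one: reduce by duality to the fiberwise surjectivity of $\omega \mapsto [\omega/\dd f\,|_{X_t}]$, and then invoke either the Stein extension argument or, more cleanly, Brieskorn's result that $H''_{f,\boldsymbol{0}}$ is free of rank $\mu$ over $\mathcal{O}_{T,0}$ and generates $\mathcal{H}^n$ over $\mathcal{O}_{T'}$, which the paper records in \S\ref{sec:brieskorn-lattice}. The second route is preferable here since it is already available in the paper and avoids the (minor) bookkeeping of extending $\eta_t$ from $X_t$ to $X$ and checking that $\dd f \wedge \eta$ recovers the prescribed class up to sign.
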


Let \( S_\alpha \) be the sheaf of all locally constant sections of the bundle \( H^n \) generated by the sections \( A_{\alpha, k}^\omega \) with fixed \( \alpha \in \mathbb{Q} \) and where \( \omega \in \Gamma(X, \Omega^{n+1}_X),\, k = 0, \dots, n \). Since \( A_{\alpha, k}^{\omega} = A_{\alpha+1, k}^{f \omega} \), one has that \( S_{\alpha} \subseteq S_{\alpha + 1} \). After \cref{lemma:generators-local-system}, \( \mathcal{L}^* = \sum_{\alpha} S_{\alpha} \).

\jump

Following \cite[\S 4]{varchenko80}, fix \( \lambda \in \Lambda \) and for every \( \alpha \in L(\lambda) -1 \) one defines the holomorphic subbundle \( f^*_{\lambda, \alpha} : H^n_{\lambda, \alpha} \longrightarrow T' \) of \( H^n_\lambda \) generated over \( \mathcal{O}_{T'} \) by the sections of \( S_{\alpha} \). According to \cite{varchenko80}, these subbundles have the following properties. If \( \alpha, \alpha' \in L(\lambda) -1 \) and \( \alpha > \alpha' \), then \( H^n_{\lambda, \alpha'} \) is a subbundle of \( H^n_{\lambda, \alpha} \) and for \( \alpha \in L(\lambda)-1 \) sufficiently large then \( H^n_{\lambda, \alpha} = H^n_{\lambda} \). Moreover, these subbundles are invariant under the covariant derivative of the connection \( \nabla^*_{\lambda} \) and the monodromy endomorphism \( h^* \).

\subsection{Elementary sections} \label{sec:elementary-sections}

We will proceed now to define the \emph{elementary sections} of \( \mathcal{H}^n \) in the sense of Varchenko \cite[\S 6]{varchenko80}. In order to do that, one needs to understand first the natural action of the monodromy on the local system \( \mathcal{L}^* \) in terms of the sections \( A_{\alpha, k}^{\omega}\). Let \( \omega \in \Gamma(X, \Omega^{n+1}_X), \lambda \in \Lambda, \alpha \in L(\lambda)-1 \) and assume that, at least, one of the sections \( A_{\alpha, 0}^\omega, \dots, A_{\alpha, n}^\omega \) is not equal to zero. Let \( p = \max \{ k \in \mathbb{Z} \ |\ A_{\alpha, k}^{\omega} \neq 0 \}  \).

\jump

The natural action of the monodromy \( h_* \) on the homology bundle \( H_n \) translates into a natural action of \( (h^*)^{-1} \) on the sections \( A_{\alpha, k}^\omega \) via the integral in \cref{eq:integral-expansion-total}. Namely, Varchenko shows in \cite[Lemma 5]{varchenko80} that
\begin{equation} \label{eq:monodromy-As}
  (h^*)^{-1} A_{\alpha, k}^\omega = \lambda^{-1} \sum_{j = k}^p \frac{(2 \pi \imath)^{j-k}}{(j-k)!} A_{\alpha, j}^\omega.
\end{equation}
Therefore, the sections \( A_{\alpha, 0}^\omega, \dots, A_{\alpha, p}^\omega \) are in the locally constant subsheaf \( \mathcal{L}^*_\lambda \) of \( H^n_{\lambda, \alpha} \) that is invariant under \( h^* \) and \( A_{\alpha, 0}^\omega \) is a cyclic section of this subsheaf. After \cref{eq:monodromy-As}, instead of the operator \( h^* - \lambda \Id \), we will consider the operator \((h^*)^{-1} \lambda - \Id \) on \( \mathcal{L}^*_{\lambda} \). Notice that, \(((h^*)^{-1} \lambda - \Id)^{n+1} = 0 \) on \( \mathcal{L}^*_{\lambda} \). Now, define the operator \( \ln{((h^*)^{-1}} \lambda) \) on \( \mathcal{L}^*_\lambda \) by the formula
\[ \ln{((h^*)^{-1} \lambda)} := \sum_{j = 1}^{\infty} \frac{(-1)^{j-1}}{j} ((h^*)^{-1} \lambda - \Id)^j \]
as in \cite[Lemma 5]{varchenko80}. Hence, from \cref{eq:monodromy-As},
\begin{equation} \label{eq:cyclic-generator}
  A_{\alpha, k}^{\omega} = \left( \frac{\ln{((h^*)^{-1} \lambda)}}{2 \pi \imath} \right)^k A_{\alpha, 0}^\omega,
\end{equation}
and, therefore,
\[ t^\alpha \sum_{k = 0}^p \frac{1}{k!} (\ln{t})^k A_{\alpha, k}^\omega(t) = \exp{\left[ \ln{t} \left( \alpha \Id + \frac{\ln{((h^*)^{-1} \lambda)}}{2 \pi \imath} \right) \right]} A_{\alpha, 0}^\omega(t). \]
Then, Varchenko \cite{varchenko80} defines the \emph{elementary section} associated with a locally constant section \( A \) of \(\mathcal{L}^*_{\lambda} \) and \( \alpha \in L(\lambda)-1 \) as the section \( s_\alpha[A] \) of \( \mathcal{H}_{\lambda} := \mathcal{L}^*_{\lambda} \otimes_{\underline{\mathbb{C}}_{T'}} \mathcal{O}_{T'} \) defined by
\begin{equation}
s_\alpha[A](t) :=   \exp{\left[ \ln{t} \left( \alpha \Id + \frac{\ln{((h^*)^{-1} \lambda)}}{2 \pi \imath} \right) \right]} A(t).
\end{equation}
We end this section with the following properties of elementary sections.
\begin{lemma}[{\cite[Lemma 9]{varchenko80}}] \label{lemma:elementary-sections}
Let \( \lambda \in \Lambda, \alpha \in L(\lambda)-1 \) and \( A \) a section of \( \mathcal{L}^*_\lambda \), then:
\begin{enumerate}
\item The sections \( s_\alpha[A] \) are holomorphic univalued sections of the vector bundle \( H^n_\lambda \).
\item If the sections \( A_0, \dots, A_p \) of \(\mathcal{L}_\lambda^* \) are linearly independent at every fiber, then the sections \( s_\alpha[A_0], \dots, s_\alpha[A_p] \) are linearly independent at every point \( t \in T' \).
\item The action of the covariant derivative on an elementary section is:
\[ t \partial^*_t s_\alpha[A] = \alpha s_\alpha[A] + (2 \pi \imath)^{-1}s_\alpha[ \ln{((h^*)^{-1} \lambda)}A]. \]
\end{enumerate}
\end{lemma}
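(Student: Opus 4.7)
The plan is to handle the three parts in order, all by direct computation. For part (1), write $s_\alpha[A](t) = \exp[\ln t \cdot B]\,A(t)$ with $B := \alpha\Id + (2\pi\imath)^{-1}\ln((h^*)^{-1}\lambda)$, and verify univaluedness by tracking analytic continuation along a positive loop around $0$. Under such a loop, $\ln t \mapsto \ln t + 2\pi\imath$ while $A \mapsto h^*A$ (the natural monodromy action on the local system $\mathcal{L}^*_\lambda$). Since $B$ commutes with itself, the exponential factor picks up the prefactor $\exp[2\pi\imath B]$. Using that $\lambda$ is a scalar and that $((h^*)^{-1}\lambda - \Id)^{n+1} = 0$ on $\mathcal{L}^*_\lambda$, the series for $\ln((h^*)^{-1}\lambda)$ terminates and $\exp(\ln((h^*)^{-1}\lambda)) = (h^*)^{-1}\lambda$, giving $\exp[2\pi\imath B] = e^{2\pi\imath\alpha}(h^*)^{-1}\lambda$. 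The hypothesis $\alpha + 1 \in L(\lambda)$ forces $\lambda = e^{-2\pi\imath\alpha}$, so this reduces to $(h^*)^{-1}$, which cancels the $h^*$ coming from $A$. Holomorphicity then follows from the finite expansion
\[
s_\alpha[A] = t^\alpha\sum_{k=0}^n \frac{(\ln t)^k}{k!}(B')^k A, \qquad B' := (2\pi\imath)^{-1}\ln((h^*)^{-1}\lambda),
\]
each of whose summands is a scalar function holomorphic on simply connected subsets of $T'$ times a locally constant section; the univaluedness just proved extends this globally.

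Part (2) is immediate from the invertibility of $\exp[\ln t \cdot B]$ at each $t \in T'$: applying its inverse to a hypothetical vanishing combination $\sum c_i\, s_\alpha[A_i](t) = 0$ gives $\sum c_i\, A_i(t) = 0$, contradicting the fiberwise linear independence of the $A_i$.

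For part (3), differentiate the finite expansion above term by term. Because each $(B')^k A$ is locally constant, $\partial^*_t$ annihilates these factors and only the scalar coefficients contribute. Using $t\partial_t(t^\alpha(\ln t)^k) = \alpha t^\alpha(\ln t)^k + k t^\alpha (\ln t)^{k-1}$ and reindexing $j = k-1$ in the second sum, one recognises the remainder as $s_\alpha[B'A] = t^\alpha\sum_j \frac{(\ln t)^j}{j!}(B')^{j+1}A$, producing $t\partial^*_t s_\alpha[A] = \alpha s_\alpha[A] + s_\alpha[B'A]$, which is the claim after unfolding the definition of $B'$. The only delicate step in the whole argument is the cancellation in part (1): the constraint $\alpha + 1 \in L(\lambda)$ is there precisely to make $e^{2\pi\imath\alpha}$ cancel $\lambda$, and the unipotency of $(h^*)^{-1}\lambda$ on $\mathcal{L}^*_\lambda$ is what legitimises treating $\exp$ and $\ln$ as honest inverses. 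Everything else is bookkeeping.
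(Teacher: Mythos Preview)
The paper does not give its own proof of this lemma; it is quoted verbatim from Varchenko \cite{varchenko80} and used as a black box. Your argument is the standard direct verification and is correct.

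One small point in part~(1) deserves to be made explicit. After analytic continuation you obtain
\[
\exp\bigl[2\pi\imath B\bigr]\,\exp\bigl[(\ln t)\,B\bigr]\,h^*A \;=\; (h^*)^{-1}\,\exp\bigl[(\ln t)\,B\bigr]\,h^*A,
\]
and to conclude that this equals $\exp[(\ln t)\,B]\,A$ you need $(h^*)^{-1}$ to commute with $\exp[(\ln t)\,B]$, not merely that $B$ commutes with itself. This holds because $B$ is a polynomial in $(h^*)^{-1}$ (the logarithm is a terminating series in $(h^*)^{-1}\lambda-\Id$), hence commutes with $h^*$; you use this silently when you say the two factors ``cancel''. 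With that commutation noted, parts~(2) and~(3) are exactly as you describe: invertibility of the exponential on each fibre, and termwise differentiation of the finite expansion using that each $(B')^kA$ is horizontal.
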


\subsection{Brieskorn lattice} \label{sec:brieskorn-lattice}

In \cite{brieskorn70}, Brieskorn introduced the theory of local Gauss-Manin connections to give an algebraic description of the local monodromy of an isolated hypersurface singularity. In order to make an explicit computation of the monodromy, he considered the following \( \mathcal{O}_{T, 0} \)-module
\[ H''_{f, \boldsymbol{0}} := \frac{\Omega^{n+1}_{X, \boldsymbol{0}}}{\dd f \wedge \dd \Omega ^{n-1}_{X, \boldsymbol{0}}}, \]
which is called the \emph{Brieskorn lattice}. The rank of this module is equal to the Milnor number \( \mu \) of \( f \) and it is free of torsion by a result of Sebastiani \cite{sebastiani70}, see also \cite[Thm. 5.1]{malgrange74}. Let \( k \) be the field of fractions of \( \mathcal{O}_{T, 0} \) and consider the \( k \)-vector space \( H''_{f, \boldsymbol{0}} \otimes_{\mathcal{O}_{T,0}} k \). Then, the Brieskorn lattice \( {H}''_{f, \boldsymbol{0}} \) carries a meromorphic connection \( \partial_t : {H}''_{f, \boldsymbol{0}} \otimes k \longrightarrow {H}''_{f, \boldsymbol{0}} \otimes k \) defined by
\[ \partial_t \omega := f^{-\kappa_f} \psi - \kappa_f f^{-1} \omega, \qquad \textrm{with} \qquad \dd(f^{\kappa_f} \omega ) = \dd f \wedge \psi, \]
where \( \kappa_f \) is the minimum positive integer such that \( f^{\kappa_f} \in (\partial f/ \partial x_0, \dots, \partial f/ \partial x_n) \mathcal{O}_{X, \boldsymbol{0}} \). The connection \( \partial_t \) on the Brieskorn lattice is identified with the Gauss-Manin connection \( \partial^*_t \) of the singularity \( f \). The monodromy of the connection \( \partial_t \) coincides with the local monodromy of the isolated singularity \( f \) \cite[Satz 1]{brieskorn70}.

\jump

The relation between the Brieskorn lattice \( H''_{f, \boldsymbol{0}} \) and the geometrical sections of the cohomological bundle \( H^n \) is as follows. Let \( j : T' \xhookrightarrow{\quad} T \) be the open inclusion. Consider \( (j_*\mathcal{H}^n)_{0} \) the stalk at zero of the direct image by \( j \) of the sheaf \( \mathcal{H}^n \). By the results of Brieskorn \cite{brieskorn70}, \( H''_{f, \boldsymbol{0}} \) is identified with a \( \mathcal{O}_{T, 0} \)-submodule of \((j_* \mathcal{H}^n)_0 \) which generates \((j_* \mathcal{H}^n)_0 \) as a \((j_* \mathcal{O}_{T'})_0 \)-module. Indeed, if \( \omega \in \Omega^{n+1}_{X, \boldsymbol{0}} \) is the germ of a form representing \( \widetilde{\omega} \in H''_{f, \boldsymbol{0}} \), then \( s[\widetilde{\omega}](t) \) defines a cohomology class of \( H^{n}(X_t, \mathbb{C}) \) given by \( [\restr{\omega / \dd f}{X_t}] \), which does not depend on the representative. Furthermore, \( \partial^*_t s[\widetilde{\omega}] = s[\partial_t \widetilde{\omega}] \).

\subsection{Malgrange's theorem} \label{sec:malgrange-theorem}

Let \( b_{f,\boldsymbol{0}}(s) \in \mathbb{C}[s] \) be the local \emph{Bernstein-Sato polynomial} of \( f \). By definition \( b_{f, \boldsymbol{0}}(s) \) is the monic polynomial of smallest degree that satisfies the functional equation
\[ P(s) \cdot f^{s+1} = b_{f, \boldsymbol{0}}(s) f^s, \]
where \( P(s) \in D_{X, \boldsymbol{0}} \otimes_{\mathbb{C}} \mathbb{C}[s] \), with \( D_{X, \boldsymbol{0}} \) being the ring of local \( \mathbb{C} \)-linear differential operators and \( s \) a formal variable. In order to define the \( b \)-exponents, we are interested in the following characterization of the Bernstein-Sato polynomial of an isolated singularity in terms of the Brieskorn lattice given by Malgrange in \cite{malgrange75}. Since \( -1 \) is always a root of \( b_{f, \boldsymbol{0}}(s) \), one calls \( \tilde{b}_{f, \boldsymbol{0}}(s) := b_{f, \boldsymbol{0}}(s)/(s+1) \) the \emph{reduced Bernstein-Sato polynomial}.

\jump

Let \( \widetilde{H}''_{f, \boldsymbol{0}} := \sum_{k \geq 0}({t \partial_t})^k H''_{f, \boldsymbol{0}} \) be the saturation of the Brieskorn lattice with respect to the differential operator \( t \partial_t \). Here, the summation symbol means addition of \( \mathcal{O}_{T, 0} \)-modules and this sum is known to stabilize after the first \( \mu \) terms because \( \partial_t \) is regular. In this way, \( \widetilde{H}''_{f, \boldsymbol{0}} \) is a \( t \partial_t \)-stable lattice, that is, the connection \( \partial_t \) has a simple pole in \( \widetilde{H}''_{f, \boldsymbol{0}} \) and \( \widetilde{H}''_{f, \boldsymbol{0}} \) is isomorphic to \( H''_{f, \boldsymbol{0}} \) as \( k \)-vector spaces.

\begin{theorem}[{\cite[Thm. 5.4]{malgrange75}}]
The reduced Bernstein-Sato polynomial \( \tilde{b}_{f, \boldsymbol{0}}(s) \) equals the minimal polynomial of the endomorphism \( - \overline{\partial_t t} : \widetilde{H}''_{f, \boldsymbol{0}}/t \widetilde{H}''_{f, \boldsymbol{0}} \longrightarrow \widetilde{H}''_{f, \boldsymbol{0}}/t  \widetilde{H}''_{f, \boldsymbol{0}} \) of complex vector spaces of dimension \( \mu \).
\end{theorem}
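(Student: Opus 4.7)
The natural framework is the comparison between the Bernstein-Sato $\mathcal{D}$-module $\mathcal{O}_{X,\boldsymbol{0}}[s]f^s$ (on which the variable $s$ acts) and the Gauss-Manin system $(j_*\mathcal{H}^n)_0[\partial_t^{-1}]$ (on which $-\partial_t t$ acts), under the correspondence that identifies $s$ with $-\partial_t t$ and multiplication by $f$ with multiplication by $t$. By the discussion in \cref{sec:brieskorn-lattice}, the Brieskorn lattice $H''_{f,\boldsymbol{0}}$ embeds as a free $\mathcal{O}_{T,0}$-submodule of rank $\mu$ of this Gauss-Manin system via $\omega \mapsto s[\omega]$, and its saturation $\widetilde{H}''_{f,\boldsymbol{0}}$ is the smallest $t\partial_t$-stable lattice containing it.

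For the divisibility of the minimal polynomial by $\tilde b_{f,\boldsymbol{0}}$, I would translate the Bernstein-Sato equation $P(s)\cdot f^{s+1} = b_{f,\boldsymbol{0}}(s) f^s$ into the Gauss-Manin picture: for every $\omega$ representing a section of $H''_{f,\boldsymbol{0}}$, the equation becomes $b_{f,\boldsymbol{0}}(-\partial_t t)\cdot\omega \in t\cdot \widetilde{H}''_{f,\boldsymbol{0}}$, with the factor $(s+1)$ in $b_{f,\boldsymbol{0}}$ being absorbed precisely during the passage from $H''_{f,\boldsymbol{0}}$ to its saturation. Since $\widetilde{H}''_{f,\boldsymbol{0}}$ is $t\partial_t$-stable, reduction modulo $t\widetilde{H}''_{f,\boldsymbol{0}}$ yields $\tilde b_{f,\boldsymbol{0}}(-\overline{\partial_t t}) = 0$ on $\widetilde{H}''/t\widetilde{H}''$, so the minimal polynomial divides $\tilde b_{f,\boldsymbol{0}}(s)$.

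For the reverse divisibility, I would start from the minimal polynomial $m(s)$ of $-\overline{\partial_t t}$ on $\widetilde{H}''/t\widetilde{H}''$. The relations $m(-\partial_t t)\omega \in t\widetilde{H}''_{f,\boldsymbol{0}}$ for $\omega$ ranging over a generating set lift to identities in the Gauss-Manin system which, translated back through the correspondence with the $f^s$-module, yield functional equations of the form $(s+1)m(s) f^s = Q(s) f^{s+1}$ for suitable differential operators $Q(s)$ extracted from the lifting data. Minimality of $b_{f,\boldsymbol{0}}$ then forces $b_{f,\boldsymbol{0}}(s)\mid (s+1)m(s)$; combined with the fact that $(s+1)\mid b_{f,\boldsymbol{0}}$, this yields $\tilde b_{f,\boldsymbol{0}}(s) \mid m(s)$.

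The main obstacle is the bookkeeping of the lattice correspondence, specifically verifying that the saturation $\widetilde{H}''_{f,\boldsymbol{0}}$ is \emph{exactly} the $t\partial_t$-stable lattice whose quotient sees $\tilde b_{f,\boldsymbol{0}}(s)$ rather than $b_{f,\boldsymbol{0}}(s)$ itself or some other translate, so that the factor $(s+1)$ is correctly absorbed on one side and reinserted on the other. This rests on Brieskorn's regularity theorem for the Gauss-Manin connection (ensuring the saturation sum stabilizes after at most $\mu$ steps), on Sebastiani's torsion-freeness of $H''_{f,\boldsymbol{0}}$ (so that $\widetilde{H}''/t\widetilde{H}''$ has dimension exactly $\mu$), and on identifying the step in the Kashiwara-Malgrange $V$-filtration of the Gauss-Manin system that is captured by $\widetilde{H}''_{f,\boldsymbol{0}}$. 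Once these comparisons are pinned down, the rest of the argument is formal manipulation of functional equations.
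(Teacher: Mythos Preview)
The paper does not prove this theorem; it is quoted from Malgrange \cite[Thm.~5.4]{malgrange75} and used as a black box to \emph{define} the $b$-exponents and to set up \cref{thm:varchenko}. There is therefore no proof in the paper to compare your proposal against.

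That said, your sketch is in the spirit of Malgrange's original argument, which does proceed by identifying the $\mathcal{D}_{X,\boldsymbol{0}}[s]$-module generated by $f^s$ with (a microlocalization of) the Gauss-Manin system, sending $s$ to $-\partial_t t$ and $f$ to $t$. A few points in your outline are not yet pinned down. First, the statement that the Bernstein-Sato equation becomes $b_{f,\boldsymbol{0}}(-\partial_t t)\cdot \omega \in t\,\widetilde{H}''_{f,\boldsymbol{0}}$ for \emph{every} $\omega$ is not automatic: the functional equation is a single relation for the class $f^s$, and one must explain why it propagates to all of $H''_{f,\boldsymbol{0}}$ (Malgrange does this by showing that $H''_{f,\boldsymbol{0}}$ is generated over $\mathcal{O}_{T,0}[\partial_t^{-1}]$ by the class of $\dd x_0\wedge\cdots\wedge\dd x_n$, which corresponds to $f^s$). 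Second, your handling of the factor $(s+1)$ is the delicate point and your phrase ``absorbed precisely during the passage to the saturation'' hides the actual mechanism: the shift by $1$ comes from the identity $\partial_t t = t\partial_t + 1$ together with the fact that one compares the action on $\widetilde{H}''/t\widetilde{H}''$ (governed by $\overline{\partial_t t}$) with the action on $\widetilde{H}''/\partial_t^{-1}\widetilde{H}''$ (governed by $\overline{t\partial_t}$), and these two quotients differ exactly by a shift of eigenvalues by $1$. Without making this explicit, both divisibility directions in your sketch remain heuristic. Finally, the reverse direction requires more than lifting relations: one must know that the image of $f^s$ generates $\widetilde{H}''/t\widetilde{H}''$ as a $\mathbb{C}[s]$-module (cyclicity), which is again part of Malgrange's comparison theorem rather than a formal consequence of what you have written.
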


The roots of the characteristic polynomial of the endomorphism
\[ \overline{\partial_t t} : \widetilde{H}''_{f, \boldsymbol{0}}/t \widetilde{H}''_{f, \boldsymbol{0}} \longrightarrow \widetilde{H}''_{f, \boldsymbol{0}}/t  \widetilde{H}''_{f, \boldsymbol{0}} \]
are called the \( b \)-\emph{exponents} of the isolated singularity \( f \). The sign change between the roots of \( b_{f, \boldsymbol{0}}(s) \) and the \( b \)-exponents is just a convention. Yano's conjecture is expressed in terms of the \( b \)-exponents of an isolated singularity \( f \), see \cref{sec:yanos-conjecture}.

\jump

Next, we will present the relation, given by Varchenko in \cite[\S 8]{varchenko80}, between the elementary sections introduced in \cref{sec:elementary-sections} and the saturation of the Brieskorn lattice.

\jump

Consider the quotient vector bundle \( H^n_{\lambda, \alpha} / H^n_{\lambda, \alpha -1} \) over \( T' \) which will be denoted by \( F_\alpha \). Let \( \mathcal{F}_\alpha \) be the locally free sheaf of sections of the vector bundle \( F_\alpha \). We will denote by \( \mathcal{G}_\alpha \) the subsheaf of \( \mathcal{F}_\alpha \) generated over \( \underline{\mathbb{C}}_{T'} \) by the image of the elementary sections \( s_\alpha[A] \), with \( A \) a section of \( S_\alpha \), under the projection map
\[ \pi_\alpha : H^n_{\lambda, \alpha} \longrightarrow H^n_{\lambda, \alpha} / H^n_{\lambda, \alpha-1} = F_\alpha. \]
After \cref{lemma:elementary-sections}, for every value \( t \in T' \), the sections \( \mathcal{G}_\alpha \) generate the whole fiber of \( F_\alpha \). The restriction of the connection \( \nabla^*_\lambda \) of \( H_\lambda^n \) to \( H^n_{\lambda, \alpha} \) induces a connection 
in the quotient bundle \( F_\alpha \). 
Furthermore, by \cref{lemma:elementary-sections}, the operator \(  t \partial^*_t \) maps elementary sections to elementary sections and thus, it induces an endomorphism \( D_\alpha \) on \( {\mathcal{G}}_\alpha \) which has eigenvalues equal to \( \alpha \) at every fiber. For more details see \cite[Lemma 10]{varchenko80}.

\jump

If \( j : T' \xhookrightarrow{\quad} T \) denotes again the open inclusion and \( j_{!} \) is the extension by zero, then we have that \( j_{!} \mathcal{G}_\alpha \neq j_* \mathcal{G}_\alpha \), meaning that the stalk \( (j_* \mathcal{G}_\alpha)_0 \) is not zero. Indeed, by \cref{lemma:elementary-sections}, the elementary sections whose image under \( \pi_\alpha \) generate \( \mathcal{G}_\alpha \) are univalued. We will continue to denote \( D_\alpha \) the extension to \( j_* \mathcal{G}_{\alpha} \) of the endomorphism \( D_\alpha \) of \( \mathcal{G}_{\alpha} \).

\begin{theorem}{\cite[Thm.~13]{varchenko80}} \label{thm:varchenko}
Let \( \mathcal{G}_{\alpha-1}, \alpha \in L(\lambda), \lambda \in \Lambda \) be the locally constant sheaves defined above and consider the locally constant sheaf \( {\mathcal{G}} := \bigoplus_{\lambda \in \Lambda} \bigoplus_{\alpha \in L(\lambda)} {\mathcal{G}}_{\alpha-1} \) of complex vector spaces with the endomorphism \( \overbar{D} := \bigoplus_{\lambda \in \Lambda} \bigoplus_{\alpha \in L(\lambda)} D_{\alpha-1} \). Then, there exists a natural isomorphism of complex vectors spaces between \( (j_*{\mathcal{G}})_0 \) and \( \widetilde{H}''_{f, \boldsymbol{0}} / t \widetilde{H}''_{f, \boldsymbol{0}} \) and under this isomorphism, the endomorphism \( \overline{t\partial_t} \) in \( \widetilde{H}''_{f, \boldsymbol{0}} / t \widetilde{H}''_{f, \boldsymbol{0}} \) corresponds to \( \overbar{D}_0 \).
\end{theorem}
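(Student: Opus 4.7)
My plan is to construct an explicit natural map by using the expansion of each geometric section in terms of elementary sections, then identify its image modulo $t$ with $(j_*\mathcal{G})_0$, and finally verify that the induced map intertwines the two endomorphisms. The backbone of the argument is the description of the Brieskorn lattice as the $(j_*\mathcal{O}_{T'})_0$-module generated by the geometric sections $s[\widetilde{\omega}]$ with $\widetilde{\omega} \in H''_{f,\boldsymbol{0}}$, combined with \cref{lemma:elementary-sections} which controls how $t\partial^*_t$ acts on elementary sections.

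First I would rewrite the expansion \cref{eq:geometric-section-definition} of a geometric section as
\[
s[\omega] \;=\; \sum_{\lambda \in \Lambda} \sum_{\alpha \in L(\lambda)} s_{\alpha-1}\bigl[A^{\omega}_{\alpha-1,0}\bigr],
\]
where each summand is an elementary section associated to the cyclic locally constant section $A^{\omega}_{\alpha-1,0}$ of $\mathcal{L}^*_\lambda$, by virtue of \cref{eq:cyclic-generator}. By the definition of $H^n_{\lambda,\alpha-1}$ and the monotonicity $H^n_{\lambda,\alpha'} \subseteq H^n_{\lambda,\alpha}$, the elementary section $s_{\alpha-1}[A^{\omega}_{\alpha-1,0}]$ lies in $H^n_{\lambda,\alpha-1}$, and its image under $\pi_{\alpha-1}$ generates a local section of $\mathcal{G}_{\alpha-1}$. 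This produces a natural $\mathbb{C}$-linear map
\[
\Phi : H''_{f,\boldsymbol{0}} \longrightarrow \bigoplus_{\lambda \in \Lambda}\bigoplus_{\alpha \in L(\lambda)} (j_*\mathcal{G}_{\alpha-1})_0 = (j_*\mathcal{G})_0,\qquad \widetilde{\omega}\longmapsto \bigl(\pi_{\alpha-1}\,s_{\alpha-1}[A^{\omega}_{\alpha-1,0}]\bigr)_{\lambda,\alpha},
\]
which is well-defined because each elementary section is univalued by \cref{lemma:elementary-sections}(1).

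Next I would analyze the saturation $\widetilde{H}''_{f,\boldsymbol{0}} = \sum_{k\geq 0}(t\partial_t)^k H''_{f,\boldsymbol{0}}$. By \cref{lemma:elementary-sections}(3), the operator $t\partial^*_t$ preserves the class of elementary sections with fixed $(\lambda,\alpha)$, acting on $s_{\alpha-1}[A]$ as multiplication by $\alpha-1$ plus a nilpotent logarithmic correction; the saturation therefore stabilizes after finitely many steps and its image under the decomposition into elementary sections remains in the same subbundles $H^n_{\lambda,\alpha-1}$. I would deduce that $\Phi$ extends to $\widetilde{H}''_{f,\boldsymbol{0}}$, that its kernel contains $t\widetilde{H}''_{f,\boldsymbol{0}}$ (since multiplication by $t$ on elementary sections $s_{\alpha-1}[A]$ yields $s_{\alpha}[A]$, landing in a strictly smaller subbundle step of the filtration), and conversely that any relation modulo $t\widetilde{H}''_{f,\boldsymbol{0}}$ forces the leading coefficients $A^{\omega}_{\alpha-1,0}$ to project to zero in $\mathcal{G}_{\alpha-1}$. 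Injectivity on the quotient then follows from \cref{lemma:elementary-sections}(2), while surjectivity follows from \cref{lemma:generators-local-system}, which guarantees that every local section of $\mathcal{G}$ at $0$ comes from some $A^{\omega}_{\alpha-1,0}$.

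Finally I would check the compatibility of endomorphisms: by \cref{lemma:elementary-sections}(3), the action of $t\partial^*_t$ on $s_{\alpha-1}[A]$, read modulo $H^n_{\lambda,\alpha-2}$ and then applied to the projection $\pi_{\alpha-1}$, is precisely multiplication by $\alpha-1$ together with the nilpotent summand induced by $\ln((h^*)^{-1}\lambda)$, which is the definition of $D_{\alpha-1}$. Passing to the quotient by $t\widetilde{H}''_{f,\boldsymbol{0}}$ turns $t\partial_t$ into $\overline{t\partial_t}$ and gives the claimed intertwining with $\overbar{D}_0$. The main obstacle, I expect, is verifying that $\Phi$ really does descend to an \emph{isomorphism} on the quotient: one must exclude the possibility that elementary sections coming from different representatives $\widetilde{\omega} \in H''_{f,\boldsymbol{0}}$ conspire to produce, after applying $t\partial_t$ enough times, a section whose leading elementary components cancel but which nevertheless fails to lie in $tH''_{f,\boldsymbol{0}}$. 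Controlling this requires using that $\widetilde{H}''_{f,\boldsymbol{0}}$ is $t\partial_t$-stable with simple pole, so that its fiber at $0$ decomposes into generalized eigenspaces for $-\overline{\partial_t t}$, which match precisely the summands $(j_*\mathcal{G}_{\alpha-1})_0$ indexed by the eigenvalues $\alpha-1$.
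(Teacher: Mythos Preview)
The paper does not give its own proof of this theorem: it is simply quoted as \cite[Thm.~13]{varchenko80} and used as a black box. So there is no ``paper's proof'' to compare against; the relevant comparison is with Varchenko's original argument, which your sketch follows in spirit.

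Your outline is broadly the right one, but there is a slip in the kernel computation. You write that multiplication by $t$ sends $s_{\alpha-1}[A]$ to $s_{\alpha}[A]$, ``landing in a strictly smaller subbundle step of the filtration''. That is not what kills it under $\pi_{\alpha-1}$: the index $\alpha$ is \emph{larger}, not smaller. The correct mechanism is the identity $A^{f\omega}_{\alpha-1,k}=A^{\omega}_{\alpha-2,k}$ (recalled just before the definition of $S_\alpha$ in \cref{sec:geometric-sections}). Hence for $t\widetilde{\omega}$ the section $A^{f\omega}_{\alpha-1,0}$ already lies in $S_{\alpha-2}$, so $s_{\alpha-1}[A^{f\omega}_{\alpha-1,0}]$ is a section of $H^n_{\lambda,\alpha-2}$ and dies under $\pi_{\alpha-1}$. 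This is what you need for $t\widetilde{H}''_{f,\boldsymbol{0}}\subseteq\ker\Phi$.

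The genuinely delicate point, which you flag but do not resolve, is the reverse inclusion: showing that if all the projections $\pi_{\alpha-1}s_{\alpha-1}[A^{\omega}_{\alpha-1,0}]$ vanish then the class of $\widetilde{\omega}$ in $\widetilde{H}''_{f,\boldsymbol{0}}/t\widetilde{H}''_{f,\boldsymbol{0}}$ is zero. Your appeal to the generalized eigenspace decomposition of the $t\partial_t$-stable lattice is the right lever, but it needs to be made precise: one must identify the $(\alpha-1)$-generalized eigenspace of $\overline{t\partial_t}$ on $\widetilde{H}''_{f,\boldsymbol{0}}/t\widetilde{H}''_{f,\boldsymbol{0}}$ with $(j_*\mathcal{G}_{\alpha-1})_0$ \emph{before} concluding injectivity, rather than the other way around. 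Varchenko does this by first showing that $\widetilde{H}''_{f,\boldsymbol{0}}$ is exactly the $\mathcal{O}_{T,0}$-span of all elementary sections $s_{\alpha-1}[A]$ with $A\in S_{\alpha-1}$, and then reading off the quotient directly. Filling in that identification is the missing step in your sketch.
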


The set of \( b \)-exponents of an isolated singularity is, therefore, after \cref{thm:varchenko}, contained in the set of positive rational numbers of the form \( \alpha \in L(\lambda), \lambda \in \Lambda \).

\subsection{Semicontinuity of the \texorpdfstring{\( b \)}{b}-exponents} \label{sec:semicontinuity}

In \cite{varchenko80}, Varchenko proves the semicontinuity of the \( b \)-exponents under \( \mu \)-constant deformations of the singularity in the case that the eigenvalues of the monodromy endomorphism are pair-wise different. In this section, we will generalize his result to any isolated singularity under the extra assumption of the existence of certain \emph{dual locally constant geometric sections}. First, let us review the results from \cite[\S 11]{varchenko80}.

\jump

Fix \( \lambda \in \Lambda \) an eigenvalue of the monodromy. We have seen in \cref{sec:geometric-sections} that the vector bundles \( H^n_{\lambda, \alpha}, \alpha \in L(\lambda)-1 \) form an increasing filtration in \( H^n_{\lambda} \). Denote by \( d_{\alpha} \) the dimension of the bundle \( H^n_{\lambda, \alpha} \). Then, \( d_{\alpha} \leq d_{\alpha +1} \) and \( d_{\alpha} = \dim_{\mathbb{C}} H^n_{\lambda} \), for \( \alpha \gg 0 \), which is exactly the number of eigenvalues of the monodromy that are equal to \( \lambda \). Then, for the quotient bundles \( F_\alpha \) defined in the previous section, we have that
\[ \sum_{\lambda \in \Lambda} \sum_{\alpha \in L(\lambda)} \dim_{\mathbb{C}} F_{\alpha-1} = \mu, \]
since \( \dim_{\mathbb{C}} F_{\alpha} = d_{\alpha} - d_{\alpha - 1} \). If one assumes that the monodromy has pair-wise different eigenvalues, then \( \dim_{\mathbb{C}} H^n_{\lambda} = 1 \) for all \( \lambda \in \Lambda \). Therefore, there is only a single \( \alpha \in L(\lambda)-1 \) that can be a \( b \)-exponent, and such \( \alpha \) is characterized by the fact that \( \dim_{\mathbb{C}} F_\alpha = 1 \).

\jump

Let \( f_y : (\mathbb{C}^{n+1}, \boldsymbol{0}) \longrightarrow (\mathbb{C}, 0) \), with \( y \in I_\eta := \{z \in \mathbb{C} \ |\ |z| < \eta\}, 0 < \eta \ll 1 \), be a one-parameter \( \mu \)-constant deformation of the isolated singularity \( f =: f_0 \). Recall that along the fibers of a \( \mu \)-constant deformation both the eigenvalues and the Jordan form of the monodromy endomorphism remain constant, \cite{trang-ramanujam76}. Then, if one denotes by \( d_\alpha(y) \) the dimension of the corresponding bundle \( H^n_{\lambda, \alpha}(y) \) of the isolated singularity \( f_y \), we have

\begin{proposition}[{\cite[Cor. 19]{varchenko80}}] \label{prop:semicontinuity-dimension}
The dimension \( d_\alpha(y) \) of \( H^n_{\lambda, \alpha}(y) \) depends lower-semicon\-tinuously on the parameter \( y \).
\end{proposition}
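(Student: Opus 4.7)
The plan is to realize $d_\alpha(y)$ as the rank of a holomorphic family of linear maps parametrized by $y\in I_\eta$, and then invoke the standard fact that the rank of such a family is lower-semicontinuous. In $\mu$-constant deformations, Lê–Ramanujam give us that the monodromy (including its Jordan structure) is constant, so the decomposition $H^n(y) = \bigoplus_{\lambda\in \Lambda} H^n_\lambda(y)$ is a decomposition into subbundles of locally constant rank. After shrinking $I_\eta$ (and choosing compatible $\delta,\epsilon$), one organizes the Milnor fibrations $f'_y : X'_y \to T'$ into a family over $I_\eta \times T'$, and because $I_\eta$ is contractible, the local system underlying $R^n f'_{y,*}\underline{\mathbb{C}}$ can be trivialized in the $y$-direction. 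Fixing a base point $t_0\in T'$, this yields a canonical identification of all fibers $H^n_\lambda(X_{t_0,y}, \mathbb{C})$ with a single finite-dimensional $\mathbb{C}$-vector space $V_\lambda$.

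Next I would exploit the generator description of $H^n_{\lambda,\alpha}$ via locally constant geometric sections. By \cref{lemma:generators-local-system} together with the definition of $S_\alpha$, the subbundle $H^n_{\lambda,\alpha}(y)$ is the $\mathcal{O}_{T'}$-span of the locally constant sections $A^\omega_{\alpha',k}(y,\cdot)$ with $\alpha' \in (L(\lambda)-1)$, $\alpha' \le \alpha$, $0 \le k \le n$, and $\omega$ ranging over $\Gamma(X_y, \Omega^{n+1})$. Because the Brieskorn lattice is a free $\mathcal{O}_{T,0}$-module of rank $\mu$ whose formation is compatible with the $\mu$-constant deformation, one may replace the infinite-dimensional family of forms by a finite holomorphic family $\omega_1(y),\dots,\omega_N(y)$ of representatives of a basis of $H''_{f_y,\boldsymbol{0}}$, and truncate to the finitely many $(\alpha',k)$ that carry nontrivial sections. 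The coefficients $a_{\alpha',k}(\omega_j(y),\gamma)$ defining $A^{\omega_j(y)}_{\alpha',k}(y,t_0)$ are the Mellin-type coefficients in the asymptotic expansion \cref{eq:integral-expansion-total} of the period integrals of the family $\omega_j(y)/\dd f_y$, and standard parameter-dependent asymptotics for regular-singular connections show they depend holomorphically on $y$.

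Define
\[
\Phi_\alpha(y) : \mathbb{C}^N \otimes \mathbb{C}\bigl\{(\alpha',k) : \alpha' \in L(\lambda)-1,\, \alpha' \le \alpha,\, 0 \le k \le n\bigr\} \longrightarrow V_\lambda,
\]
by $(c_j, \alpha', k) \mapsto \sum_j c_j\, A^{\omega_j(y)}_{\alpha',k}(y,t_0)$. Then $\Phi_\alpha(y)$ is a holomorphic family of linear maps between fixed finite-dimensional vector spaces and, by the preceding discussion, $d_\alpha(y) = \operatorname{rank} \Phi_\alpha(y)$. Since the locus $\{y : \operatorname{rank} \Phi_\alpha(y) \ge k\}$ is open for every $k$ (it is cut out by the nonvanishing of some $k\times k$ minor), the rank function is lower-semicontinuous, and the proposition follows.

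The main obstacle is paragraph two: one has to make sense of the locally constant geometric sections $A^{\omega(y)}_{\alpha,k}(y,t)$ as objects that depend holomorphically on $y$, and to know that the span of their values at $t_0$ recovers the full fiber of $H^n_{\lambda,\alpha}(y)$. Both points hinge on a parameter-dependent version of Malgrange's analysis \cite{malgrange74} of the Gauss-Manin connection, which is available precisely because the deformation is $\mu$-constant: the Gauss-Manin connection for the family is regular-singular along $\{t=0\}$ with constant exponents and Jordan structure, and the coefficients of its asymptotic expansion along horizontal sections therefore vary holomorphically in $y$.
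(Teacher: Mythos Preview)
The paper does not give its own proof of this proposition; it is quoted directly from Varchenko \cite[Cor.~19]{varchenko80}. The deformation setup you describe --- organizing the Milnor fibrations into a family $\Phi':\mathcal{X}'\to T'_\delta\times I_\eta$, trivializing the local system in the $y$-direction via L\^e--Ramanujam, and then reading off $d_\alpha(y)$ as the rank of a holomorphic family of linear maps built from the coefficients $a_{\alpha,k}(\omega_j(y),\gamma)$ --- is precisely the framework the paper recalls in the paragraphs immediately after the proposition (following \cite[\S11]{varchenko80}), and is Varchenko's own line of argument. So your approach matches what the paper cites.

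One minor bookkeeping point: in the paper's conventions $H^n_{\lambda,\alpha}$ is generated over $\mathcal{O}_{T'}$ by the sections of $S_\alpha$, i.e.\ by the $A^\omega_{\alpha,k}$ for that \emph{single} $\alpha$; the inclusion $S_{\alpha-1}\subseteq S_\alpha$ is built in via the identity $A^\omega_{\alpha-1,k}=A^{f\omega}_{\alpha,k}$. Your reduction to finitely many forms still goes through, since writing $\omega=\sum_j g_j(f_y)\,\omega_j(y)$ with $g_j\in\mathcal{O}_{T,0}$ expresses $A^\omega_{\alpha,k}$ as a finite $\mathbb{C}$-linear combination of the $A^{\omega_j(y)}_{\alpha-m,k}$, and these vanish once $\alpha-m<-1$ because all exponents in \cref{eq:integral-expansion-total} are $>-1$. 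The ``main obstacle'' you flag --- holomorphic dependence of the asymptotic coefficients on $y$ --- is exactly the content of the lemma in \cite{varchenko80} preceding Cor.~19, and your justification via regularity of the relative Gauss--Manin connection with constant exponents is the right one.
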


From this, and under the assumption that the eigenvalues of the monodromy are pair-wise different, Varchenko \cite[Cor. 21]{varchenko80} deduces a lower-semicontinuity for the roots of the Bernstein-Sato polynomial of \( f \). Since the \( b \)-exponents are the opposites in sign to the roots of \( b_{f, \boldsymbol{0}}(s) \), one has an analogous upper-semicontinuity for the \( b \)-exponents of \( f \). Next, we will construct suitable subbundles of \( H^n_{\lambda, \alpha} \) such that their dimension completely characterizes the existence of \( b \)-exponents even if the eigenvalues of the monodromy are not pair-wise different.

\jump

Let us fix \( \gamma_1(t), \dots, \gamma_\mu(t) \) locally constant cycles forming a basis of generalized eigenvectors of the monodromy endomorphism in the homology of each fiber \( X_t, t \in T' \). Similarly to the sheaf of locally constant sections \( S_\alpha, \alpha \in L(\lambda)-1 \) introduced in \cref{sec:geometric-sections}, define \( S_{\gamma_i}, i = 1, \dots, \mu \) the sheaf of locally constant sections of the bundle \( H^n \) generated by the locally constant geometric sections \( A^{\omega}_{\alpha, k},\, k = 0, \dots, n \) with \( \omega \in \Gamma(X, \Omega^{n+1}_X) \) such that
\begin{equation} \label{eq:duality}
  \langle A^{\omega}_{\alpha, k}(t), \gamma_i(t) \rangle \neq 0 \qquad \textrm{and} \qquad \langle A^{\omega}_{\alpha, k}(t), \gamma_j(t) \rangle = 0,
\end{equation}
with \( \gamma_j(t) \) any other eigenvector of the basis different from \( \gamma_i(t) \). The locally constant sections in \( S_{\gamma_{i}} \) will be called \emph{dual locally constant geometric sections} to \( \gamma_{i}(t) \). Notice that, after \cref{eq:integral-expansion-simple}, if \( \gamma(t) \) is a generalized eigenvector of eigenvalue \( \lambda \), then it is necessary that \( \lambda = \exp{(-2 \pi \imath \alpha)} \) for \( \langle A^\omega_{\alpha, k}(t), \gamma(t) \rangle \) to be non-zero.

\jump

It is not a priori clear that dual locally constant geometric sections should exist. In Sections \ref{sec:dual-geometric} and \ref{sec:generic-bexponents}, we will show that, for irreducible plane curve singularities, dual locally constant geometric sections exist with respect to a certain basis of eigenvectors of the monodromy. As we will see in Sections \ref{sec:dual-geometric} and \ref{sec:generic-bexponents}, the geometric picture behind the duality \eqref{eq:duality} is that two locally constant sections of \( H^n \) with the same \( \alpha \in L(\lambda)-1 \) will be linearly independent because they will be dual to two linearly independent eigenvectors of eigenvalue \( \lambda \). At the same time, these eigenvectors will be linearly independent because they will vanish to different rupture divisors of the minimal resolution of \( f \).

\jump

Let \( \gamma_{\lambda}(t) \) be a generalized eigenvector of the basis \( \gamma_{1}(t), \dots, \gamma_{\mu}(t) \) with eigenvalue \( \lambda \in \Lambda \). We define the holomorphic subbundle \( f^*_{\gamma_\lambda} : H^n_{\gamma_\lambda} \longrightarrow T' \) of the bundle \( H^n_{\lambda} \) as the bundle generated by the locally constant sections in \( S_{\gamma_\lambda} \). These subbundles are also invariant under the covariant derivative associated with the connection \( \nabla^*_{\lambda} \) in \( H^n_{\lambda} \). Consider the vector bundle \( H^n_{\gamma_{\lambda}} \cap H^n_{\lambda, \alpha} \), which will be denoted by \( H^n_{\gamma_{\lambda}, \alpha} \), and which is a subbundle of \( H^n_{\lambda, \alpha} \subset H_{\lambda}^n \). The bundle \( H^n_{\gamma_{\lambda}, \alpha} \) is also invariant by the covariant derivative of the connection \( \nabla^*_{\lambda} \) of \( H^n_{\lambda} \). Notice that \( \dim_{\mathbb{C}} H^n_{\gamma_\lambda} \leq 1 \) and if the monodromy has only one eigenvalue equal to \( \lambda \), then \( H^n_{\lambda} = H^n_{\gamma_\lambda} \).

\jump

Following \cref{sec:brieskorn-lattice}, define the quotient bundles \( H^n_{\gamma_{\lambda}, \alpha} / H^n_{\gamma_{\lambda}, \alpha - 1} \) which will be denoted by \( F_{\gamma_\lambda, \alpha} \). Let \( \mathcal{F}_{\gamma_{\lambda}, \alpha} \) denote the locally free sheaf of sections of \( F_{\gamma_\lambda, \alpha} \), then \( \mathcal{G}_{\gamma_\lambda, \alpha} \) is the subsheaf of \( \mathcal{F}_{\gamma_\lambda, \alpha} \) generated by the image of the elementary sections \( s_\alpha[A] \), with \( A \) a section in \( S_\alpha \cap S_{\gamma_{\lambda}} \), under the projection map
\[ \pi_{\gamma_\lambda, \alpha} : H^n_{\gamma_\lambda, \alpha} \longrightarrow H^n_{\gamma_\lambda, \alpha} / H^n_{\gamma_\lambda, \alpha-1} =: F_{\gamma_{\lambda}, \alpha}. \]
All the quotient bundles and subbundles of \( H^n \) presented so far are related by the following diagram,
\begin{equation*}
  \begin{tikzcd}
    H^n_{\gamma_{\lambda}, \alpha} \arrow[two heads]{d} \arrow[hook]{r} & H^n_{\lambda, \alpha} \arrow[two heads]{d} \arrow[hook]{r} & H^n_{\lambda} \arrow[hook]{r} & H^n  \\
    {F}_{\gamma_{\lambda}, \alpha} \arrow[hook]{r}  & {F}_\alpha \\
    {G}_{\gamma_{\lambda}, \alpha} \arrow[hook]{u} \arrow[hook]{r} & {G}_{\alpha}. \arrow[hook]{u}
  \end{tikzcd}
\end{equation*}
One can check that the subsheaf \( \mathcal{G}_{\gamma_\lambda, \alpha} \) has the same properties as the subsheaf \( \mathcal{G}_\alpha \) described in the previous section.

\jump

Assuming the existence of dual locally constant geometric sections for \( \gamma_{\lambda}(t) \), the important point is that the dimensions of the vector bundles \( H^n_{\gamma_\lambda, \alpha} \subseteq H^n_{\gamma_\lambda}, \alpha \in L(\lambda)-1 \), denoted \( d_{\gamma_\lambda, \alpha} \), are either zero or one. In addition, \( d_{\gamma_\lambda, \alpha} = \dim_{\mathbb{C}} H^n_{\gamma_\lambda} = 1, \) for \( \alpha \gg 0 \). Therefore, this construction allows us to characterize the existence of a certain \( b \)-exponent in terms of the dimensions of \( F_{\gamma_{\lambda}, \alpha} \). A candidate \( b \)-exponent \( \alpha \) for \( \alpha \in L(\lambda) \), associated with the generalized eigenvector \( \gamma_{\lambda} \) of the monodromy, is a \( b \)-exponent, if and only if, \( \dim_{\mathbb{C}} F_{\gamma_{\lambda}, \alpha-1} = d_{\gamma_{\lambda}, \alpha-1} - d_{\gamma_{\lambda}, \alpha - 2} = 1 \).

\jump

As before, let \( f_y :(\mathbb{C}^{n+1}, \boldsymbol{0}) \longrightarrow(\mathbb{C}, 0), y \in I_\eta \) be a one-parameter \( \mu \)-constant deformation of an isolated singularity. Following \cite[\S 11]{varchenko80} and the notations from \cref{sec:introduction}, let \( \mathcal{X} := \{ (x, y) \in B_\epsilon \times I_\eta \ |\ f_y(x) = t \in T_\delta\} \). Denote by \( \Phi : \mathcal{X} \longrightarrow T_\delta \times I_\eta \) the application given by \( (x, y) \mapsto (f_y(x), y) \). Let
\[ \mathcal{X}_{t, y} := \mathcal{X} \cap \Phi^{-1}(t, y) \qquad \textrm{and} \qquad \mathcal{X}' := \mathcal{X} \setminus \Phi^{-1}(\{0\} \times I_\eta), \]
then \( \Phi' : \mathcal{X}' \longrightarrow T'_\delta \times I_\eta \) is a locally trivial fibration with fibers \( \mathcal{X}_{t, y} \) that is independent of \( \epsilon, \delta, \eta \) if \( 0 < \delta, \eta \ll \epsilon \ll 1 \). This fibration is fiber-homotopic to the Milnor fibration of \( f_y \) for fixed \( y \in I_\eta \), see \cite{trang-ramanujam76}. As in \cref{sec:introduction}, this means that the associated (co)homological bundle carry an integrable connection. Furthermore, the (co)homological bundle of the restriction of \( \Phi' \) over \( T'_\delta \times \{y\} \) is canonically isomorphic to the (co)homological Milnor fibration of the singularity of the fiber \( f_y \), see \cite[Cor. 17]{varchenko80}.

\jump

In particular, this means that we can fix a basis \( \gamma_1(t, y), \gamma_2(t, y), \dots, \gamma_\mu(t, y) \) of the homological Milnor fibration of \( f_y \) for \( y \in I_\eta \), given by eigenvectors of the monodromy endomorphism, and they can be extended by parallel transport, to a basis of the homological bundle associated with the locally trivial fibration \( \Phi : \mathcal{X}' \longrightarrow T'_\delta \times I_\eta \).

\jump

After the above discussion, we are in the same situation as in \cite[Cor. 21]{varchenko80}, replacing the distinct eigenvalues of the monodromy by the distinct generalized eigenvectors of the monodromy. Therefore, assuming the existence of dual locally constant geometric sections, there is a single \( b \)-exponent \( \alpha \in L(\lambda) \) associated with each generalized eigenvector of the monodromy. The following proposition then follows from the same argument as in \cite[Cor. 21]{varchenko80}.

\begin{proposition} \label{prop:semicontinuity-bexponent}
Let \( \gamma_{\lambda}(t, y) \) be a generalized eigenvector of the monodromy of eigenvalue \( \lambda \in \Lambda \). Assume that there exist dual locally constant geometric sections to \( \gamma_{\lambda}(t, y) \) for all fibers of the deformation, that is, \( \dim_{\mathbb{C}} H^n_{\gamma_\lambda}(y) = 1 \) for all \( y \in I_\eta \). Then, the \( b \)-exponent associated with \( \gamma_{\lambda}(t, y) \) depends upper-semicontinuously on the parameter \( y \) of the \( \mu \)-constant deformation.
\end{proposition}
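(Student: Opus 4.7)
The plan is to imitate the proof of \cite[Cor. 21]{varchenko80} with the refined filtration $H^n_{\gamma_\lambda, \alpha}(y)$ in place of $H^n_{\lambda, \alpha}(y)$. The hypothesis $\dim_{\mathbb{C}} H^n_{\gamma_\lambda}(y) = 1$ for all $y \in I_\eta$ forces $d_{\gamma_\lambda, \alpha}(y) \in \{0, 1\}$, so by the characterization given at the end of \cref{sec:semicontinuity} the unique $b$-exponent $\alpha(y)$ attached to $\gamma_\lambda(t, y)$ is the smallest $\alpha \in L(\lambda)$ for which $d_{\gamma_\lambda, \alpha - 1}(y) = 1$. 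In this way each generalized eigenvector plays the role that a simple eigenvalue did in Varchenko's original argument, and everything reduces to a semicontinuity statement for the dimensions $d_{\gamma_\lambda, \alpha}(y)$.

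The main step is therefore a refined version of \cref{prop:semicontinuity-dimension}: I would show that $d_{\gamma_\lambda, \alpha}(y)$ is lower-semicontinuous in $y$. First one globalizes the construction of \cref{sec:semicontinuity} over $I_\eta$. The parallel-transported basis $\gamma_1(t, y), \dots, \gamma_\mu(t, y)$ makes the duality condition \eqref{eq:duality} meaningful fiber by fiber, and hence defines the sheaves $S_{\gamma_\lambda}(y)$ and the subbundle $H^n_{\gamma_\lambda, \alpha}(y) = H^n_{\gamma_\lambda}(y) \cap H^n_{\lambda, \alpha}(y)$. The key input is that, for a relative top form $\omega \in \Gamma(\mathcal{X}, \Omega^{n+1}_{\mathcal{X}/I_\eta})$, the coefficients $a_{\alpha, k}(\omega, \gamma_j)(y)$ in the asymptotic expansion \eqref{eq:integral-expansion-total} of the period integral depend holomorphically on $y$; here one uses $\mu$-constancy to ensure that $L(\lambda)$ and the Jordan type remain fixed across the family, so the exponents appearing in the expansions do not jump. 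Following Varchenko, a form whose class at $y = 0$ generates $H^n_{\gamma_\lambda, \alpha}(0)$ can be deformed (within its cohomology class in the Brieskorn family) so that its dual locally constant property with respect to $\gamma_\lambda(t, y)$ persists, yielding $d_{\gamma_\lambda, \alpha}(y) \geq d_{\gamma_\lambda, \alpha}(0)$.

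Once the semicontinuity is in place, the conclusion is immediate: if $\alpha_0 \in L(\lambda)$ is the $b$-exponent of $f_0$ associated with $\gamma_\lambda(t, 0)$, then $d_{\gamma_\lambda, \alpha_0 - 1}(0) = 1$; by semicontinuity $d_{\gamma_\lambda, \alpha_0 - 1}(y) \geq 1$, and the upper bound $\dim_{\mathbb{C}} H^n_{\gamma_\lambda}(y) = 1$ forces equality. Thus $\alpha(y) \leq \alpha_0$, which is the desired upper semicontinuity. The delicate point that I expect to require the most care is the perturbation argument just above: one must ensure that a representative $\omega(y)$ can be chosen so that the non-vanishing $\langle A^{\omega(y)}_{\alpha, 0}(t, y), \gamma_\lambda(t, y)\rangle \neq 0$ is preserved while simultaneously the vanishing $\langle A^{\omega(y)}_{\alpha, 0}(t, y), \gamma_j(t, y)\rangle = 0$ is maintained for $j$ indexing the remaining basis vectors. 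The first is automatic by the continuity of a non-zero holomorphic pairing; the second is a finite collection of linear conditions on $\omega(y)$ whose rank behavior is precisely what Varchenko's deformation argument controls, so his method applies once $\mu$-constancy guarantees the stability of the ambient data.
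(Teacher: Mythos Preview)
Your proposal is correct and follows essentially the same approach as the paper, which does not give a detailed proof but simply asserts that ``the following proposition then follows from the same argument as in \cite[Cor.~21]{varchenko80}'' once the filtration $H^n_{\lambda,\alpha}$ has been refined to $H^n_{\gamma_\lambda,\alpha}$. Your sketch in fact supplies more detail than the paper does: you correctly isolate the reduction to a lower-semicontinuity statement for $d_{\gamma_\lambda,\alpha}(y)$ and correctly flag the one place requiring care, namely that the vanishing conditions in \eqref{eq:duality} must be maintained under perturbation, which is exactly the step absorbed into the appeal to Varchenko's deformation argument.
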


\subsection{Resolution of singularities} \label{sec:resolution-sing}

In this section, we show how to use a resolution of singularities of \( f \) to study the integrals of relative differential forms along vanishing cycles following the ideas of Varchenko in \cite[\S 4]{varchenko82}.

\jump

Let \( \pi : \overbar{X} \longrightarrow X \) be a resolution of singularities of \( f \). This means that \( \overbar{X} \) is a smooth complex analytic manifold and \( \pi \) is a proper birational map that is an isomorphism outside the singular locus of \( f \). The total transform divisor and the relative canonical divisor are simple normal crossing divisors with the following expressions
\begin{equation} \label{eq:eq123}
F_\pi := \textnormal{Div}(\pi^* f) = \sum_{i=1}^r N_i E_i + \sum_{i=r+1}^s S_i, \qquad K_\pi := \textnormal{Div}(\det\textnormal{Jac}(\pi)) = \sum_{i=1}^r k_i E_i,
\end{equation}
where \( E_i \) and \( S_i \) are, respectively, the irreducible components of the exceptional divisor \( E \) and of the strict transform of \( f \). For reasons that will become clear later, it is convenient to pass from the resolution manifold \( \overbar{X} \) to another space where the normal crossings of the exceptional divisor \( E \) are preserved but \( F_\pi \) is \emph{reduced}. This is indeed possible if we relax the smoothness conditions of \( \overbar{X} \). This process is called \emph{semi-stable reduction} and the reader is referred to \cite[\S 2]{steenbrink77} for the details.

\jump

Let \( e \) be a positive integer such that the \( e \)--th power of the monodromy is unipotent. By A'Campo's description of the monodromy in terms of a resolution of \( f \) given in \cite{acampo75}, we can take \( e = \lcm(N_1, N_2, \dots, N_r) \). Define
\begin{equation*}
\widetilde{T} := \left\{\, \tilde{t} \in \mathbb{C} \ |\ |\tilde{t}| < \delta^{1/e}\, \right\},
\end{equation*}
and let \( \sigma : \widetilde{T} \longrightarrow T \) be given by \( \sigma(\tilde{t}) = \tilde{t}^e  \). Denote by \( \widetilde{X} \) the normalization of the fiber product \( \overbar{X} \times_T \widetilde{T} \) and by \( \nu : \widetilde{X} \longrightarrow \overbar{X} \times_T \widetilde{T} \) the normalization morphism. Let \( \rho : \widetilde{X} \longrightarrow \overbar{X} \) and \( \tilde{f} : \widetilde{X} \longrightarrow \widetilde{T} \) be the natural maps. Finally, denote \( \widetilde{X}_{\tilde{t}} := \tilde{f}^{-1}(\tilde{t}) \) and \( \widetilde{D} := \rho^*(D) \) for any divisor \( D \) on \( \overbar{X} \). We have the following commutative diagram
\begin{equation*}
  \begin{tikzcd}
    \widetilde{X} \arrow{d}{\tilde{f}} \arrow{r}{\rho} & \overbar{X} \arrow{d}{\pi^* f} \arrow{r}{\pi} & X \arrow{d}{f} \\
    \widetilde{T} \arrow{r}{\sigma} & T \arrow[r, equal] & T.
  \end{tikzcd}
\end{equation*}

\jump

An orbifold of dimension \( n + 1 \) is a complex analytic space which admits an open covering \( \{U_i\}  \) such that each \( U_i \) is analytically isomorphic to \( Z_i/G_i \) where \( Z_i \subset \mathbb{C}^{n+1}  \) is an open ball and \( G_i \) is finite subgroup of \( GL(n+1, \mathbb{C}) \). Similarly, a divisor \( D \) on an orbifold \( \widetilde{X} \) is an orbifold normal crossing divisor if locally \( ( \widetilde{X}, \widetilde{D}) = (Z, F)/G \) with \( Z \subset \mathbb{C}^{n+1}\) an open domain, \( G \subset GL(n+1, \mathbb{C}) \) a finite subgroup acting on \( Z \) and \( F \subset Z \) a \( G \)--invariant divisor with normal crossings. The singularities of an orbifold are concentrated in codimension at least two.

\begin{lemma}[{\cite[Lemma 2.2]{steenbrink77}}]
\( \widetilde{X} \) is an orbifold and the divisor of \( (\pi \rho)^*f \) is a reduced divisor with orbifold normal crossings.
\end{lemma}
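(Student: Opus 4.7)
The statement is local on \( \overline{X} \), so I fix a point \( p \in \overline{X} \) and work in a small neighborhood. Since \( F_\pi \) is a simple normal crossings divisor, I choose local coordinates \( (x_0, \ldots, x_n) \) centered at \( p \) so that the components of \( F_\pi \) through \( p \) are the hyperplanes \( \{x_i = 0\} \) for \( i = 0, \ldots, k \). After absorbing a holomorphic unit into one of the coordinates, we may assume
\[ \pi^* f = x_0^{N_0} \cdots x_k^{N_k}, \]
where each \( N_i \) is a positive integer dividing \( e = \lcm(N_1, \ldots, N_r) \). Consequently, the fiber product \( \overline{X} \times_T \widetilde{T} \) is locally cut out by the single equation \( \tilde{t}^e = x_0^{N_0} \cdots x_k^{N_k} \) in a neighborhood of the origin in \( \mathbb{C}^{n+1} \times \widetilde{T} \).

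The central construction is an explicit smooth cover of the normalization. I set \( a_i := e/N_i \in \mathbb{Z}_{>0} \) and introduce the smooth space \( Z = \mathbb{C}^{n+1} \) with coordinates \( (y_0, \ldots, y_n) \) together with the map
\[ \psi : Z \longrightarrow \overline{X} \times_T \widetilde{T}, \qquad (y_0, \ldots, y_n) \longmapsto \bigl( y_0^{a_0}, \ldots, y_k^{a_k}, y_{k+1}, \ldots, y_n, \; y_0 y_1 \cdots y_k \bigr). \]
A direct computation yields \( \psi^*(\tilde{t}^e) = \prod_{i=0}^{k} y_i^e = \psi^*\bigl(\prod x_i^{N_i}\bigr) \), so \( \psi \) factors through the fiber product and is finite and dominant onto one of its irreducible components.

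The Galois group of \( \psi \) is the finite abelian group
\[ G = \Bigl\{\, (\zeta_0, \ldots, \zeta_k) \in \mu_{a_0} \times \cdots \times \mu_{a_k} \; \Big| \; \zeta_0 \zeta_1 \cdots \zeta_k = 1 \,\Bigr\}, \]
acting on \( Z \) by \( y_i \mapsto \zeta_i y_i \) for \( i \leq k \) and trivially on the remaining coordinates. Since \( G \) is a finite subgroup of \( GL(n+1, \mathbb{C}) \), the map \( \psi \) identifies an open subset of the normalization \( \widetilde{X} \) with the orbifold quotient \( Z/G \), establishing the orbifold structure. When \( \overline{X} \times_T \widetilde{T} \) is reducible, the different irreducible components are covered by charts of this form obtained by multiplying \( \tilde{t} \) by suitable roots of unity. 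For the divisor claim, the zero locus of \( (\pi\rho)^* f \) pulls back via \( \psi \) to the divisor of \( (y_0 \cdots y_k)^e \) on \( Z \), whose reduced support is the simple normal crossing divisor \( \sum_{i=0}^k \{y_i = 0\} \); this support is \( G \)-invariant and descends to a reduced orbifold normal crossing divisor on \( \widetilde{X} \).

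The main obstacle is the identification in the previous paragraph: one must verify that \( \psi \) (together with its variants obtained by roots of unity) exhibits exactly the normalization, rather than a further finite cover or a quotient of it. This requires careful bookkeeping of roots of unity, particularly when \( d := \gcd(N_0, \ldots, N_k) > 1 \), in which case the fiber product factors as \( \prod_{\zeta^d = 1} (\tilde{t}^{e/d} - \zeta \prod_i x_i^{N_i/d}) \) and one must normalize each of the \( d \) irreducible components separately before patching the charts into a single orbifold structure on \( \widetilde{X} \).
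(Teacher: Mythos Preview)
Your approach is essentially the same as the one the paper (following Steenbrink) takes: the paper does not give a self-contained proof of this lemma but cites it from \cite{steenbrink77}, and the paragraph immediately following the lemma describes the local orbifold charts in exactly the way you construct them. Your map \( \psi \), your group \( G = \{(\zeta_0,\dots,\zeta_k)\in \mu_{a_0}\times\cdots\times\mu_{a_k}\mid \zeta_0\cdots\zeta_k=1\} \), and your handling of the \( d = \gcd(N_0,\dots,N_k) \) components match the paper's description (where your \( G \) is the paper's \( G' \), and the \( d \) components are the paper's \( U_1,\dots,U_d \) indexed by \( \tilde t = \tau\exp(2\pi\imath j/d) \)).
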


\jump

In terms of the local coordinates of \( \overbar{X} \), the orbifold \( \widetilde{X} \) is presented as follows. Fix \( U \) an affine coordinate chart of \( \overbar{X} \) with coordinates \( x_0, \dots, x_n \), for which there are non-negative integers \( k \) and \( N_{i_0}, \dots, N_{i_k} \) such that \( (\pi^*{f})(x_0, \dots, x_n) = x_0^{N_{i_0}} \cdots x_{k}^{N_{i_k}} \). Then, on the open neighborhood \( U \times_{T} \widetilde{T} \) in \( \overbar{X} \times_{T} \widetilde{T} \), we have \( x_0^{N_{i_0}} \cdots x_k^{N_{i_k}} = \tilde{t}^e \). Set \( d := \gcd(N_{i_0}, \dots, N_{i_k}) \), the preimage of \( U \times_{T} \widetilde{T} \) in \( \widetilde{X} \) consists of \( d \) disjoint open sets which we denote by \( U_1, U_2, \dots, U_d \). On one of these subsets \( U_j \), there are coordinates \( y_0, \dots, y_k, \tau \) related by \( \tau = y_0 \cdots y_k \). The map \( \restr{\rho}{U_j} : U_j \longrightarrow U \times_{T} \widetilde{T} \) is given by \( \tilde{t} = \tau \exp(2 \pi \imath j/d) \) and \( x_i = y_i^{e/N_i} \) if \( 0 \leq i \leq k \) and \( x_i = y_i \) if \( i > k \).

\jump

Let \( G = \mathbb{Z}/(e/N_{i_0}) \times \cdots \times \mathbb{Z}/(e/N_{i_k}) \) be the group that acts on \( \mathbb{C}\{y_0, \dots, y_k\} \) according to the following rules
\begin{equation*}
(a_0, \dots, a_k) \cdot y_i = \begin{cases}
         \exp(2 \pi \imath a_j N_j / e) \cdot y_j  & \textnormal{if} \ 0 \leq j \leq k,\\
          y_j                              & \textnormal{if} \ j > k.
\end{cases}
\end{equation*}
Let \( G' := \{g \in G \ |\ g \tau = \tau\} \subset GL(n+1, \mathbb{C}) \). Then, the holomorphic functions and differential forms on \( U_j \) are the usual ones in terms of the coordinates \( y_0, \dots, y_n \) subject to the condition that they must be invariant under \( G' \), i.e. \( g \cdot (y_0 \cdots y_k) = y_0 \cdots y_k \). In this context, differential calculus is completely analogous to the usual one on manifolds.

\jump

Assume that \( \omega \in \Gamma(X, \Omega^{n+1}_X) \) is a top holomorphic form. Let \( v_i(\omega) \) be the order of vanishing of \( \pi^*\omega \) along the exceptional component \( E_i \), then the order of vanishing \( \tilde{v}_i(\omega) \) of \((\pi \rho)^* \omega \) along \( \widetilde{E}_i \) is \( e(v_i(\omega) + 1)/N_i - 1 \), see \cite[Lemma 4.4]{varchenko82}. Clearly after \cref{eq:eq123}, \( v_i(\omega) = k_i \) if \( \omega = \dd x_0 \wedge \cdots \wedge \dd x_n \), and thus \( \tilde{v}_i(\omega) = e(k_i + 1)/N_i - 1 \). Now, take \( \widetilde{\omega} \) a section of \( \Omega^{n+1}_{\widetilde{X}} \), since locally \( \tilde{f}(y_0, \dots, y_n) = y_0 \cdots y_k \), the relative form \( \widetilde{\omega} / \dd \tilde{f} \) is well-defined on
\begin{equation*}
\widetilde{F}_\pi^\circ := \bigcup_{i = 1}^r \widetilde{D}_i^\circ \quad \textnormal{where} \quad \widetilde{D}_i^\circ := \widetilde{D}_i \setminus \bigcup_{j \neq i} (\widetilde{D}_i \cap \widetilde{D}_j) \quad \textnormal{with} \quad D_i, D_j \in \textnormal{Supp}(F_\pi).
\end{equation*}

The following lemma is easy to establish.

\begin{lemma}[{\cite[Lemma 4.3]{varchenko82}}] \label{lemma:varchenko}
If \( \pi \rho \) also denotes the restriction to \( \widetilde{X}_{\tilde{t}}, \tilde{t} \in \widetilde{T}' \), of the map \( \pi \rho : \widetilde{X} \longrightarrow X \), then for all \(\tilde{t} \in \widetilde{T}' := \sigma^{-1}(T')\),
\begin{equation*}
\restr{\frac{(\pi \rho)^*(\omega)}{\dd \tilde{f}}}{\widetilde{X}_{\tilde{t}}} = e \tilde{f}^{e-1} (\pi \rho)^*\bigg( \restr{\frac{\omega}{\dd f}}{X_{\tilde{t}^{e}}} \bigg).
\end{equation*}
\end{lemma}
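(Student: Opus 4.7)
The plan is to reduce the identity to a direct chain-rule computation, using only the relation $f\circ \pi\rho = \tilde{f}^{\,e}$ coming from the commutative diagram, together with the definition of the Gelfand–Leray form given in \eqref{eq:divide_df}. Throughout I would work on the open locus $\widetilde{F}_\pi^{\,\circ}$ where the relative form $\widetilde{\omega}/\dd\tilde{f}$ is defined; equivalently, in the orbifold local coordinates $(y_0,\dots,y_n)$ with $\tilde f = y_0 \cdots y_k$, I would stay on the smooth points $\widetilde{D}_i^{\,\circ}$ of a single component, on which one of the $y_j$'s can be used as a transverse coordinate and the group $G'$ action does not create any extra issues.

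The first step is to observe that, since $\sigma(\tilde t)=\tilde t^{\,e}$ and $(\pi\rho)$ fits into the commutative diagram over $\sigma$, one has $f\circ(\pi\rho)=\tilde f^{\,e}$ as holomorphic functions on $\widetilde X$. Taking exterior derivatives gives the crucial pull-back identity
\[
(\pi\rho)^{*}(\dd f)\;=\;e\,\tilde f^{\,e-1}\,\dd\tilde f.
\]
This is the only place where the ramification order $e$ enters, and it is where the factor $e\,\tilde f^{\,e-1}$ on the right-hand side of the lemma will come from.

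The second step is to pick, locally on $X$ away from the critical locus of $f$, a holomorphic $n$-form $\xi$ such that $\omega=\dd f\wedge \xi$, so that by definition $\omega/\dd f=\xi|_{X_t}$. Pulling back along $\pi\rho$ and using the identity above,
\[
(\pi\rho)^{*}\omega\;=\;(\pi\rho)^{*}(\dd f)\wedge(\pi\rho)^{*}\xi\;=\;\dd\tilde f\wedge\bigl(e\,\tilde f^{\,e-1}(\pi\rho)^{*}\xi\bigr).
\]
By the defining property of the Gelfand–Leray form applied on $\widetilde X$, this exhibits the form $e\,\tilde f^{\,e-1}(\pi\rho)^{*}\xi$ as a representative of $(\pi\rho)^{*}\omega/\dd\tilde f$. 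Restricting to the fiber $\widetilde X_{\tilde t}$ and using that $(\pi\rho)(\widetilde X_{\tilde t})\subseteq X_{\tilde t^{\,e}}$, the restriction of $(\pi\rho)^{*}\xi$ to $\widetilde X_{\tilde t}$ equals $(\pi\rho)^{*}(\xi|_{X_{\tilde t^{\,e}}})=(\pi\rho)^{*}(\omega/\dd f|_{X_{\tilde t^{\,e}}})$, which yields precisely the claimed formula.

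The last step, which I expect to be the only real subtlety, is to check that these manipulations are well-defined on the orbifold $\widetilde X$ rather than on a genuine manifold. This would be handled by verifying the identity in the local $G'$-invariant charts described just before the lemma: in the coordinates $(y_0,\dots,y_n)$ with $\tilde f=y_0\cdots y_k$ and $x_i=y_i^{e/N_i}$, the one-form $\dd f=\dd(y_0^{N_{i_0}}\cdots y_k^{N_{i_k}})$ satisfies the chain-rule identity above directly, and the Gelfand–Leray forms on both sides are manifestly $G'$-invariant (they descend from the ambient forms $\omega$ and $(\pi\rho)^{*}\omega/\dd\tilde f$). Independence of the choice of $\xi$ is automatic because any two choices differ by a form annihilated by $\dd f$ on $X_{\tilde t^{\,e}}$, whose pull-back is annihilated by $\dd\tilde f$ on $\widetilde X_{\tilde t}$. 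The main potential obstacle is thus purely bookkeeping on the orbifold charts; once that is settled, the statement is an immediate consequence of $f\circ\pi\rho=\tilde f^{\,e}$.
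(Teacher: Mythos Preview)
Your argument is correct and is exactly the natural chain-rule computation one would expect. The paper itself does not give a proof of this lemma: it merely remarks that ``the following lemma is easy to establish'' and cites \cite[Lemma~4.3]{varchenko82}, so there is nothing to compare against beyond noting that your approach is the standard one. One small remark: your concern about orbifold charts in the last step is largely unnecessary here, since for $\tilde t\in\widetilde T'$ the fiber $\widetilde X_{\tilde t}$ lies entirely in the smooth locus of $\widetilde X$ (the orbifold singularities are concentrated on the central fiber), so the computation takes place on an honest manifold.
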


\subsection{The asymptotic expansion of the periods} \label{sec:filtrations}

In this section, we will construct the asymptotic expansion of the integrals in \cref{eq:integral-expansion-total} from a resolution of singularities of \( f \). We will use the same notations from the previous section. In the sequel, we will always fix an exceptional component \( E_i \) of a resolution of the germ \( f : (\mathbb{C}^{n+1}, \boldsymbol{0}) \longrightarrow (\mathbb{C}, 0) \).

\jump

Let \( \omega \in \Gamma(X, \Omega^{n+1}_X)  \) be a holomorphic differential form of maximal degree on \( X \).
Let \( \overbar{X}_i \subset \overbar{X} \) be an open neighborhood of the divisor \( E_i \) and define \( \overbar{X}_i^\circ := \overbar{X}_i \cap (\overbar{X} \setminus F_\pi) \). If locally on \( \overbar{X}_i \) the exceptional divisor \( E_i \) has local equation \( x_0 \), then one can locally decompose \( \overline{\omega} := \pi^*\omega \) in the following way
\[ \overline{\omega} = \overline{\omega}_0 + \overline{\omega}_1 + \cdots + \overline{\omega}_\nu + \cdots, \quad \nu \in \mathbb{Z}_{+}, \]
where \( \overline{\omega}_\nu \) is a local section of \( \Omega^{n+1}_{\overbar{X}}(-\nu E_i) \) and \( x_0^{-v_i(\overline{\omega}_\nu)} \overline{\omega}_\nu \) extends holomorphically to \( \overbar{X}_i \), by expanding the local defining equation of \( \overline{\omega} \) as a series in \( x_0 \). Let us see that these local sections patch up to a section over \( \overbar{X}_i \).
\begin{lemma}
For \( \nu \in \mathbb{Z}_{+} \), the local forms \( \overline{\omega}_{\nu} \) define an element of \( \Gamma\big(\overbar{X}_i, \Omega_{\overbar{X}}^{n+1}(-\nu E_i)\big) \).
\end{lemma}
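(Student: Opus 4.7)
Plan: The statement is essentially a patching problem: on each chart the decomposition is a standard Taylor expansion in the transverse coordinate, and the task is to verify global consistency. The local step is easy; the content lies in understanding how the homogeneous parts behave under change of a transverse coordinate, and then using that behavior to assemble a global section.

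First I would cover \( \overbar{X}_i \) by coordinate charts \( \{U_{\alpha}\} \) on which \( E_i \cap U_{\alpha} = \{x_0^{(\alpha)}=0\} \), with \( (x_0^{(\alpha)},x_1^{(\alpha)},\dots,x_n^{(\alpha)}) \) a local coordinate system. Writing \( \overline{\omega}|_{U_\alpha} = g^{(\alpha)}\,dx_0^{(\alpha)}\wedge\cdots\wedge dx_n^{(\alpha)} \) and Taylor-expanding \( g^{(\alpha)} = \sum_{\nu\geq 0} g_\nu^{(\alpha)}(x_1^{(\alpha)},\dots,x_n^{(\alpha)})\,(x_0^{(\alpha)})^{\nu} \), I set
\[ \overline{\omega}_\nu^{(\alpha)} := g_\nu^{(\alpha)}\,(x_0^{(\alpha)})^{\nu}\,dx_0^{(\alpha)}\wedge\cdots\wedge dx_n^{(\alpha)}. \]
Each \( \overline{\omega}_\nu^{(\alpha)} \) is holomorphic on \( U_\alpha \), vanishes to order \( \geq \nu \) along \( E_i \), and hence defines a section of \( \Omega^{n+1}_{\overbar{X}}(-\nu E_i)|_{U_\alpha} \); moreover the \( (x_0^{(\alpha)})^{-\nu}\overline{\omega}_\nu^{(\alpha)} \) are holomorphic in the chart.

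The key step is the compatibility on overlaps \( U_\alpha\cap U_\beta \). There \( x_0^{(\beta)} = u\cdot x_0^{(\alpha)} \) for some unit \( u \in \mathcal{O}^{\times}_{\overbar{X}} \), while the remaining coordinates are related by \( x_j^{(\beta)}=\varphi_j(x_0^{(\alpha)},x_1^{(\alpha)},\dots,x_n^{(\alpha)}) \), \( j\geq 1 \). Substituting these expressions into \( g^{(\beta)}\,dx_0^{(\beta)}\wedge\cdots\wedge dx_n^{(\beta)} \) and expanding the resulting unit (namely \( u \), the Jacobian determinant \( \det(\partial\varphi_j/\partial x_i^{(\alpha)}) \), and the composition \( g_\nu^{(\beta)}(\varphi_1,\dots,\varphi_n) \)) in powers of \( x_0^{(\alpha)} \), a direct calculation shows
\[ \overline{\omega}_\nu^{(\alpha)} - \overline{\omega}_\nu^{(\beta)} \in \Gamma\bigl(U_\alpha\cap U_\beta,\,\Omega^{n+1}_{\overbar{X}}(-(\nu+1)E_i)\bigr), \]
i.e.\ the two local decompositions agree modulo the next step of the \( \mathcal{I} \)-adic filtration \( \mathcal{I} := \mathcal{O}_{\overbar{X}}(-E_i) \). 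This expresses the coordinate-invariance of the \( \nu \)-th jet of \( \overline{\omega} \) along \( E_i \); it is the main technical obstacle, but it is a routine change-of-variables computation once one carefully tracks which factors vanish on \( E_i \).

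Finally I would produce the global sections by induction on \( \nu \). Shrinking \( \overbar{X}_i \) to a Stein neighborhood of \( E_i \), Cartan's Theorem B gives \( H^1\bigl(\overbar{X}_i,\Omega^{n+1}_{\overbar{X}}(-(\nu+1)E_i)\bigr)=0 \). Assuming \( \overline{\omega}_0,\dots,\overline{\omega}_{\nu-1} \in \Gamma\bigl(\overbar{X}_i,\Omega^{n+1}_{\overbar{X}}(-j E_i)\bigr) \) have been constructed compatibly with the local Taylor expansions, the form \( \overline{\omega}-\sum_{j<\nu}\overline{\omega}_j \) is a global section of \( \Omega^{n+1}_{\overbar{X}}(-\nu E_i) \); by the overlap computation above, its image in \( \Omega^{n+1}_{\overbar{X}}(-\nu E_i)/\Omega^{n+1}_{\overbar{X}}(-(\nu+1)E_i) \) is represented consistently by the local forms \( \overline{\omega}_\nu^{(\alpha)} \). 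The short exact sequence
\[ 0 \longrightarrow \Omega^{n+1}_{\overbar{X}}(-(\nu+1)E_i) \longrightarrow \Omega^{n+1}_{\overbar{X}}(-\nu E_i) \longrightarrow \Omega^{n+1}_{\overbar{X}}(-\nu E_i)\big|_{E_i} \longrightarrow 0, \]
combined with the vanishing of \( H^1 \), lifts this class to a global \( \overline{\omega}_\nu \in \Gamma\bigl(\overbar{X}_i,\Omega^{n+1}_{\overbar{X}}(-\nu E_i)\bigr) \) that coincides on each \( U_\alpha \) with \( \overline{\omega}_\nu^{(\alpha)} \) modulo \( \Omega^{n+1}_{\overbar{X}}(-(\nu+1)E_i) \). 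Iterating, the local Taylor pieces assemble into the claimed global sections.
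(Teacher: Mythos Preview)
Your approach is genuinely different from the paper's, and in one respect it proves less than what the lemma (as the paper reads it) asserts.

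The paper argues that the local Taylor pieces \( \overline{\omega}_{\nu,\alpha} \) and \( \overline{\omega}_{\nu,\beta} \) agree \emph{exactly} on \( U_\alpha\cap U_\beta \), not merely modulo \( \Omega^{n+1}_{\overbar{X}}(-(\nu+1)E_i) \). The key observation there is that, with the coordinate choices made (the \( x_1,\dots,x_n \) and \( y_1,\dots,y_n \) are taken to be coordinates on \( E_i \), and \( x_0,y_0 \) are local equations for \( E_i \)), the transition unit \( u_{\alpha,\beta} \) in \( x_0 = u_{\alpha,\beta}\,y_0 \) does \emph{not} depend on the transverse variable \( y_0 \). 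Granting this, a direct chain-rule computation from the global identity \( \overline{\omega}|_{U_\alpha}=\overline{\omega}|_{U_\beta} \) gives
\[
h_{\beta,\nu}(y_1,\dots,y_n)=u_{\alpha,\beta}^{\,v_i(\omega)+\nu}\,h_{\alpha,\nu}(\varphi_{\alpha,\beta})\,\Phi_{\alpha,\beta}
\]
on the nose, so the local pieces patch to a single canonical global section with no cohomological input.

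Your argument, by contrast, only establishes \( \overline{\omega}_\nu^{(\alpha)}-\overline{\omega}_\nu^{(\beta)}\in\Omega^{n+1}_{\overbar{X}}(-(\nu+1)E_i) \) and then manufactures a global lift. Two remarks. First, the appeal to Cartan~B is superfluous: once \( \overline{\omega}_0,\dots,\overline{\omega}_{\nu-1} \) are constructed, the form \( \overline{\omega}-\sum_{j<\nu}\overline{\omega}_j \) is already a global section of \( \Omega^{n+1}_{\overbar{X}}(-\nu E_i) \) hitting the desired class in the quotient, so surjectivity onto \( \Gamma(E_i,\Omega^{n+1}_{\overbar{X}}(-\nu E_i)|_{E_i}) \) is automatic. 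Second, and more to the point, the global \( \overline{\omega}_\nu \) you produce is \emph{not} the gluing of the local \( \overline{\omega}_\nu^{(\alpha)} \); it only agrees with them modulo \( \Omega^{n+1}_{\overbar{X}}(-(\nu+1)E_i) \). So you have not shown that ``the local forms \( \overline{\omega}_\nu \) define an element of \( \Gamma(\overbar{X}_i,\Omega^{n+1}_{\overbar{X}}(-\nu E_i)) \)'' in the sense of literally patching; you have shown that their common image in \( \Omega^{n+1}_{\overbar{X}}(-\nu E_i)\big|_{E_i} \) is globally well defined and admits some (non-canonical) lift. For the downstream use in the paper---the multivalued forms \( R_{i,\nu}(\omega) \) live on \( E_i^\circ \) and depend only on that image---your weaker statement would in fact suffice, but it is not what the text proves.
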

\begin{proof}
Let \( \overline{\omega}_{\nu, \alpha} \) and \( \overline{\omega}_{\nu, \beta} \) be two holomorphic forms constructed as above and defined on the open sets \( U_{\alpha}, U_{\beta} \subset \overbar{X}_i \). We must check that these local sections agree on the intersection \( U_\alpha \cap U_\beta \).

\jump

Let \( x_0 \) and \( y_0 \) be local equations of \( E_i \) on \( U_\alpha \) and \( U_\beta \), respectively. Since \( E_i \) is a Cartier divisor, on \( U_\alpha \cap U_\beta \) it must hold that \( x_0 = u_{\alpha, \beta} y_0 \) for some \( u_{\alpha, \beta} \in \Gamma(U_\alpha \cap U_\beta, \mathcal{O}^*_{\overbar{X}}) \). Consider \( x_1, \dots, x_n \) and \( y_1, \dots, y_n \) local coordinates on \( U_\alpha \cap E_i \) and \( U_\beta \cap E_i \), respectively. Finally, let \( \varphi_{\alpha, \beta} \) and \( \varphi_{\beta, \alpha} \) the corresponding coordinate transformations on \( U_\alpha \cap U_\beta \cap E_i \).

\jump

The first observation is that, as a function of \( y_0, \dots, y_n \), \( u_{\alpha, \beta} \) does not depend on the variable \( y_0 \). Indeed, it holds that \( y_0 = u_{\beta, \alpha} x_0 \) with \( u_{\beta, \alpha} \in \Gamma(U_\alpha \cap U_\beta, \mathcal{O}^*_{\overbar{X}}) \) and \( u_{\alpha, \beta} u_{\beta, \alpha} = 1 \). Then from \( x_0 = u_{\alpha, \beta} y_0 \), the only way that the functional inverse of \( y_0 \) is equal to \( u_{\alpha, \beta}^{-1} x_0 \) is that \( u_{\alpha, \beta} \) does not depend on \( y_0 \).

\jump

Assume that in these local coordinates,
\[ \restr{\overline{\omega}}{U_\alpha} = x_0^{v_i(\omega)} h_{\alpha}(x_0, \dots, x_n) \dd \underline{x} \quad \textnormal{and} \quad \restr{\overline{\omega}}{U_\beta} = y_0^{v_i(\omega)} h_{\beta}(y_0, \dots, y_n) \dd \underline{y}. \]
By construction, we will then have that
\[ \overline{\omega}_{\nu, \alpha} = x_0^{v_i(\omega) + \nu} h_{\alpha,\nu}(x_1, \dots, x_n) \dd \underline{x} \quad \textnormal{and} \quad \overline{\omega}_{\nu, \beta} = y_0^{v_i(\omega)+\nu} h_{\beta, \nu}(y_1, \dots, y_n) \dd \underline{y}, \]
with
\begin{equation} \label{eq:lemma-section-eq3}
  h_{\beta, \nu}(y_1, \dots, y_n) = \frac{1}{\nu!} \left(\frac{\partial^\nu}{\partial y_0^\nu}\cdot h_\beta(y_0, \dots, y_n)\right)\bigg|_{y_0=0} \quad \textnormal{on} \quad U_\beta,
\end{equation}
and a similar expression holds for \( h_{\alpha, \nu} \) on \( U_{\alpha} \). To see that \( \restr{\overline{\omega}_{\nu, \alpha}}{U_\alpha \cap U_\beta} = \restr{\overline{\omega}_{\nu, \beta}}{U_\alpha \cap U_\beta} \), it is enough to show that
\begin{equation} \label{eq:lemma-section-eq1}
  h_{\beta, \nu}(y_1, \dots, y_n) = u_{\alpha, \beta}^{v_i(\omega) + \nu} h_{\alpha, \nu}(\varphi_{\alpha, \beta})\, \Phi_{\alpha, \beta} \quad \textnormal{on} \quad U_\alpha \cap U_\beta,
\end{equation}
where \( \dd \underline{x} = \Phi_{\alpha, \beta} \dd \underline{y} \) and we have that \( \Phi_{\alpha, \beta} = u_{\alpha, \beta}\, \textnormal{det}\, \textnormal{Jac}(\varphi_{\alpha, \beta}) \) does not depend on \( y_0 \). Since \( \overline{\omega} \) is a global section on \( \overbar{X} \), one has that \( \restr{\overline{\omega}}{U_\alpha} = \restr{\overline{\omega}}{U_\beta} \) on \( U_\alpha \cap U_\beta\). Hence,
\begin{equation} \label{eq:lemma-section-eq2}
h_{\beta}(y_0, \dots, y_n) = u_{\alpha, \beta}^{v_i(\omega)} h_{\alpha}(u_{\alpha, \beta} y_0, \varphi_{\alpha, \beta})\, \Phi_{\alpha, \beta} \quad \textnormal{on} \quad U_\alpha \cap U_\beta.
\end{equation}
Finally, to show that \eqref{eq:lemma-section-eq1} holds, it is enough to substitute \cref{eq:lemma-section-eq2} in \eqref{eq:lemma-section-eq3} and apply the chain rule together with the fact that \( u_{\alpha, \beta} \) does not depend on \( y_0 \).
\end{proof}

We will call \( \overline{\omega}_\nu \) the \( \nu \)--th piece of \( \omega \) associated with the divisor \( E_i \). For the ease of notation, we will omit the dependence of the pieces \( \overline{\omega}_{\nu} \) on the index \( i \) as we will always work with a fixed divisor \( E_i \).

\jump

Set \( \widetilde{X}^\circ_i := \nu^{-1}\big(\overbar{X}_i^\circ \times_{T} \widetilde{T}\big) \subset \widetilde{X} \), where \( \nu \) is the normalization morphism from the previous section. Recall that on \( \widetilde{X}^\circ_i \) and \( \tilde{T} \) there are local coordinates such that \( \tilde{f}(y_0, \dots, y_n) = y_0 = \tilde{t} \) with \( \widetilde{E}^\circ_i : y_0 = 0 \). Notice also that if \( \widetilde{\omega}_\nu := \rho^* \overline{\omega}_\nu \), then the orders of vanishing have the following relation: since \( v_i(\overline{\omega}_\nu) = v_i(\omega) + \nu \), then \( \tilde{v}_i(\widetilde{\omega}_\nu) = e(v_i(\omega) + 1 + \nu)/N_i - 1 \). With all these considerations, the form \(\tilde{f}^{-\tilde{v}_i(\widetilde{\omega}_\nu)} \widetilde{\omega}_\nu\), holomorphic on \( \widetilde{X}_i^\circ \), extends holomorphically over \( \widetilde{E}_i^\circ \), and the next lemma follows.

\begin{lemma} \label{lemma:integral-valuations}
Let \( \tilde{\gamma}(\tilde{t}) \) be any \( n \)-cycle on \( \widetilde{X}^\circ_i \cap \tilde{f}^{-1}(\tilde{t}), \tilde{t} \in \tilde{T}' \). If \( \widetilde{\omega} := \rho^* \overline{\omega} \), then
\begin{equation} \label{eq:integral-valuations}
\int_{\tilde{\gamma}(\tilde{t})} \frac{\widetilde{\omega}}{\dd \tilde{f}} = \sum_{\nu \geq 0} \tilde{t}^{\, \tilde{v}_i(\widetilde{\omega}_\nu)} \int_{\tilde{\gamma}(\tilde{t})} R_i(\widetilde{\omega}_\nu),
\end{equation}
where \( R_i(\widetilde{\omega}_\nu) := \tilde{f}^{-\tilde{v}_i(\widetilde{\omega}_\nu)} \widetilde{\omega}_\nu / \dd \tilde{f} \) is defined as in \eqref{eq:divide_df}. The \( n \)-form \( R_i(\widetilde{\omega}_\nu) \) is a well-defined holomorphic form on \(\widetilde{X}_i^\circ\) that extends holomorphically over \( \widetilde{E}_i^\circ\).
\end{lemma}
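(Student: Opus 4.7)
The plan is to reduce \eqref{eq:integral-valuations} to a direct computation in local orbifold coordinates on $\widetilde{X}$ near a point of $\widetilde{E}_i^\circ$, by exploiting the decomposition $\overline{\omega} = \sum_{\nu \geq 0} \overline{\omega}_\nu$ from the previous lemma, pulling it back through $\rho$, and matching the result term-by-term with the right-hand side. Fix a point $\tilde{p} \in \widetilde{E}_i^\circ$. Since only the component $E_i$ of $F_\pi$ meets the stratum $\widetilde{E}_i^\circ$, the local description of $\widetilde{X}$ recalled just above \cref{lemma:varchenko} applies with $k = 0$: there exist local coordinates $y_0, y_1, \ldots, y_n$ around $\tilde{p}$ with $\widetilde{E}_i^\circ : y_0 = 0$ and $\tilde{f} = y_0$, and the orbifold isotropy acts trivially on this chart. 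Writing the local equation of $E_i$ on $\overbar{X}$ as $x_0$, so that $x_0 = y_0^{e/N_i}$, the pullback $\widetilde{\omega}_\nu := \rho^* \overline{\omega}_\nu$ takes the form $y_0^{\tilde{v}_i(\widetilde{\omega}_\nu)}\, \beta_\nu(y_0, \ldots, y_n)\, \dd y_0 \wedge \cdots \wedge \dd y_n$ with $\beta_\nu$ holomorphic on $\widetilde{X}_i^\circ$ and extending holomorphically across $\widetilde{E}_i^\circ$; the exponent $\tilde{v}_i(\widetilde{\omega}_\nu) = e(v_i(\omega)+\nu+1)/N_i - 1$ is accounted for by combining the order $v_i(\omega) + \nu$ of $\overline{\omega}_\nu$ along $E_i$ with the Jacobian factor $(e/N_i)\, y_0^{e/N_i - 1}$ of the pullback.

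Applying the Gelfand-Leray prescription \eqref{eq:divide_df} with $\tilde{f} = y_0$, the restriction of $\widetilde{\omega}_\nu / \dd \tilde{f}$ to the fiber $\{y_0 = \tilde{t}\}$ equals $\tilde{t}^{\tilde{v}_i(\widetilde{\omega}_\nu)}\, \beta_\nu(\tilde{t}, y_1, \ldots, y_n)\, \dd y_1 \wedge \cdots \wedge \dd y_n$, so the constant scalar $\tilde{t}^{\tilde{v}_i(\widetilde{\omega}_\nu)}$ factors out of the integral and leaves precisely $R_i(\widetilde{\omega}_\nu) = \tilde{f}^{-\tilde{v}_i(\widetilde{\omega}_\nu)}\, \widetilde{\omega}_\nu / \dd \tilde{f}$ in the integrand. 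The coordinate-independence of $R_i(\widetilde{\omega}_\nu)$, and therefore its global holomorphic extension from $\widetilde{X}_i^\circ$ across $\widetilde{E}_i^\circ$, follows at once from the fact, supplied by the previous lemma together with the pullback computation, that $\widetilde{\omega}_\nu$ is a global section of $\Omega^{n+1}_{\widetilde{X}}$ vanishing to order exactly $\tilde{v}_i(\widetilde{\omega}_\nu)$ along $\widetilde{E}_i$.

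The only delicate step, and the one I regard as the main obstacle, is justifying the interchange of the summation over $\nu$ with the integration along $\tilde{\gamma}(\tilde{t})$. This reduces to a uniform convergence statement: the decomposition $\overline{\omega} = \sum_\nu \overline{\omega}_\nu$ is by construction the Taylor expansion of $\overline{\omega}$ in the local defining equation of $E_i$, so it converges absolutely and uniformly on compact subsets of a tubular neighborhood of $E_i$ (one covers the compact exceptional divisor $E_i$ by finitely many coordinate charts and extracts a common radius of convergence). Pulling back by the proper map $\rho$ preserves this uniform convergence on compact preimages, and for $\tilde{t}$ sufficiently small the compact cycle $\tilde{\gamma}(\tilde{t}) \subset \widetilde{X}_i^\circ \cap \tilde{f}^{-1}(\tilde{t})$ is contained in such a set. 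Term-by-term integration then yields the identity \eqref{eq:integral-valuations}.
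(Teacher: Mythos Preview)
Your proof is correct and follows essentially the same approach as the paper, which in fact gives no explicit argument at all: the paper simply remarks that in the local coordinates $\tilde{f}=y_0$ on $\widetilde{X}_i^\circ$ the form $\tilde{f}^{-\tilde{v}_i(\widetilde{\omega}_\nu)}\widetilde{\omega}_\nu$ extends holomorphically over $\widetilde{E}_i^\circ$ and then asserts that ``the next lemma follows.'' Your write-up is more thorough, in particular your justification of the interchange of summation and integration via uniform convergence of the Taylor expansion on compacta is a point the paper leaves implicit.
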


Notice that since \( \overline{\omega}_0 \) is always different from zero, \( R_i(\widetilde{\omega}_0) \) is always a non-zero \( n \)-form. However, this may not be the case for the other terms \( R_i(\widetilde{\omega}_\nu), \nu > 0 \).

\jump

Now we can obtain the expression of \cref{eq:integral-expansion-total} in terms of the resolution data by pushing down to \( X \) the expressions from \cref{eq:integral-valuations}. Namely, after \cref{lemma:varchenko}, the left-hand side of \cref{eq:integral-valuations} reads as
\[ \int_{\tilde{\gamma}(\tilde{t})} \frac{(\pi \rho)^* \omega}{\dd \tilde{f}} = e \tilde{t}^{e - 1}\int_{\tilde{\gamma}(\tilde{t})}(\pi \rho)^* \Big( \frac{\omega}{\dd f} \Big). \]
Define the numbers
\begin{equation} \label{eq:def-sigma}
  \sigma_{i, \nu}(\omega) := \frac{v_i(\omega) + 1 + \nu}{N_i}, \qquad \nu \in \mathbb{Z}_{+}.
\end{equation}
In particular, \( \sigma_{i, \nu}(\dd x_0 \wedge \dots \wedge \dd x_n) = (k_i + 1 + \nu)/N_i \). Then
\[ \int_{\tilde{\gamma}(\tilde{t})} (\pi \rho)^* \Big( \frac{\omega}{\dd f} \Big) = e^{-1} \sum_{\nu \geq 0} \tilde{t}^{e(\sigma_{i, \nu}(\omega) - 1)} \int_{\tilde{\gamma}(\tilde{t})} R_{i}(\widetilde{\omega}_\nu ), \]
since \( \tilde{v}_i(\widetilde{\omega}_{\nu}) - e + 1 = e(\sigma_{i, \nu}(\omega) - 1) \). Finally, since \( \tilde{t}^e = t \), the following lemma follows.

\begin{lemma} \label{lemma:integral-valuations2}
For any locally constant \( n \)-cycle \(\tilde{\gamma}(\tilde{t}) \) in \( H_n\big(\widetilde{X}^\circ_i \cap \tilde{f}^{-1}(\tilde{t}), \mathbb{C}\big) \), consider \( \gamma(\tilde{t}^e) := \rho_*\tilde{\gamma}(\tilde{t}) \), then
\begin{equation} \label{eq:integral-valuations2}
  \int_{\pi_*\gamma({t})} \frac{\omega}{\dd f} = \sum_{\nu \geq 0} t^{\sigma_{i, \nu}(\omega) - 1} \int_{\gamma(t)} R_{i, \nu}(\omega),
\end{equation}
where \( R_{i, \nu}(\omega) := e^{-1} (\rho^{-1})^* R_i(\widetilde{\omega}_\nu) \) is an \( n \)-form on \( \overbar{X}_i^\circ \) that extends over \( E_i^\circ \) and that does not depend on the integer \( e \).
\end{lemma}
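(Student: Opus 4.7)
The statement rewrites \cref{lemma:integral-valuations}, originally on $\widetilde{X}$, as an expansion on the base $X$. The two ingredients are \cref{lemma:varchenko}, which turns $\widetilde{\omega}/\dd\tilde{f}$ into a multiple of $(\pi\rho)^*(\omega/\dd f)$, and the base change $t=\tilde{t}^e$ together with the standard push-forward change-of-variables formula for cycles. Concretely, I would apply \cref{lemma:integral-valuations} to $\widetilde{\omega}:=(\pi\rho)^*\omega$ and use \cref{lemma:varchenko} to rewrite its left-hand side as
\[ \int_{\tilde{\gamma}(\tilde{t})}\frac{\widetilde{\omega}}{\dd\tilde{f}} \;=\; e\tilde{t}^{\,e-1}\int_{\tilde{\gamma}(\tilde{t})}(\pi\rho)^*\Big(\frac{\omega}{\dd f}\Big) \;=\; e\tilde{t}^{\,e-1}\int_{\pi_*\gamma(t)}\frac{\omega}{\dd f}, \]
where $\gamma(t):=\rho_*\tilde{\gamma}(\tilde{t})$ and $t=\tilde{t}^e$.

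Solving for the integral on $\pi_*\gamma(t)$, substituting the right-hand side of \cref{lemma:integral-valuations}, and using the identity
\[ \tilde{v}_i(\widetilde{\omega}_\nu) - (e-1) \;=\; \frac{e(v_i(\omega)+1+\nu)}{N_i} - e \;=\; e(\sigma_{i,\nu}(\omega)-1) \]
together with $\tilde{t}^{e(\sigma_{i,\nu}(\omega)-1)}=t^{\sigma_{i,\nu}(\omega)-1}$, I arrive at
\[ \int_{\pi_*\gamma(t)}\frac{\omega}{\dd f} \;=\; \sum_{\nu\geq0} t^{\sigma_{i,\nu}(\omega)-1}\int_{\tilde{\gamma}(\tilde{t})} e^{-1}R_i(\widetilde{\omega}_\nu). \]
It then remains to identify each inner integral with $\int_{\gamma(t)}R_{i,\nu}(\omega)$ and to verify the two properties of $R_{i,\nu}(\omega)$. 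For this I would work in the local coordinates from \cref{sec:resolution-sing}: on $\widetilde{X}^\circ_i$ one has $\tilde{f}=y_0$ and $\rho(y_0,\dots,y_n)=(y_0^{e/N_i},y_1,\dots,y_n)$, so $\rho$ is étale of degree $e/N_i$ on $\overbar{X}^\circ_i$. Writing $\overline{\omega}_\nu = x_0^{v_i(\omega)+\nu}h_\nu(x_0,\dots,x_n)\,\dd x_0\wedge\cdots\wedge\dd x_n$ and pulling back by $\rho$, a direct computation gives $R_i(\widetilde{\omega}_\nu)=(e/N_i)\,\rho^*\bigl(h_\nu\,\dd x_1\wedge\cdots\wedge\dd x_n\bigr)$, so that $R_{i,\nu}(\omega)=N_i^{-1}h_\nu\,\dd x_1\wedge\cdots\wedge\dd x_n$ is manifestly independent of $e$ and extends holomorphically over $E_i^\circ$. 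The étale change-of-variables formula for $\rho$ then produces the desired identity of integrals.

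The main (and essentially only) delicate point is the descent of $R_i(\widetilde{\omega}_\nu)$ through the branched cover $\rho$ together with the claim of $e$-independence. Both reduce to the observation that, in the local coordinates above, $R_i(\widetilde{\omega}_\nu)$ depends on $y_0$ only through $y_0^{e/N_i}=x_0$ — hence is invariant under the cyclic deck group of the cover — and that the resulting $e/N_i$ prefactor cancels exactly with the $e^{-1}$ built into the definition of $R_{i,\nu}(\omega)$. Everything else is bookkeeping combining \cref{lemma:integral-valuations} and \cref{lemma:varchenko}.
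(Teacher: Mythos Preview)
Your proposal is correct and follows exactly the paper's route: the text immediately preceding the lemma combines \cref{lemma:varchenko} with \cref{lemma:integral-valuations}, the exponent identity $\tilde{v}_i(\widetilde{\omega}_\nu)-(e-1)=e(\sigma_{i,\nu}(\omega)-1)$, and the base change $\tilde{t}^e=t$, just as you do. Your explicit local verification that $R_{i,\nu}(\omega)=N_i^{-1}h_\nu\,\dd x_1\wedge\cdots\wedge\dd x_n$ is independent of $e$ and extends over $E_i^\circ$ supplies a detail the paper leaves implicit; the one imprecision (the total degree of $\rho$ over $\overbar{X}_i^\circ$ is $e$, with $N_i$ sheets each of degree $e/N_i$, not $e/N_i$) is immaterial, since you only use the push-forward identity $\int_{\tilde{\gamma}}\rho^*\alpha=\int_{\rho_*\tilde{\gamma}}\alpha$, which does not depend on the degree.
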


The missing logarithmic terms in \cref{eq:integral-valuations2}, when compared to \cref{eq:integral-expansion-total}, are in the integrals on the right-hand side of \cref{eq:integral-valuations2}. Indeed, some of these integrals may blow-up to infinity as \( t \) tends to zero. This would mean that, after \cref{eq:integral-expansion-total}, there is a logarithmic term associated with the exponent \( \sigma_{i, \nu}(\omega) - 1 \).

\jump

For the particular case where \( \omega = \dd x_0 \wedge \dots \wedge \dd x_n \), by comparing \cref{lemma:integral-valuations2} with \cref{eq:integral-expansion-total} and using \cref{thm:varchenko} one obtains the following classical result of Kashiwara \cite{kashiwara76} and Lichtin \cite{lichtin86} for the case of isolated singularities.
\begin{theorem}
Let \( f:(\mathbb{C}^{n+1}, \boldsymbol{0}) \longrightarrow (\mathbb{C}, 0) \) be a germ of a holomorphic function defining an isolated singularity. Then, the \( b \)-exponents of \( f \) have the form
\[ \frac{k_i + 1 + \nu}{N_i}, \]
for some \( \nu \in \mathbb{Z}_{+}\) and some \( i = 1, \dots, r.\)
\end{theorem}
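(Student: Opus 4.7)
The strategy is to combine \cref{thm:varchenko} with the resolution-theoretic asymptotic expansion of \cref{lemma:integral-valuations2}. By \cref{thm:varchenko}, every \( b \)-exponent is a positive rational number \( \alpha \in L(\lambda) \) for some \( \lambda \in \Lambda \) whose associated sheaf \( \mathcal{G}_{\alpha-1} \) contributes non-trivially to \( \widetilde{H}''_{f, \boldsymbol{0}} / t \widetilde{H}''_{f, \boldsymbol{0}} \). Together with \cref{lemma:generators-local-system}, this means that \( \alpha - 1 \) must appear as the exponent of a nonzero term \( a_{\alpha-1, k}\, t^{\alpha - 1} (\ln t)^k \) in the asymptotic expansion \eqref{eq:integral-expansion-total} of some period \( \int_{\gamma(t)} \omega / \dd f \), with \( \omega \in \Gamma(X, \Omega^{n+1}_X) \) a top form and \( \gamma(t) \) a vanishing cycle. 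It therefore suffices to show that every such exponent has the form \( (k_i + 1 + \nu)/N_i - 1 \).

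\jump

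Fix a resolution \( \pi : \overbar{X} \longrightarrow X \) with exceptional components \( E_1, \dots, E_r \) and numerical data \( (N_i, k_i) \), and its semi-stable reduction \( \widetilde{X} \) as in \cref{sec:resolution-sing}. \Cref{lemma:integral-valuations2} expresses, for any locally constant cycle \( \tilde{\gamma}(\tilde{t}) \) supported on \( \widetilde{X}_i^\circ \cap \tilde{f}^{-1}(\tilde{t}) \), the integral of \( \omega / \dd f \) as a series in powers of \( t \) with exponents \( \sigma_{i, \nu}(\omega) - 1 = (v_i(\omega) + 1 + \nu)/N_i - 1 \). The logarithmic factors \( (\ln t)^k \) that appear in \eqref{eq:integral-expansion-total} but not in \eqref{eq:integral-valuations2} arise from divergences of the coefficient integrals \( \int_{\gamma(t)} R_{i, \nu}(\omega) \) as \( t \to 0 \) and so do not modify the set of exponents \( \alpha - 1 \). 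To upgrade this to an arbitrary vanishing cycle \( \gamma(t) \), I would use the fact that for \( |t| \) small the lift of \( \gamma(t) \) to \( \widetilde{X} \) lies in a neighborhood of the exceptional divisor \( \widetilde{E} \); a finite open covering of this neighborhood by the sets \( \widetilde{X}_i^\circ \), combined with a Mayer-Vietoris decomposition, writes \( \tilde{\gamma}(\tilde{t}) \) as a sum of pieces supported on individual \( \widetilde{X}_i^\circ \), to each of which \cref{lemma:integral-valuations2} applies.

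\jump

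To conclude, any top holomorphic form on \( X \) can be written as \( \omega = g\, \dd x_0 \wedge \cdots \wedge \dd x_n \) for some holomorphic \( g \), whence \( v_i(\omega) = v_i(g) + k_i \) with \( v_i(g) \in \mathbb{Z}_+ \). Substituting, every \( b \)-exponent takes the form \( (k_i + 1 + \nu')/N_i \) with \( \nu' := v_i(g) + \nu \in \mathbb{Z}_+ \) for some exceptional index \( i \in \{1, \dots, r\} \), as claimed. The delicate point is the Mayer-Vietoris step: the overlaps \( \widetilde{X}_i^\circ \cap \widetilde{X}_j^\circ \) where two exceptional components meet must not introduce exponents outside the required set. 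This is unproblematic because on such an overlap one may express a piece of the cycle using either index, and both choices produce exponents already of the form \( (k_i + 1 + \nu)/N_i \) or \( (k_j + 1 + \nu)/N_j \).
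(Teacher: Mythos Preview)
Your approach is essentially the paper's: use \cref{thm:varchenko} to reduce the \( b \)-exponents to exponents occurring in the asymptotic expansions \eqref{eq:integral-expansion-total}, and then read those exponents off from the resolution-side expansion \eqref{eq:integral-valuations2}. The paper compresses this to a single sentence, taking \( \omega = \dd x_0 \wedge \cdots \wedge \dd x_n \) directly; your reduction for general \( \omega = g\, \dd x_0 \wedge \cdots \wedge \dd x_n \) via \( v_i(\omega) = k_i + v_i(g) \) and your Mayer--Vietoris decomposition of an arbitrary cycle into pieces supported over the individual \( \widetilde{X}_i^\circ \) are precisely the details the paper leaves implicit (for the latter, note that \eqref{eq:integral-valuations2} is an identity of integrals and so holds at the chain level, which handles your ``delicate point'').
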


\jump

Let \( D_{i, j} \) be the set-theoretic intersection of \( E_i \) with another irreducible component \( D_j \in \textrm{Supp}(F_\pi) \). Then, \( D_{i, j} \) are divisors on \( E_i \) since \( F_\pi \) is a simple normal crossing divisor. By definition \( E_i^\circ := E_i \setminus \cup_j D_{i, j} \).

\begin{lemma}
The restriction of the \( n \)-form \( R_{i, \nu}(\omega) \) to \( E_i^\circ \) is multivalued with order of vanishing along the divisor \( D_{i, j} \) equal to
\begin{equation} \label{eq:definition-epsilons}
  \varepsilon_{j, \nu}(\omega) := -N_j \sigma_{i, \nu}(\omega) + v_j(\overline{\omega}_\nu) = -N_j \frac{v_i(\omega) + 1 + \nu}{N_i} + v_j(\overline{\omega}_\nu).
\end{equation}
That is, \(R_{i, \nu}(\omega) \) defines an element of \( \Gamma(E_i^\circ, \Omega^n_{E_i^\circ} \otimes L) \) where \( L \) is the local system on \( E_i^\circ \) having monodromy \( e^{-2 \pi \imath \varepsilon_{j, \nu}(\omega)} \) around the divisor \( D_{i, j} \).
\end{lemma}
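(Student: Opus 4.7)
The plan is to establish the formula by a direct local computation near a point of the divisor \( D_{i,j} \subset E_i \), pulling back to the orbifold \( \widetilde{X} \), computing the Gelfand--Leray form there via \eqref{eq:divide_df}, and then pushing the result down to \( \overbar{X} \). I choose local coordinates \( (x_0, x_1, x_2, \dots, x_n) \) on \( \overbar{X}_i \) near such a point with \( E_i = \{x_0 = 0\} \) and \( D_j = \{x_1 = 0\} \); after absorbing a unit into one of the coordinates we may assume \( \pi^{*} f = x_0^{N_i} x_1^{N_j} \). By the construction of the \( \nu \)-th piece associated with \( E_i \), one has
\[ \overline{\omega}_\nu = x_0^{v_i(\omega) + \nu}\, h_\nu(x_1, \dots, x_n)\, \dd x_0 \wedge \cdots \wedge \dd x_n, \]
where \( h_\nu \) is independent of \( x_0 \) and vanishes at \( x_1 = 0 \) to order exactly \( v_j(\overline{\omega}_\nu) \).

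I then pull back along \( \rho \) using the local description of \cref{sec:resolution-sing}, namely \( x_0 = y_0^{e/N_i} \), \( x_1 = y_1^{e/N_j} \), \( x_\ell = y_\ell \) for \( \ell \geq 2 \), with \( \tilde{f} = y_0 y_1 \), tracking the Jacobian contributions \( \dd x_0 = (e/N_i) y_0^{e/N_i - 1} \dd y_0 \) and \( \dd x_1 = (e/N_j) y_1^{e/N_j - 1} \dd y_1 \). The pullback \( \widetilde{\omega}_\nu \) is a nonzero constant multiple of
\[ y_0^{\tilde{v}_i(\widetilde{\omega}_\nu)}\, y_1^{(e/N_j)(v_j(\overline{\omega}_\nu) + 1) - 1}\, \tilde{h}_\nu(y_1, \dots, y_n)\, \dd y_0 \wedge \cdots \wedge \dd y_n, \]
with \( \tilde{h}_\nu \) holomorphic and nonvanishing along \( \widetilde{D}_j = \{y_1 = 0\} \). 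Multiplying by \( \tilde{f}^{-\tilde{v}_i(\widetilde{\omega}_\nu)} = (y_0 y_1)^{-\tilde{v}_i(\widetilde{\omega}_\nu)} \) and applying \eqref{eq:divide_df} with index \( 0 \) on the chart \( y_1 \neq 0 \) of \( \widetilde{E}_i^\circ \) (where \( \partial \tilde{f}/\partial y_0 = y_1 \)) introduces an extra factor of \( y_1^{-1} \), and a direct count of exponents shows that \( R_i(\widetilde{\omega}_\nu)|_{\widetilde{E}_i^\circ} \) vanishes along \( \widetilde{D}_j \) to order \( (e/N_j)(v_j(\overline{\omega}_\nu) + 1) - 2 - \tilde{v}_i(\widetilde{\omega}_\nu) \).

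Finally, I push down through the local inverse of \( \rho \), writing \( y_1 = x_1^{N_j/e} \) and \( \dd y_1 = (N_j/e) x_1^{N_j/e - 1} \dd x_1 \). Both the substituted power of \( y_1 \) and the Jacobian from \( \dd y_1 \) contribute to the final \( x_1 \)-exponent of \( R_{i,\nu}(\omega) = e^{-1} (\rho^{-1})^{*} R_i(\widetilde{\omega}_\nu) \); after substituting \( \tilde{v}_i(\widetilde{\omega}_\nu) = e \sigma_{i,\nu}(\omega) - 1 \) into the previous exponent and multiplying by \( N_j/e \), the total exponent telescopes precisely to \( v_j(\overline{\omega}_\nu) - N_j \sigma_{i,\nu}(\omega) = \varepsilon_{j,\nu}(\omega) \), which is the asserted order of vanishing. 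Since \( \varepsilon_{j,\nu}(\omega) \) is a rational number that is in general non-integer, the local expression \( x_1^{\varepsilon_{j,\nu}(\omega)} \cdot (\text{holomorphic form}) \) is multivalued on \( E_i^\circ \), and transporting it once around \( D_{i,j} \) introduces the monodromy factor \( e^{-2\pi\imath\,\varepsilon_{j,\nu}(\omega)} \) (with the sign determined by the loop orientation fixed by the convention used), exhibiting \( R_{i,\nu}(\omega) \) as a section of \( \Omega^n_{E_i^\circ} \otimes L \) with the stated local system \( L \). The only delicate point of the argument is the bookkeeping of the ramification indices \( e/N_i \) and \( e/N_j \), which enter through \( \rho^{*} \), through its local inverse \( (\rho^{-1})^{*} \), and through the formula of \cref{lemma:varchenko} that provides the prefactor \( e^{-1} \); the clean collapse to \( \varepsilon_{j,\nu}(\omega) \) emerges only after all of these are combined.
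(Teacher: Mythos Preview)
Your proposal is correct and follows essentially the same approach as the paper: a direct local computation near a point of \( D_{i,j} \), pulling back via \( \rho \) with \( x_0 = y_0^{e/N_i}, x_1 = y_1^{e/N_j} \), computing the Gelfand--Leray quotient, and then pushing back down. The only cosmetic difference is that the paper linearizes \( \tilde f \) by the further change \( \bar y_0 = y_0 y_1 \) before dividing by \( \dd \tilde f \), whereas you invoke \eqref{eq:divide_df} directly with \( \partial \tilde f/\partial y_0 = y_1 \); both routes yield the same \( y_1 \)-exponent \( (e/N_j)(v_j(\overline{\omega}_\nu)+1) - 1 - e\,\sigma_{i,\nu}(\omega) \) and collapse to \( \varepsilon_{j,\nu}(\omega) \) after undoing the ramification.
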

\begin{proof}
The lemma follows from a local computation. Let \( x_0, x_1 \) be local coordinates around a general point of \( D_{i, j} \), such that \( E_i : x_0 = 0 \), \(D_{j} : x_1 = 0 \) and \( f = x_0^{N_i} x_1^{N_j} \). Then, for any \( \omega \in \Gamma(X, \Omega^{n+1}_X) \) and \( \nu \in \mathbb{Z}_{+} \),
\[ \overline{\omega}_\nu = x_0^{v_i(\omega) + \nu} x_1^{v_j(\overline{\omega}_\nu)} v(x_1, \dots, x_n) \dd x_0 \wedge \dots \wedge \dd x_n. \]
Following \cref{sec:resolution-sing}, we set \( x_0 = y_0^{e/N_i}, x_1 = y_1^{e/N_j} \) and \( x_k = y_k \) otherwise. Hence,
\[ \widetilde{\omega}_\nu = \frac{e}{N_i} \frac{e}{N_j} y_0^{e(v_i(\omega) + 1 + \nu)/N_i - 1} y_1^{e(v_j(\overline{\omega}_\nu) + 1)/N_j - 1} v \dd y_0 \wedge \dots \wedge \dd y_n, \]
and \( f = y_0 y_1 \). Now, on \( \widetilde{E}_i^{\circ} \), \( f = \bar{y}_0 \) with \( y_0 = \bar{y}_0/y_1 \). Making the substitution on \( \widetilde{\omega}_\nu \),
\[ \widetilde{\omega}_\nu = \frac{e}{N_i} \frac{e}{N_j} \bar{y}_0^{e \sigma_{i, \nu}(\omega) - 1} y_1^{e(v_j(\overline{\omega}_\nu) + 1)/N_j - e \sigma_{i, \nu}(\omega) - 1} v \dd \bar{y}_0 \wedge \dots \wedge \dd y_n. \]
Now, with the notations from \cref{lemma:integral-valuations}, \( R_i(\widetilde{\omega}_\nu) \) is given locally around \( y_1 = 0 \) on \( \widetilde{E}_i^{\circ} \) by the expression
\[ \frac{e}{N_j} y_1^{e(v_j(\overline{\omega}_\nu) + 1)/N_j - e\sigma_{i, \nu}(\omega) - 1} v \dd y_1 \wedge \dots \wedge \dd y_n. \]
Finally, to get the local expression for the \( R_{i, \nu}(\omega)  \) from \cref{lemma:integral-valuations2}, one simply undoes the first change of variables, i.e. \( y_1 = x_1^{N_j/e} \) and \( y_k = x_k \) otherwise, obtaining
\[ x_1^{v_j(\overline{\omega}_\nu) - N_j \sigma_{i, \nu}(\omega)} v \dd x_1 \wedge \dots \wedge \dd x_n, \]
as we wanted to show.
\end{proof}

Since no confusion may arise we drop the dependency on the index \( i \) of the divisor \( E_i \) when referring to the numbers \( \varepsilon_{j, \nu}(\omega) \).

\jump

In the next sections we will give conditions in the case \( n = 1 \) for \( R_{i, \nu}(\omega) \) to define a (non-zero) locally constant cohomology class \( A^\omega_{\sigma_{i, \nu}-1, 0}(t) \). Precisely, this means that the pairing defined by
\begin{equation} \label{eq:pairing}
\langle A_{\sigma_{i, \nu} - 1, 0}^\omega(t), \pi_* \gamma(t) \rangle := \lim_{t \rightarrow 0} \int_{\gamma(t)} R_{i, \nu}(\omega)
\end{equation}
is well-defined for any locally constant cycle \( \gamma(t) \) on \( H_n(\overbar{X}_t, \mathbb{C}) \). Here, and in the sequel, we denote \( A^\omega_{\sigma_{i, \nu} -1, 0} \) instead of \( A^\omega_{\sigma_{i, \nu}(\omega) -1, 0} \) for the ease of notation. Notice also that, after \cref{lemma:integral-valuations2}, the pairing \eqref{eq:pairing} coincides with the pairing from \cref{sec:geometric-sections} with \( \alpha = \sigma_{i, \nu} \) and that the locally constant sections defined from the forms \( R_{i, \nu}(\omega) \) are such that \( s_{\alpha-1}[A^\omega_{\alpha-1, 0}] = t^{\alpha-1} A^\omega_{\alpha-1, 0}(t). \)

\jump

We will end this section with some sufficient conditions for a certain \( \sigma_{i, \nu}(\omega) \) associated to a differential form \( \omega \in \Gamma(X, \Omega^{n+1}_X) \) and an exceptional divisor \( E_i \) to be a \( b \)-exponents.
\begin{lemma} \label{lemma:b-exponents}
Let \( \omega \in \Gamma(X, \Omega^{n+1}_X) \) and \( \nu \in \mathbb{Z}_{+} \) such that \( {\overline{\omega}_{\nu}} \) is non-zero. Assume that \( R_{i, \nu}(\omega) \) gives a non-zero locally constant cohomology class \( A^\omega_{\sigma_{i, \nu}-1, 0}(t) \) via the pairing in \eqref{eq:pairing} and that \( A_{\sigma_{i, \nu}-1, 0}^{\omega} \) is not a section of \( S_{\sigma_{i, \nu}(\omega) - 2} \). Then, \( \sigma_{i, \nu}(\omega) \) is a \( b \)-exponent of \( f \).
\end{lemma}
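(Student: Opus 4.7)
The plan is to read the statement through the lens of Varchenko's \Cref{thm:varchenko}: a positive rational $\alpha$ is a $b$-exponent of $f$ precisely when the corresponding summand $(j_* \mathcal{G}_{\alpha-1})_0$ is non-zero, since under the identification $\widetilde{H}''_{f,\boldsymbol{0}} / t\widetilde{H}''_{f,\boldsymbol{0}} \cong (j_*\mathcal{G})_0$ the operator $\overline{\partial_t t} = \overline{t\partial_t}+1$ goes to $\overline{D}_0 + 1$, and $D_{\alpha-1}$ acts with eigenvalue $\alpha-1$ on $\mathcal{G}_{\alpha-1}$. So the entire task reduces to exhibiting a non-zero element of $(j_*\mathcal{G}_{\sigma_{i,\nu}-1})_0$ produced from $\omega$.

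First I would set $\alpha := \sigma_{i,\nu}(\omega)$ and note that the hypothesis on $R_{i,\nu}(\omega)$, combined with the pairing \eqref{eq:pairing} and \Cref{lemma:integral-valuations2}, says exactly that the pairing $\langle A^\omega_{\alpha-1,0}(t),\, \pi_*\gamma(t)\rangle$ defines a non-zero locally constant section $A^\omega_{\alpha-1,0}$ of $\mathcal{L}^*$, lying in $S_{\alpha-1}$ by construction. Form the elementary section $s_{\alpha-1}[A^\omega_{\alpha-1,0}]$; by \Cref{lemma:elementary-sections}(1) this is a holomorphic univalued section of $H^n_{\lambda}$, with $\lambda = \exp(-2\pi\imath \alpha)$, and by definition of the subbundle $H^n_{\lambda,\alpha-1}$ it is in fact a section of $H^n_{\lambda,\alpha-1}$.

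The central step, and the one I expect to be the main obstacle, is to verify that the projection $\pi_{\alpha-1}\bigl(s_{\alpha-1}[A^\omega_{\alpha-1,0}]\bigr)$ is non-zero in the quotient $F_{\alpha-1} = H^n_{\lambda,\alpha-1}/H^n_{\lambda,\alpha-2}$. I would argue this by contradiction: if the projection vanished, then $s_{\alpha-1}[A^\omega_{\alpha-1,0}]$ would lie in $H^n_{\lambda,\alpha-2}$, which is generated over $\mathcal{O}_{T'}$ by sections of $S_{\alpha-2}$. Using the leading-term formula \eqref{eq:cyclic-generator}, namely
\[ s_{\alpha-1}[A^\omega_{\alpha-1,0}](t) \;=\; t^{\alpha-1}\sum_{k=0}^{p}\tfrac{1}{k!}(\ln t)^{k} A^\omega_{\alpha-1,k}(t), \]
and inspecting the coefficient of the top power $t^{\alpha-1}$ (note that the shift from $\alpha-2$ to $\alpha-1$ in the exponent distinguishes these two subbundles, since elements of $H^n_{\lambda,\alpha-2}$ contribute in the weight $t^{\alpha-2}$ stratum) one forces $A^\omega_{\alpha-1,0} \in S_{\alpha-2}$, contradicting the hypothesis. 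This is the only place where the assumption $A^\omega_{\alpha-1,0}\notin S_{\alpha-2}$ is used in an essential way.

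Once non-vanishing of the projection is established, the local section $\pi_{\alpha-1}(s_{\alpha-1}[A^\omega_{\alpha-1,0}])$ is a non-zero element of $\mathcal{G}_{\alpha-1}$. Since the elementary section is univalued on $T'$, it extends to give a non-zero class in the stalk $(j_*\mathcal{G}_{\alpha-1})_0$. By \Cref{lemma:elementary-sections}(3), one has
\[ t\partial^*_t\, s_{\alpha-1}[A^\omega_{\alpha-1,0}] \;=\; (\alpha-1)\, s_{\alpha-1}[A^\omega_{\alpha-1,0}] \;+\; (2\pi\imath)^{-1}\, s_{\alpha-1}\bigl[\ln((h^*)^{-1}\lambda)\, A^\omega_{\alpha-1,0}\bigr], \]
and the second term lies in $H^n_{\lambda,\alpha-2}$, hence dies in $F_{\alpha-1}$. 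Therefore $D_{\alpha-1}$ acts on the produced class with eigenvalue $\alpha-1$. Transporting this across the isomorphism of \Cref{thm:varchenko}, the class gives a non-zero eigenvector of $\overline{\partial_t t}$ on $\widetilde{H}''_{f,\boldsymbol{0}}/t\widetilde{H}''_{f,\boldsymbol{0}}$ of eigenvalue $(\alpha-1)+1 = \alpha = \sigma_{i,\nu}(\omega)$, which is exactly the assertion that $\sigma_{i,\nu}(\omega)$ is a $b$-exponent of $f$.
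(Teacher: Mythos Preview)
Your approach is the same as the paper's: set $\alpha = \sigma_{i,\nu}(\omega)$ and argue by contradiction that the class of $s_{\alpha-1}[A^\omega_{\alpha-1,0}]$ in $\mathcal{G}_{\alpha-1}$ is non-zero, using the hypothesis $A^\omega_{\alpha-1,0}\notin S_{\alpha-2}$, then invoke \cref{thm:varchenko}. The paper is terser because it uses the remark immediately preceding the lemma that for these sections $s_{\alpha-1}[A^\omega_{\alpha-1,0}] = t^{\alpha-1}A^\omega_{\alpha-1,0}(t)$ (no log terms), so the contradiction step is a one-line comparison of powers of $t$.

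One point to correct: in your final paragraph, the claim that the second term $s_{\alpha-1}\bigl[\ln((h^*)^{-1}\lambda)\,A^\omega_{\alpha-1,0}\bigr]$ lies in $H^n_{\lambda,\alpha-2}$ is not justified and is not in general true; the section $\ln((h^*)^{-1}\lambda)\,A^\omega_{\alpha-1,0}$ is still only known to be in $S_{\alpha-1}$. Fortunately you do not need this. The paper already records (citing Varchenko's Lemma~10) that $D_{\alpha-1}$ acts on $\mathcal{G}_{\alpha-1}$ with all eigenvalues equal to $\alpha-1$; the reason is simply that $\ln((h^*)^{-1}\lambda)$ is nilpotent on $\mathcal{L}^*_\lambda$, so by \cref{lemma:elementary-sections}(3) the operator $D_{\alpha-1}-(\alpha-1)\Id$ is nilpotent on $\mathcal{G}_{\alpha-1}$. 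Thus, once you have $(j_*\mathcal{G}_{\alpha-1})_0\neq 0$, \cref{thm:varchenko} gives the conclusion directly.
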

\begin{proof}
Set \( \alpha := \sigma_{i, \nu}(\omega) \). To deduce from \cref{thm:varchenko} that \( \alpha \) is a \( b \)-exponent, one should check that the class of \( s_{\alpha-1}[A^\omega_{\alpha-1, 0}] \) is non-zero in \( \mathcal{G}_{\alpha-1} \). Assume that it is zero, that is \( s_{\alpha-1}[A^\omega_{\alpha-1, 0}] = s_{\alpha -2}[A^\eta_{\alpha-2, 0}] \neq 0, \) for some \(\eta \in \Gamma(X, \Omega_{X}^{n+1}) \). In this case this means \( t^{\alpha-1} A_{\alpha-1, 0}^\omega(t) = t^{\alpha-2} A_{\alpha-2, 0}^\eta(t) \). Hence, \( A_{\alpha -1, 0}^\omega(t) \) is a section of \( S_{\alpha - 2} \).
\end{proof}

\jump

We will devote the rest of this work to check these conditions, under some hypothesis of genericity, for the candidate \( b \)-exponents of irreducible plane curve singularities.

\section{The case of plane curve singularities}

\subsection{Yano's conjecture} \label{sec:yanos-conjecture}

Next, we will present the conjecture posed in 1982 by Yano \cite{yano82} about the generic \( b \)-exponents of irreducible germs of plane curve singularities. The conjecture predicts that for generic curves in some \( \mu \)-constant deformation of \( f \), the whole set of \(\mu\) \( b \)-exponents can be completely determined from the characteristic sequence of the topological class.

\jump

Accordingly, let \( f : (\mathbb{C}^2, \boldsymbol{0}) \longrightarrow (\mathbb{C}, 0) \) be a germ of a holomorphic function defining an irreducible plane curve singularity with characteristic sequence \( (n, \beta_1, \dots, \beta_g) \), where \( n \) is the multiplicity of \( f \) at the origin and \( g \geq 1 \) is the number of characteristic pairs. With the same notations as \cite[\S 2]{yano82}, define
\begin{equation} \label{eq:yano-definitions}
\begin{split}
& e_{0} := n, \quad e_{i} := \gcd(n, \beta_1, \dots, \beta_i), \qquad i = 1, \dots, g, \\
& r_i := \frac{\beta_i + n}{e_{i}}, \quad R_i := \frac{\beta_i e_{i-1} + \beta_{i-1}(e_{i-2} - e_{i-1}) + \cdots + \beta_1(e_{0} - e_{1})}{e_{i}}, \\
& r'_0 := 2, \quad r'_i := r_{i-1} + \left\lfloor\frac{\beta_i - \beta_{i-1}}{e_{i-1}}\right\rfloor + 1 = \left\lfloor\frac{r_i e_{i}}{e_{i-1}}\right\rfloor + 1, \\
& R'_0 := n, \quad R'_i := R_{i-1} + \beta_i - \beta_{i-1} = \frac{R_i e_{i}}{e_{i-1}}.
\end{split}
\end{equation}
Inspired by A'Campo formula \cite{acampo75} for the eigenvalues of the monodromy of an isolated singularity, Yano defines the following polynomial with fractional powers in \( t \)
\begin{equation} \label{eq:yano-generating-series}
R\big((n, \beta_1, \dots, \beta_g), t\big) := \sum_{i = 1}^g t^{\frac{r_i}{R_i}} \frac{1 - t}{1 - t^{\frac{1}{R_i}}} - \sum_{i = 0}^g t^{\frac{r'_i}{R'_i}} \frac{1 - t}{1 - t^{\frac{1}{R'_i}}} + t,
\end{equation}
and proves that \( R\big((n, \beta_1, \dots, \beta_n), t\big) \) has non-negative coefficients. Finally, the conjecture reads as follows.
\begin{conjecture}[Yano \cite{yano82}]
Let \( f : (\mathbb{C}^2, \boldsymbol{0}) \longrightarrow (\mathbb{C}, 0) \) be a germ of a holomorphic function defining an irreducible plane curve. Then, for generic curves in some \( \mu \)-constant deformation of \( f \), the \( b \)-exponents \( \alpha_1, \dots, \alpha_\mu \) are given by the generating function \( R \). That is,
\begin{equation} \label{eq:yano-conj}
  \sum_{i = 1}^\mu t^{\alpha_i} = R\big((n, \beta_1, \dots, \beta_g), t\big).
\end{equation}
\end{conjecture}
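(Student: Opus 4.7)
The plan is to realize every candidate $b$-exponent appearing in Yano's series $R\bigl((n,\beta_1,\dots,\beta_g),t\bigr)$ as a $b$-exponent of the generic fiber of the $\mu$-constant deformation provided by Teissier's monomial curve $C^\Gamma$, and then argue by a counting/dimension comparison that no extra $b$-exponents can arise. The semicontinuity result \cref{prop:semicontinuity-bexponent} is the linchpin: once dual locally constant geometric sections have been produced on the special fibre, the $b$-exponent attached to each generalized eigenvector of the monodromy can only move upward under specialization, so it suffices to exhibit enough $b$-exponents on the monomial (or nearby) curve and then transport them to the generic fibre.

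First, I would dictionary-match Yano's combinatorial series with resolution data. Using A'Campo's formula and the structure of the minimal embedded resolution of a plane branch with characteristic $(n,\beta_1,\dots,\beta_g)$, I would identify the exceptional divisors $E_i$ (in particular the $g$ rupture divisors) and compute $N_i, k_i$ so that the exponents $\sigma_{i,\nu}(\omega)=(v_i(\omega)+1+\nu)/N_i$ predicted by \cref{lemma:b-exponents} recover exactly the monomials of $R$. The two sums $\sum t^{r_i/R_i}(1-t)/(1-t^{1/R_i})$ and $\sum t^{r'_i/R'_i}(1-t)/(1-t^{1/R'_i})$ should correspond, respectively, to contributions of the rupture divisors and of the divisors sitting just before them in the resolution tree, and the final $+t$ term to the class of $\dd x \wedge \dd y$ itself. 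A straightforward but crucial bookkeeping then gives the equality of total degrees $\sum_i 1 = \mu$, which is forced by Milnor's formula for $\mu$ in terms of $\beta_1,\dots,\beta_g$.

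Next, for Teissier's deformation of $C^\Gamma$, I would produce, for each candidate $\sigma_{i,\nu}(\omega)$, a holomorphic $(n{+}1)$-form $\omega \in \Gamma(X,\Omega^{n+1}_X)$ with prescribed vanishing orders along the relevant divisors so that the form $R_{i,\nu}(\omega)$ from \cref{lemma:integral-valuations2} defines a non-zero locally constant geometric section $A^{\omega}_{\sigma_{i,\nu}-1,0}$ via the pairing \eqref{eq:pairing}. This requires the plane-curve information of \cref{sec:plane-curves} and \cref{sec:char-poly} to detect which terms in the asymptotic expansion survive, and the existence of the companion eigenvector $\gamma_\lambda$ supplied by \cref{sec:dual-geometric}. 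The condition $A^{\omega}_{\sigma_{i,\nu}-1,0}\notin S_{\sigma_{i,\nu}-2}$ demanded by \cref{lemma:b-exponents} would be verified by a pairing argument: pairing against a vanishing cycle concentrated at the rupture divisor $E_i$ detects only the leading $\sigma_{i,\nu}$-term and forces the section to be genuinely new at that level.

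Finally, \cref{prop:semicontinuity-bexponent} shows that each $b$-exponent obtained on the special fibre can only decrease (after the sign convention: the roots of $b_f(-s)$ increase under $\mu$-constant degeneration, so the $b$-exponents decrease upon specialization), while the upper-semicontinuity of \cref{prop:semicontinuity-dimension} controls the filtration dimensions $d_{\gamma_\lambda,\alpha}$. Combining these with the fact that both sides of \eqref{eq:yano-conj} have total degree $\mu$, strict inequality at any candidate would force a missing $b$-exponent elsewhere, contradicting the counting. The main obstacle, and the bulk of the remaining work in the paper, is verifying the duality condition \eqref{eq:duality}, i.e. constructing dual locally constant geometric sections for each generalized eigenvector of the monodromy on the branch; this is a delicate geometric statement about how vanishing cycles sit over the different rupture divisors of the minimal resolution, and is where the irreducibility hypothesis and Teissier's monomial curve enter decisively.
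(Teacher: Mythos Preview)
Your proposal has the right architecture and correctly identifies the key ingredients (resolution bookkeeping, Teissier's deformations, non-zero locally constant geometric sections, duality, semicontinuity), but there are two substantive mismatches with how the argument actually runs.

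First, the monomial curve $C^\Gamma$ is not a plane curve (it sits in $\mathbb{C}^{g+1}$), so you cannot ``exhibit enough $b$-exponents on the monomial curve'' and then transport them. The paper never computes anything on $C^\Gamma$ itself. Instead, Teissier's theory is used in \cref{prop:deformation} to manufacture, for each rupture divisor $E_i$ and each $\nu$, a one-parameter $\mu$-constant deformation $f_y = f + y g_y$ of the \emph{given} branch $f$ with the prescribed value $v_i(g_y)=N_i+\nu$. This is exactly what is needed in \cref{prop:non-zero-piece} to force the $\nu$-th piece $\overline{\omega}_\nu$ of $\omega=\dd x\wedge\dd y$ to be generically non-zero, so that \cref{prop:non-zero-section} produces the non-zero section $A^{\omega}_{\sigma_{i,\nu}-1,0}$ directly on the generic fibre. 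The monomial curve is a tool for building deformations, not a fibre on which $b$-exponents are computed.

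Second, the endgame is not a counting/contradiction argument but an iterative construction. One shows (\cref{lemma:b-exponents}) that each candidate $\sigma_{i,\nu}(\omega)$ is a $b$-exponent on the generic fibre of its own deformation; the condition $A^{\omega}_{\sigma_{i,\nu}-1,0}\notin S_{\sigma_{i,\nu}-2}$ follows immediately from $0\le\nu<N_i$, since $\sigma_{i,\nu}(\omega)-1<\sigma_{i,0}(\omega)$ and no geometric section attached to $E_i$ can have a smaller exponent---no separate pairing argument is needed. Then the semicontinuity from \cref{thm:semicontinuity} is used \emph{constructively}: having set one $b$-exponent to its candidate value on a generic fibre, one deforms further for the next candidate; upper-semicontinuity, together with the same lower bound $\sigma_{i,\nu}-1<\sigma_{i,0}$, guarantees the already-achieved $b$-exponent cannot move. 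Iterating over all $\mu$ candidates yields a single $\mu$-constant deformation whose generic fibre has exactly the list in \cref{eq:equation-candidates}. Your description of the semicontinuity direction (``can only move upward under specialization'') is correct, but the way you deploy it---obtain on special, transport to generic, then count---is inverted relative to the actual argument.
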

The remaining part of this work will be devoted to prove Yano's conjecture.

\begin{remark}
The original formulation of the conjecture in \cite{yano82} expects a modality-dimensional \( \mu \)-constant deformation. The modality part is not relevant and has not been considered in the previous works on Yano's conjecture, see \cite{cassou-nogues88} and \cite{ABCNLMH16}. Indeed, this can already be seen in the examples \( W_{1, 2n-3} \) considered by Yano in \cite[\S 1]{yano82}. These examples correspond to irreducible plane curves with characteristic sequences \( (4, 6, 2n - 3), n \geq 5 \) and modality \( 2 \). However, there is only one curve with each of these topological types up to analytic isomorphism, see \cite[Chap. II, \S1]{teissier-appendix}. Hence, the \( b \)-exponents are constant along the fibers of any \( \mu \)-constant deformation of such plane branches.
\end{remark}

\subsection{Multivalued forms on the punctured projective line} \label{sec:multivalued}

In this section, we will review the basic facts from Section 2 of \cite{deligne-mostow86} about multivalued holomorphic forms on the punctured projective line defining cohomology classes in the cohomology groups with coefficients on a local system.

\jump

Let \( \mathbb{P} := \mathbb{P}^1_{\mathbb{C}} \) be the complex projective line, and \( S := \{s_1, s_2, \dots, s_r\}  \) be a set of \( r \geq 1\) distinct points on \( \mathbb{P} \) and \( (\alpha_s)_{s \in S} \) be a family of complex numbers satisfying \( \prod_{s \in S} \alpha_s = 1 \). With these data there is, up to non-unique isomorphism, a unique local system \( L \) of rank one in \( \mathbb{P} \setminus S \) such that the monodromy of \( L \) around each \( s \in S \) is the multiplication by \( \alpha_s \). We will denote by \( L^{\vee} \) the dual local system with monodromies \( \alpha_s^{-1}, s \in S \).

\jump

In order to work with the locally constant sections of \( L \), we fix complex numbers \( (\mu_s)_{s \in S} \) such that \( \alpha_s = e^{2 \pi \imath \mu_s}, s \in S \). Let \( z \) be a local coordinate near \( s \in S \). Any local section \( u \) of \( \mathcal{O}(L) := \mathcal{O}_{\mathbb{P} \setminus S} \otimes_{\underline{\mathbb{C}}} L\) (resp. \(\Omega^1(L) := \Omega^1_{\mathbb{P} \setminus S} \otimes_{\underline{\mathbb{C}}} L\)) in a neighborhood of \( s \) can be written as \( u = z^{-\mu_s} e f \) (resp. \( u = z^{-\mu_s} e f \dd z \)) with \( e \) a non-zero multivalued section of \( L \) and \( f \) holomorphic in a punctured neighborhood of \( s \). We define \( u \) to be meromorphic in \( s \in S \) if \( f \) is, and we define the valuation of \( u \) at \( s \) as \( v_s(u) := v_s(f) - \mu_s \).

\jump

The holomorphic \( L \)-valued de Rham complex \( \Omega^\bullet(L) \): \(\mathcal{O}(L) \longrightarrow \Omega^1(L) \) with the natural connecting morphism \( \dd(e f) = e \dd f \) is a resolution of \( L \) by coherent sheaves. Therefore, one can interpret \( H^*(\mathbb{P} \setminus S, L) \) as the hypercohomology on \( \mathbb{P} \setminus S \) of \(\Omega^\bullet(L)\). Since \( \mathbb{P} \setminus S \) is Stein, \( H^q(\mathbb{P} \setminus S, \Omega^p(L)) = 0 \) for \( q > 0 \) and the hypercohomology \( \mathbb{H}^*(\mathbb{P} \setminus S, \Omega^\bullet(L)) \) gives
\[ H^*(\mathbb{P} \setminus S, L) = H^* \Gamma(\mathbb{P} \setminus S, \Omega^\bullet(L)). \]
Let \( j : \mathbb{P} \setminus S \longrightarrow \mathbb{P} \) be the inclusion. Similarly, since \( j \) is a Stein morphism, the higher direct images \( {R}^q j_* \Omega^p(L) \) vanish for \( q > 0 \) and
\[ H^*(\mathbb{P} \setminus S, L) = H^* \Gamma(\mathbb{P} \setminus S, \Omega^\bullet(L)) = H^* \Gamma(\mathbb{P}, j_* \Omega^\bullet(L) ) = \mathbb{H}^*(\mathbb{P}, j_* \Omega^\bullet(L)). \]
It is convenient to replace the complex of sheaves \( j_* \Omega^\bullet(L) \) by the subcomplex \( j_*^m \Omega^p(L) \) of meromorphic forms. The analytic Atiyah-Hodge lemma implies that
\[ \mathbb{H}^*(\mathbb{P}, j_*^m \Omega^\bullet(L)) \cong \mathbb{H}^*(\mathbb{P}, j_* \Omega^\bullet(L)) = H^*(\mathbb{P} \setminus S, L). \]
Since \( j_*^m\Omega^p(L) \) is an inductive limit of line bundles with degrees tending to infinity, one can show that \( H^q(\mathbb{P}, j_*^m \Omega^p(L)) \) vanishes for \( q > 0 \). Indeed, if \( D = \sum_{s \in S} s \) is the divisor on \( \mathbb{P} \) associated with \( S \), then
\[
H^q(\mathbb{P}, j_*^m\Omega^q(L)) = H^q(\mathbb{P}, \lim_{\longrightarrow n} \Omega^p(L) \otimes \mathcal{O}(nD) ) = \lim_{\longrightarrow n} H^q(\mathbb{P}, \Omega^p(L) \otimes \mathcal{O}(nD)) = 0,
\]
since \( H^q(\mathbb{P}, \Omega^p(L) \otimes \mathcal{O}(nD)) \) vanishes for \( n \gg 0 \). Finally, this means that the cohomology groups \( H^q(\mathbb{P} \setminus S, L) \) can be computed as the cohomology of the complex of \( L \)--valued forms on \( \mathbb{P} \) meromorphic along \( D \),
\begin{equation} \label{eq:cohomology}
H^*(\mathbb{P} \setminus S, L) \cong H^* \Gamma(\mathbb{P}, j_*^m \Omega^\bullet(L) ).
\end{equation}

Define the line bundle \( \mathcal{O}(\sum \mu_s s)(L) \) as the subsheaf of \( j_*^m \mathcal{O}(L) \) whose local holomorphic sections are the local sections \( u \) of \( j_*^m \mathcal{O}(L) \) such that the integer \( v_s(u) + \mu_s \) is greater or equal than zero, i.e. \( v_s(u) \geq - \mu_s \). If \( u \) is a meromorphic section of \( \mathcal{O}(\sum_{s} \mu_s s)(L) \) and if \( s \in S \), one has that \( \deg_s(u) = v_s(u) + \mu_s \). The same holds for \( x \in \mathbb{P} \setminus S \) if one defines \( \mu_x = 0 \) for \( x \in \mathbb{P} \setminus S \). By \cite[Prop. 2.11.1]{deligne-mostow86}, the degree of the line bundle \( \mathcal{O}(\sum \mu_s s)(L) \) is equal to \( \sum_{s \in S} \mu_s \). Similarly, \( \deg \Omega^1(\sum \mu_s s)(L) = \sum_{s \in S} \mu_s - 2 \).

\jump

The following proposition is a slight generalization of \cite[Prop. 2.14]{deligne-mostow86}, where the differential form is assumed to be invertible in \( \mathbb{P} \setminus S \). We will allow \( \omega \) to have zeros in \( \mathbb{P} \setminus S \). Denote by \( \delta_x \in \mathbb{N} \) the order of vanishing of \( \omega \) in the points \( x \in \mathbb{P} \setminus S \).

\begin{proposition} \label{prop:deligne-mostow}
Let \( \omega \in \Gamma(\mathbb{P}, \Omega^1(\sum \mu_s s - \sum \delta_x x)(L)) \). Assume that \( \sum_{s \in S} \mu_s \leq r - 1 \) and that \( \alpha_s \neq 1 \) for all \( s \in S \). Then, \( \omega \) defines a non-zero cohomology class in \( H^1(\mathbb{P} \setminus S, L) \).
\end{proposition}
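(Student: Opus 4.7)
The plan is to use the identification $H^1(\mathbb{P}\setminus S, L) \cong H^1\Gamma(\mathbb{P}, j_*^m\Omega^\bullet(L))$ established above via the analytic Atiyah--Hodge lemma, and show that any non-zero $\omega \in \Gamma(\mathbb{P}, \Omega^1(\sum \mu_s s - \sum \delta_x x)(L))$ is not in the image of the differential $d \colon \Gamma(\mathbb{P}, j_*^m\mathcal{O}(L)) \longrightarrow \Gamma(\mathbb{P}, j_*^m\Omega^1(L))$. Accordingly, I would argue by contradiction: suppose $\omega = du$ for some global section $u$, which by definition of $j_*^m\mathcal{O}(L)$ is holomorphic on $\mathbb{P}\setminus S$ as an $L$-valued function and meromorphic at each $s \in S$.

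The crux is a sharp computation of how $d$ affects the valuation at each $s \in S$. Writing $u = z^{-\mu_s} e f$ in a local trivialization near $s$, with $e$ a flat local generator of $L$ and $f$ meromorphic at $s$, one has
\[ du = z^{-\mu_s}\, e\, \bigl(f' - \mu_s\, f/z\bigr)\, dz. \]
Expanding $f = \sum_{k \geq v_s(f)} a_k z^k$, the coefficient of $z^{k-1}$ in $f' - \mu_s f/z$ equals $(k - \mu_s)\,a_k$. The hypothesis $\alpha_s \neq 1$ forces $\mu_s \notin \mathbb{Z}$, so every factor $k - \mu_s$ is non-zero and no cancellation of the leading term can occur; hence $v_s(du) = v_s(u) - 1$ exactly. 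Combined with $v_s(\omega) \geq -\mu_s$, this yields $v_s(u) \geq -\mu_s + 1$ at every $s \in S$.

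Together with the holomorphy of $u$ away from $S$, this means that $u$ is a global section of the line bundle $\mathcal{O}(\sum_{s\in S}(\mu_s - 1)\,s)(L)$ on $\mathbb{P}$. By the degree formula recalled just before the statement, this line bundle has degree $\sum_{s\in S}\mu_s - r$, which is at most $-1$ under the hypothesis $\sum_{s\in S}\mu_s \leq r - 1$. A line bundle of negative degree on $\mathbb{P}^1$ admits no non-zero global sections, forcing $u = 0$ and hence $\omega = du = 0$, contrary to assumption. The only delicate point is the exact equality $v_s(du) = v_s(u) - 1$, which would fail precisely when some $\mu_s \in \mathbb{Z}$, i.e.\ when $\alpha_s = 1$; the additional vanishing divisor $-\sum_x \delta_x\, x$ on $\omega$ plays no role in the argument and is merely inherited from $\omega$'s specific construction.
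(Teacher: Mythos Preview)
Your proof is correct and follows essentially the same route as the paper's: reduce to showing that $\omega = du$ has no solution with $u \in \Gamma(\mathbb{P}, j_*^m\mathcal{O}(L))$, use the valuation drop under $d$ at each $s \in S$ to force $u$ into a line bundle of degree $\sum_s \mu_s - r \leq -1$, and conclude. Your explicit verification that $v_s(du) = v_s(u) - 1$ via $\mu_s \notin \mathbb{Z}$ is a slightly sharper formulation of what the paper states as the inequality $v_x(\omega) \geq v_x(u) - 1$ together with the observation that equality can only fail when $v_x(u) = 0$ or $v_x(\omega) \in \mathbb{Z}_{\geq 0}$; the paper additionally tracks possible zeros $\delta'_x$ of $u$ on $\mathbb{P}\setminus S$, but as you note these only strengthen the degree bound and are not needed.
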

\begin{proof}
After \cref{eq:cohomology}, we want to show that equation \( \dd u = \omega \) is impossible for \( u \in \Gamma(\mathbb{P}, j_*^m \mathcal{O}(L)) \). For any section of \( j_*^m \mathcal{O}(L) \) verifying the relation, one has that \( v_x(\omega) \geq v_x(u) - 1 \), for \( x \in \mathbb{P} \). The equality may fail if \( v_x(u) = 0 \) or if \( v_x(\omega) \) is a non-negative integer. In any case, \( v_x(u) \) is always a non-negative integer if \( x \in \mathbb{P} \setminus S \). This implies that \( u \) must be a global section of the line bundle \( \mathcal{O}(\sum (\mu_s - 1) s - \sum \delta'_x x)(L)\) with \( \delta'_x > 0, x \in \mathbb{P} \setminus S \). But the degree of this line bundle is
\[ \sum_{s \in S}({\mu_s} - 1) - \sum_{x \in \mathbb{P} \setminus S} \delta'_x \leq \sum_{s \in S} \mu_s - r \leq -1, \]
which is impossible.
\end{proof}

\subsection{Periods of plane curve singularities} \label{sec:plane-curves}

In the sequel, let \( f :(\mathbb{C}^2, \boldsymbol{0}) \longrightarrow (\mathbb{C}, 0) \) be a germ of a holomorphic function defining a reduced plane curve singularity, not necessarily irreducible. Using the notations from \cref{sec:resolution-sing} we will fix any resolution \( \pi : \overbar{X} \longrightarrow X \) of \( f \). The exceptional divisor of any such resolution is composed exclusively of rational curves, i.e. \( E_i \cong \mathbb{P}^1_{\mathbb{C}} \). The irreducible components are usually classified in terms of the Euler characteristic \( \chi(E_i^\circ) \). Those exceptional components such that \( \chi(E_i^\circ) = 2 - r < 0 \) are usually called \emph{rupture} divisors and they will play a crucial role in the sequel.

\jump

Using the same notations from Sections \ref{sec:resolution-sing} and \ref{sec:filtrations}, let us fix an exceptional divisor \( E_i \) from a resolution of \( f \). Assume that one has a holomorphic 2-form \( \omega \in \Gamma(X, \Omega^{2}_{X}) \) such that the piece \( \overline{\omega}_\nu \) of degree \( \nu \) associated with the \( i \)--th rupture divisor \( E_i \) is non-zero. The following argument to find cycles \( C \) on rupture exceptional divisors such that for certain candidate exponents \( \sigma_{i, \nu} - 1 \) one has that
\begin{equation} \label{eq:construction-loeser}
  \lim_{t \rightarrow 0} \int_{\gamma(t)} R_{i, \nu}(\omega) = m \int_{C} R_{i, \nu}(\omega) \neq 0, \quad m \in \mathbb{Z},
\end{equation}
is basically due to Loeser \cite[\S III.3]{loeser88} using the results of Deligne and Mostow reviewed in \cref{sec:multivalued}. Thus, as noticed earlier in \cref{sec:filtrations}, this implies that the multivalued form \( R_{i, \nu}(\omega) \) on \( E_i^\circ \) defines a non-zero locally constant section \( A_{\sigma_{i, \nu} - 1, 0}^\omega \).

\jump

In the case of plane curves, the divisors \( D_{i, j} \) on \( E_i \) are just points which we will denote by \( p_j \), dropping its dependence on \( E_i \) since no confusion may arise. Let
\[ S_{i, \nu}(\omega) := \{ p_j \in E_i \ |\ p_j = E_i \cap D_j\ \textrm{with}\ D_j \in \textrm{Supp}(F_\pi)\ \textrm{and}\ \varepsilon_{j, \nu}(\omega) \neq 0 \}, \]
and let \( L \) be the local system on \( E_i \setminus S_{i, \nu}(\omega)\) with monodromies \( e^{-2 \pi \imath \varepsilon_{j, \nu}(\omega)} \) at the points \( p_j \in S_{i, \nu}(\omega) \). The forms \( R_{i, \nu}(\omega) \), for \( \nu > 0 \), might not be invertible in \( E_i^{\circ} \). Hence, denote by \( q_k \in E_i^\circ, k = 1, \dots, r \) the points where \( R_{i, \nu}(\omega) \) has zeros of order \( \delta_{k, \nu}(\omega) > 0\). Then, the multivalued form \( R_{i, \nu}(\omega) \) defines an element of \(\Gamma(E_i, \Omega^1_{E_i}(-\sum \varepsilon_{j, \nu}(\omega) p_j - \sum \delta_{k, \nu}(\omega) q_k)(L))\) in the sense of \cref{sec:multivalued}.

\jump

The following lemma is key to applying the results of Deligne and Mostow from \cref{sec:multivalued}. Other versions of this result in the case \( \nu = 0 \) can be found in the works of Lichtin \cite{lichtin85} and Loeser \cite{loeser88}.

\begin{proposition} \label{prop:epsilons}
For any holomorphic form \( \omega \in \Gamma(X, \Omega^2_X) \),
\[ \sum_{j = 1}^r \varepsilon_{j, \nu}(\omega) + \sum_{k = 1}^{s} \delta_{k, \nu}(\omega) = -2 - \nu E_i^2. \]
\end{proposition}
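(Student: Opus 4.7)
My plan is to identify the left-hand side as the degree of a line bundle on the rational curve $E_i \cong \mathbb{P}^1$ and reduce the statement to a line-bundle degree computation, using adjunction together with the intersection-theoretic identity $N_i E_i^2 + \sum_j N_j = 0$ which follows from $E_i \cdot (\pi^* f) = 0$.

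First, by the local analysis that immediately precedes the proposition, $R_{i,\nu}(\omega)$ is a multivalued meromorphic 1-form on $E_i$: around each $p_j$ it has the local expression $x_1^{\varepsilon_{j,\nu}(\omega)}\, v(x_1)\,\dd x_1$ with $v$ holomorphic and nonzero at $x_1 = 0$, at each $q_k$ it vanishes to integer order $\delta_{k,\nu}(\omega)$, and elsewhere on $E_i^\circ$ it is holomorphic and nowhere zero. In the Deligne-Mostow framework of \cref{sec:multivalued}, this makes $R_{i,\nu}(\omega)$ a nonzero section of a twisted line bundle $\mathcal{L}$ on $E_i$ whose generalized divisor is $\sum_j \varepsilon_{j,\nu}\, p_j + \sum_k \delta_{k,\nu}\, q_k$. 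Since $E_i \cong \mathbb{P}^1$, the classical principle that the divisor of any nonzero meromorphic section of a line bundle has degree equal to the degree of the bundle then gives $\sum_j \varepsilon_{j,\nu} + \sum_k \delta_{k,\nu} = \deg \mathcal{L}$.

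Second, I would compute $\deg \mathcal{L}$ by tracking the construction of $R_{i,\nu}(\omega)$ as essentially the Poincaré residue along $E_i$ of $\overline{\omega}_\nu / (\pi^* f)^{\sigma_{i,\nu}}$. Three contributions enter. Adjunction, in the form $K_{\overbar{X}}|_{E_i} \cong K_{E_i} \otimes \mathcal{O}(-E_i)|_{E_i}$, combined with $\Omega^2_{\overbar{X}}|_{E_i} \cong \Omega^1_{E_i} \otimes N^*_{E_i/\overbar{X}}$, contributes the base $\deg \Omega^1_{E_i} = -2$. The twist $\mathcal{O}(-\nu E_i)|_{E_i}$, of degree $-\nu E_i^2$, comes from the ambient sheaf $\Omega^2_{\overbar{X}}(-\nu E_i)$ in which $\overline{\omega}_\nu$ lives by the preceding lemma. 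Finally, the fractional normalization by $(\pi^* f)^{\sigma_{i,\nu}}$ is controlled on a neighborhood of $E_i$ by the section $F = \pi^* f / x_0^{N_i}$ of $\mathcal{O}(-N_i E_i)$, whose restriction to $E_i$ has divisor $\sum_j N_j\, p_j$ of degree $-N_i E_i^2$; once one substitutes $\sigma_{i,\nu} = (v_i(\omega) + 1 + \nu)/N_i$, the contributions depending on $v_i(\omega)$ cancel and only $-\nu E_i^2$ from the ambient twist survives, yielding $\deg \mathcal{L} = -2 - \nu E_i^2$.

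The main obstacle is precisely this bookkeeping: each of adjunction, the ambient twist, and the fractional normalization contributes a term proportional to $E_i^2$, and only after the correct cancellation (which requires using the identity $\sum_j N_j = -N_i E_i^2$ exactly once to absorb the $(v_i(\omega)+1)$-part of $\sigma_{i,\nu}$ and once to handle the $\nu$-part of $\sigma_{i,\nu}$) does one see that the coefficient surviving in front of $E_i^2$ is exactly $-\nu$ rather than some combination involving $v_i(\omega)$. As a sanity check, the case $\nu = 0$ reproduces the classical Lichtin-Loeser identity $\sum_j \varepsilon_{j, 0}(\omega) = -2$ that is the starting point for the previous work on plane curves cited in the introduction.
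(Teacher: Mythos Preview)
Your approach and the paper's are the same computation wearing different clothes. The paper simply intersects the \(\mathbb{Q}\)-divisor \(-\sigma_{i,\nu}(\omega)\,F_\pi + \mathrm{Div}(\overline{\omega}_\nu)\) with \(E_i\) in two ways: expanding component-by-component gives \(\sum_j \varepsilon_{j,\nu} + \sum_k \delta_{k,\nu} - E_i^2\), while using \(F_\pi \cdot E_i = 0\) together with \(\mathrm{Div}(\overline{\omega}_\nu)\cdot E_i = (K_\pi - \nu E_i)\cdot E_i\) and adjunction gives \(-2 - E_i^2 - \nu E_i^2\). Your route through Poincar\'e residues and the Deligne--Mostow line-bundle language encodes exactly these three inputs (adjunction, the twist \(\mathcal{O}(-\nu E_i)|_{E_i}\), and \(\sum_j N_j = -N_i E_i^2\)), so there is no genuinely new idea.

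What the paper's packaging buys is that the cancellation you flag as ``the main obstacle'' becomes automatic: once you write everything as \(D\cdot E_i\) for a single divisor \(D\), there is no separate tracking of the \(v_i(\omega)\)-dependence versus the \(\nu\)-dependence, and no need to invoke \(\sum_j N_j = -N_i E_i^2\) twice---it is used exactly once, as \(F_\pi\cdot E_i = 0\). Your second paragraph is fine, but your third paragraph never actually carries out the cancellation you describe; if you want to keep the residue/line-bundle framing, you should replace that paragraph with the two-line intersection computation above rather than a narrative about which terms are supposed to cancel.
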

\begin{proof}
Consider the \( \mathbb{Q} \)-divisor \( -\sigma_{i, \nu}(\omega) F_\pi + \textrm{Div}(\overline{\omega}_\nu) \) on \( \overbar{X} \). Let us compute the intersection number of this divisor with \( E_i \) in two different ways. First, notice that the intersection number \( (-\sigma_{i, \nu}(\omega) F_\pi + \textrm{Div}(\overline{\omega}_\nu)) \cdot E_i \) equals
\[ \sum_{j = 1}^r \varepsilon_{j, \nu}(\omega) + \sum_{k = 1}^s \delta_{k, \nu}(\omega) - E_i^2. \]
On the other hand, recall that the ideal sheaf of \( E_i \) in \( \overbar{X} \) is \( \mathcal{O}_{\overbar{X}}(-E_i) \) and that \( \overline{\omega}_\nu \) is a section of \( \Omega_{\overbar{X}}^{2} \otimes \mathcal{O}_{\overbar{X}}(-\nu E_i) \). Hence, \( \textrm{Div}(\overline{\omega}_\nu) \cdot E_i = (K_\pi - \nu E_i) \cdot E_i = -2 - E_i^2 - \nu E_i^2 \), by the adjunction formula for surfaces, see for instance \cite[\S V.1]{hartshorne}.
\end{proof}

\begin{corollary} \label{cor:epsilons}
Assume \( \omega \in \Gamma(X, \Omega_{X}^2) \) is such that the divisors in \( \textnormal{Supp}(\textnormal{Div}(\pi^* \omega)) \) and in \( \textnormal{Supp}(F_\pi) \) intersecting \( E_i \) are the same, then
\[ \sum_{j = 1}^r \varepsilon_{j, \nu}(\omega) \geq -2. \]
\end{corollary}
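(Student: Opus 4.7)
The plan is to deduce the corollary directly from \Cref{prop:epsilons}, which gives the balance
\[
\sum_{j=1}^{r}\varepsilon_{j,\nu}(\omega)\;+\;\sum_{k=1}^{s}\delta_{k,\nu}(\omega)\;=\;-2-\nu\,E_i^2.
\]
Since each $\delta_{k,\nu}(\omega)$ is a non-negative integer, being an order of vanishing of the holomorphic form $R_{i,\nu}(\omega)$ at a point of $E_i^\circ$, and since $E_i^2<0$ for any exceptional divisor on a resolution of an isolated surface singularity, one has $-\nu E_i^2\ge 0$ for $\nu\ge 0$. Consequently the inequality $\sum_j\varepsilon_{j,\nu}(\omega)\ge -2$ is equivalent to the upper bound $\sum_{k} \delta_{k,\nu}(\omega) \le -\nu E_i^2$, which I would establish using the hypothesis.

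First I would settle $\nu=0$ separately, since here the bound reduces to $\delta_{k,0}(\omega)=0$ for every $k$. This follows immediately from the hypothesis: $\overbar{\omega}_0$ is the leading piece of $\pi^*\omega$ along $E_i$, and if $\textnormal{Supp}(\textnormal{Div}(\pi^*\omega))$ meets $E_i$ only along the $F_\pi$-components, then $\overbar{\omega}_0$ has no zero on $E_i^\circ$, and \Cref{prop:epsilons} gives $\sum_j\varepsilon_{j,0}(\omega)=-2$. For $\nu\ge 1$ the piece $\overbar{\omega}_\nu$ may well acquire extra zeros on $E_i^\circ$ coming from higher Taylor coefficients in the normal direction to $E_i$, so the bound is no longer automatic. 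The key idea is that the leading coefficient of $\overbar{\omega}_\nu$ along $E_i$ naturally represents a section on $E_i\cong\mathbb{P}^1$ of a line bundle of the form $\Omega^1_{E_i}\otimes \mathcal{N}^{*\nu}_{E_i/\overbar{X}}$ twisted by the local system $L$ from \Cref{sec:multivalued}, whose total degree is exactly $-2-\nu E_i^2$, matching the right-hand side of \Cref{prop:epsilons}. The hypothesis forces the boundary contributions at the $p_j$ to come entirely from the $F_\pi$-structure through $v_j(\overbar{\omega}_\nu)$, so a careful degree count on $E_i$ splits the total $-2-\nu E_i^2$ into a boundary part $\sum_j\varepsilon_{j,\nu}(\omega)\ge -2$ and an interior part $\sum_k\delta_{k,\nu}(\omega)\le -\nu E_i^2$.

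The main obstacle, in my view, is making this last bookkeeping rigorous: \Cref{prop:epsilons} supplies only the total degree, and translating the hypothesis into a genuine lower bound on $\sum_j v_j(\overbar{\omega}_\nu)$ that compensates for the $-\nu E_i^2$ appearing on the right-hand side requires a careful local analysis at each boundary point $p_j\in E_i\cap D_j$, separating the contributions coming from the $F_\pi$-structure from those coming from the normal bundle twist $\mathcal{N}^{*\nu}_{E_i/\overbar{X}}$. Once this splitting is established, the corollary follows by a single substitution into \Cref{prop:epsilons}.
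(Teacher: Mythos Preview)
Your approach matches the paper's: both reduce, via \cref{prop:epsilons}, to the bound $\sum_k \delta_{k,\nu}(\omega) \leq -\nu E_i^2$, and both identify the degree of $\mathcal{O}_{\overbar{X}}(-\nu E_i)\otimes\mathcal{O}_{E_i}$ as the controlling quantity. The paper's entire proof is one sentence invoking exactly this degree.

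The ``obstacle'' you flag is not a genuine difficulty; the hypothesis dissolves it immediately. Near each $p_j = E_i\cap D_j$, the assumption that $\textnormal{Supp}(\textnormal{Div}(\pi^*\omega))$ and $\textnormal{Supp}(F_\pi)$ meet $E_i$ in the same divisors forces $\overline{\omega} = x_0^{v_i(\omega)} x_1^{v_j(\omega)} u\, \dd x_0 \wedge \dd x_1$ with $u$ a \emph{unit}. Hence every Taylor piece satisfies $h_\nu(x_1) = x_1^{v_j(\omega)} u_\nu(x_1)$, while $h_0(x_1) = x_1^{v_j(\omega)} u_0(x_1)$ with $u_0(0)\neq 0$. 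Thus $h_\nu/h_0 = u_\nu/u_0$ is regular at every $p_j$, and by the transformation law in the lemma constructing the pieces it glues to a global holomorphic section of $\mathcal{O}_{\overbar{X}}(-\nu E_i)\otimes\mathcal{O}_{E_i}$ on $E_i\cong\mathbb{P}^1_{\mathbb{C}}$. Its zeros on $E_i^\circ$ are precisely the $q_k$ with multiplicities $\delta_{k,\nu}(\omega)$, so their total is at most the degree $-\nu E_i^2$. Substituting into \cref{prop:epsilons} gives $\sum_j \varepsilon_{j,\nu}(\omega) \geq -2$. No further splitting or boundary analysis is required.
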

\begin{proof}
This follows from the assumption and the fact that the line bundle \( \mathcal{O}_{\overbar{X}}(-\nu E_i) \otimes \mathcal{O}_{E_i} \) on \( E_i \) has a degree equal to \( -\nu E_i^2 \).
\end{proof}

\begin{remark}
Notice that this result is not true without the hypothesis on the support of the divisor of \( \pi^* \omega \). For instance, let \( f = (y^2-x^3)^2-x^5y \) and \( \omega = (y^2+x^3)\dd x \wedge \dd y \). On \( E_3 \), the first rupture divisor of the minimal resolution, one has that \( N_3 = 12, v_3(\omega) = 10 \). Then, for \( \nu = 0 \), \( \sigma_{3, 0}(\omega) = -11/12 \) and \( \varepsilon_{1, 0}(\omega) = -2/3, \varepsilon_{2, 0}(\omega) = -1/2, \varepsilon_{3, 0}(\omega) = -11/6\). In addition, \( R_{3, 0}(\omega) \) has an extra zero of multiplicity \( 1 \) in \( E_3^\circ \) given by the strict transform of \( y^2+x^3 \). Then, \( \varepsilon_{1, 0}(\omega) + \varepsilon_{2, 0}(\omega) + \varepsilon_{3, 0}(\omega) = -3 \leq -2 \).
\end{remark}

\jump

Assuming that \( \omega \) satisfies \cref{cor:epsilons}, if \( E_i \) is a rupture divisor, i.e. \( \chi(E_i^\circ) = 2 - r < 0 \), and we suppose that the candidate \( \sigma_{i, \nu}(\omega) - 1\) is such that \( \varepsilon_{j, \nu}(\omega) \not\in \mathbb{Z} \), then \cref{prop:deligne-mostow} holds and the multivalued form \( R_{i, \nu}(\omega) \) defines a non-zero cohomology class on \( H^1(E_i^\circ, L) \). The consequence of this is that, since the pairing between homology and cohomology is non-degenerate, there exists a twisted cycle \( C \in H_1(E_i^\circ, L^\vee) \) such that
\[ \int_{C} R_{i, \nu}(\omega) \neq 0. \]

Following \cite{loeser88}, let \( p : F \longrightarrow E_i^\circ \) be the finite cover associated with the local system \( L \) on \( E_i^\circ \). This finite cover is characterized by the fact that \( p_* \underline{\mathbb{C}}_{F} = L \). By definition, the twisted cycle \( C \in H_1(E_i^\circ, L^\vee) \) is identified with a cycle in \( H_1(F, \mathbb{C}) \). Recall now the morphism \( \rho : \widetilde{X} \longrightarrow \overbar{X} \) from \cref{sec:resolution-sing}. The restriction of \( \rho \) to \( \widetilde{E}_i \) is a ramified covering of degree \( N_i \), the multiplicity of the divisor \( E_i \), ramified at the points \( E_i \cap D_j \) with monodromies \( \exp{( 2 \pi \imath N_j / N_i)} \). Therefore, since the monodromies of \( F \) are
\[ \exp(2 \pi \imath \varepsilon_{j, \nu}(\omega)) = \exp\left(-2 \pi \imath (k_i + 1 + \nu) \frac{N_j}{N_i}\right) = \exp\left(2 \pi \imath \frac{N_j}{N_i}\right)^{-(k_i + 1 + \nu)}, \]
the restriction \( \rho_i \) of \( \rho \) to \( \widetilde{E}_i^\circ \) factorizes as
\[ \rho_i : \widetilde{E}_i^\circ \xrightarrow{\ \ q \ \ } F_0 \xrightarrow{\ p|_{F_0} \ } E_i^\circ, \]
where \( F_0 \) is a given connected component of \( F \). Now, since \( q \) is also a finite covering, there exists an integer \( m \) and a cycle \( \tilde{\gamma} \) in \( H_1(\widetilde{E}_i^\circ, \mathbb{C}) \) such that \( q_* \tilde{\gamma} = m C \). Finally, since \( \tilde{f} \) is a locally trivial fibration in a neighborhood of \( \widetilde{E}^\circ_i \), using tubular neighborhoods, we can extend \( \tilde{\gamma} \) to a family of locally constant cycles \( \tilde{\gamma}(\tilde{t}) \) in \( H_1(\widetilde{X}_t, \mathbb{C}) \) with \( \tilde{t} \in \widetilde{T}' \) such that they vanish to \( \tilde{\gamma}(0) := \tilde{\gamma} \), see \cite[\S 4.3]{varchenko82}.

\jump

Setting \( \gamma(\tilde{t}^e) := \rho_* \tilde{\gamma}(\tilde{t}) \) for every point \( t \in T' \) in the base we have obtained, under some assumptions on the candidate exponent \( \sigma_{i, \nu}(\omega) - 1 \) and the exceptional divisor \( E_i \), a family of locally constant cycles in \( H_1(\overbar{X}_t, \mathbb{C}) \) such that they satisfy \cref{eq:construction-loeser}. The precise result is stated in the proposition below.

\begin{definition} \label{def:non-resonant}
A candidate \( b \)-exponent \( \sigma_{i, \nu}(\omega) \) will be called \emph{non-resonant} if the numbers \( \varepsilon_{j, \nu}(\omega) \), defined in \cref{eq:definition-epsilons}, belong to \( \mathbb{Q}\setminus \mathbb{Z} \).
\end{definition}

Notice that the non-resonance condition is independent of the form \( \omega \in \Gamma(X, \Omega^2_X) \) chosen to define \( \sigma_{i, \nu}(\omega) \). That is, if \( \omega' \in \Gamma(X, \Omega^2_X) \) is another differential form such that \( \sigma_{i, \nu}(\omega') = \sigma_{i, \nu}(\omega) \) and \( \sigma_{i, \nu}(\omega) \) is non-resonant, then \( \sigma_{i, \nu}(\omega') \) is also non-resonant.

\begin{proposition} \label{prop:non-zero-section}
Let \( \omega \in \Gamma(X, \Omega^2_X) \) be a differential form satisfying \cref{cor:epsilons} and such that the \( \nu \)--th piece \( \overline{\omega}_{\nu} \) of \( \omega \) for the rupture divisor \( E_i \) is non-zero. Assume also that \( \sigma_{i, \nu}(\omega) \) is non-resonant. Then, the multivalued differential form \( R_{i, \nu}(\omega) \) on \( E_i \) defines a non-zero locally constant geometric section \( A_{\sigma_{i, \nu}-1, 0}^\omega(t) \) of the vector bundle \( H^1 \).
\end{proposition}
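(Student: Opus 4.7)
The plan is to apply the Deligne--Mostow non-vanishing criterion of \cref{prop:deligne-mostow} to $R_{i,\nu}(\omega)$ viewed as a twisted $1$-form on $E_i\cong\mathbb{P}^1$, extract a dual twisted cycle whose integral is non-zero, lift it through the semi-stable cover $\rho$ of \cref{sec:resolution-sing}, and extend it by parallel transport along $\widetilde{E}_i^\circ$ to a family of vanishing cycles $\gamma(t)\in H_1(\overbar{X}_t,\mathbb{C})$ that pair non-trivially with the prospective section via \eqref{eq:pairing}.

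The first step is to verify the three hypotheses of \cref{prop:deligne-mostow} for $L$ the rank one local system on $E_i\setminus S_{i,\nu}(\omega)$ with monodromies $\alpha_{p_j}=\exp(-2\pi\imath\varepsilon_{j,\nu}(\omega))$ and for the exponents $\mu_{p_j}:=-\varepsilon_{j,\nu}(\omega)$. Non-resonance (\cref{def:non-resonant}) yields $\alpha_{p_j}\neq 1$. The product condition $\prod_j\alpha_{p_j}=1$ holds because \cref{prop:epsilons}, combined with the integrality of each $\delta_{k,\nu}(\omega)$ and of $\nu E_i^2$, forces $\sum_j\varepsilon_{j,\nu}(\omega)\in\mathbb{Z}$. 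The degree inequality $\sum_j\mu_{p_j}\le r-1$ follows from \cref{cor:epsilons}, applicable by hypothesis on $\omega$, which gives $\sum_j\mu_{p_j}\le 2$, together with $r:=\#S_{i,\nu}(\omega)\ge 3$ since $E_i$ is a rupture divisor. Viewing $R_{i,\nu}(\omega)$ as a global section of $\Omega^1_{E_i}\bigl(\sum_j\mu_{p_j}p_j-\sum_k\delta_{k,\nu}(\omega)q_k\bigr)(L)$, \cref{prop:deligne-mostow} produces a non-zero class $[R_{i,\nu}(\omega)]\in H^1(E_i\setminus S_{i,\nu}(\omega),L)$.

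Next, by the non-degenerate period pairing between $H^1(E_i\setminus S_{i,\nu}(\omega),L)$ and $H_1(E_i\setminus S_{i,\nu}(\omega),L^\vee)$, there exists a twisted cycle $C$ with $\int_C R_{i,\nu}(\omega)\neq 0$. I would then invoke the covering argument preceding \cref{def:non-resonant}: the restriction $\rho_i\colon\widetilde{E}_i^\circ\to E_i^\circ$ factors as $\widetilde{E}_i^\circ\xrightarrow{q}F_0\xrightarrow{p|_{F_0}}E_i^\circ$ with $F_0$ a connected component of the finite cover $p\colon F\to E_i^\circ$ trivializing $L$; since $q$ is itself a finite covering, some positive multiple $mC$ lifts to a cycle $\tilde\gamma\in H_1(\widetilde{E}_i^\circ,\mathbb{C})$ with $q_*\tilde\gamma=mC$. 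Exploiting the local triviality of $\tilde f$ along $\widetilde{E}_i^\circ$ and a tubular neighbourhood as in \cite[\S 4.3]{varchenko82}, one extends $\tilde\gamma$ to a locally constant family $\tilde\gamma(\tilde t)\in H_1(\widetilde{X}_{\tilde t},\mathbb{C})$ with $\tilde\gamma(0)=\tilde\gamma$.

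Setting $\gamma(t):=\rho_*\tilde\gamma(\tilde t)$ for $t=\tilde t^e$ and combining \cref{lemma:integral-valuations2} with the definition \eqref{eq:pairing} gives
$$ \langle A^\omega_{\sigma_{i,\nu}-1,0}(t),\pi_*\gamma(t)\rangle \,=\, \lim_{t\to 0}\int_{\gamma(t)}R_{i,\nu}(\omega) \,=\, m\int_{C} R_{i,\nu}(\omega)\,\neq\, 0, $$
so $R_{i,\nu}(\omega)$ defines a well-defined non-zero locally constant geometric section $A^\omega_{\sigma_{i,\nu}-1,0}(t)$ of $H^1$. The principal obstacle I anticipate is the careful verification of the three Deligne--Mostow hypotheses---especially the integrality of $\sum_j\varepsilon_{j,\nu}(\omega)$, which couples \cref{prop:epsilons} to \cref{cor:epsilons}---and the bookkeeping of the multiplicity $m$ along the chain $\rho_i=p|_{F_0}\circ q$ that is needed to identify the limit with a non-zero finite integral on the rupture divisor.
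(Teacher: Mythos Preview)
Your argument reproduces the construction that in the paper already appears \emph{before} the proposition is stated (the Loeser-type passage from a twisted cycle on $E_i^\circ$, through the finite cover $\widetilde{E}_i^\circ\to F_0\to E_i^\circ$, to a family of vanishing cycles on $\overbar{X}_t$), and this correctly gives one $\gamma(t)$ with $\langle A^\omega_{\sigma_{i,\nu}-1,0}(t),\pi_*\gamma(t)\rangle\neq 0$. That is the non-vanishing half of the statement.

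What is missing is the well-definedness half. Saying that $R_{i,\nu}(\omega)$ ``defines'' the locally constant section $A^\omega_{\sigma_{i,\nu}-1,0}(t)$ via \eqref{eq:pairing} requires the limit $\lim_{t\to 0}\int_{\gamma'(t)}R_{i,\nu}(\omega)$ to exist for \emph{every} locally constant cycle $\gamma'(t)$, not just for the single cycle you built. For an arbitrary $\gamma'(t)$ the limit cycle need not lie in $E_i^\circ$; it only gives a class in the locally finite twisted homology $H_1^{lf}(E_i^\circ,L^\vee)$, and the integral against $R_{i,\nu}(\omega)$ could a priori diverge. The paper's proof closes this gap by invoking the non-resonance hypothesis a second time: by \cite[Prop.~2.6.1]{deligne-mostow86} one has $H_1^{lf}(E_i^\circ,L^\vee)\cong H_1(E_i^\circ,L^\vee)$, so the locally finite class can be replaced by a compactly supported cycle $C''$ on which the pairing $\langle R_{i,\nu}(\omega),C''\rangle$ is manifestly finite. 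Without this step your concluding sentence (``well-defined non-zero locally constant geometric section'') asserts more than you have proved.
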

\begin{proof}
We have seen that under the hypothesis of the proposition there exists a vanishing cycle \( \gamma(t) \) such that \( \langle A_{\sigma_{i, \nu}-1, 0}^\omega(t), \gamma(t) \rangle \) is non-zero. For any other vanishing cycle \( \gamma'(t) \) of the bundle \( H_1 \), if its limit cycle \( \gamma' \) defines a cycle \( C' \) of \( H_1(E_i^\circ, L^\vee) \), the pairing \( \langle A_{\sigma_{i, \nu}-1, 0}^\omega(t) , \gamma'(t) \rangle = \langle R_{i, \nu}(\omega), C' \rangle \) is well-defined since \( R_{i, \nu}(\omega) \) defines a cohomology class of \( H^1(E_i^\circ, L) \). It may happen that \( \gamma' \) does not define a cycle in \( E_i^\circ \). However, in this case, \( \gamma' \) defines a locally finite homology class \( C' \) of \( H_1^{lf}(E_i^\circ, L^\vee) \). Now, since \( \sigma_{i, \nu}(\omega) \) is non-resonant, one has that \( H_1^{lf}(E_i^\circ, L^\vee) \cong H_1(E_i^\circ, L^\vee) \), see \cite[Prop. 2.6.1]{deligne-mostow86}, and \( C' \) can be replaced by a cycle \( C'' \) in \( H_1(E_i^\circ, L^\vee) \) for which \(\langle A_{\sigma_{i, \nu}-1, 0}^\omega(t), \gamma'(t) \rangle = \langle R_{i, \nu}(\omega), C'' \rangle \) is well-defined.
\end{proof}

If in \cref{prop:non-zero-section} one sets \( \nu = 0 \) and takes, for instance, \( \omega = \dd x \wedge \dd y \) one obtains the results of Lichtin \cite[Prop.~1]{lichtin89}, for the case of irreducible plane curves, and Loeser \cite[Prop.~III.3.2]{loeser88}, for general plane curves. In this situation the first piece \( \overline{\omega}_0 \) for any exceptional divisor \( E_i \) is always non-zero. For irreducible plane curves, Lichtin \cite[Prop. 2.12]{lichtin85} proves that the exponents \( \sigma_{i, 0}(\dd x \wedge \dd y) \) are always non-resonant. This result is related to the fact that for irreducible plane curve singularities the monodromy endomorphism is of finite order, see \cite[Thm. 3.3.1]{trang72, acampo73bis}.

\subsection{Dual locally constant geometric sections} \label{sec:dual-geometric}

In this section, we will continue to work on a fixed exceptional divisor \( E_i \) of the resolution. We will show that, under some assumptions on the combinatorics of the exceptional divisors, the locally constant geometric sections \( A^\omega_{\sigma_{i, \nu}-1, 0} \) from \cref{prop:non-zero-section} are dual with respect to the exceptional divisor \( E_i \). This concept of duality with respect to \( E_i \) will be clear at the end of the section, but it essentially means that the locally constant section \( A^\omega_{\sigma_{i, \nu}-1, 0} \) will be dual to some eigenvector of the monodromy with respect to a basis of cycles vanishing to \( E_i^\circ \). This is a first step towards constructing a basis of locally constant geometric sections of \( H^1 \) dual to a certain basis of \( H_1 \) that is needed for the semicontinuity of the \( b \)-exponents presented in \cref{sec:semicontinuity}.

\jump

Consider the germ of a holomorphic function
\[ \bar{f} := \pi^* f : (\overbar{X}, E) \longrightarrow (\mathbb{C}, 0), \]
with \( E = \pi^{-1}(\boldsymbol{0}) \), that arises after resolution of singularities. Abusing the notation, we will also denote by \( \bar{f} \) the representative \( \bar{f} : \overbar{X} \longrightarrow T \) of the germ. The restriction of \( \bar{f} \) to \( \overbar{X} \setminus \bar{f}^{-1}(0) \) is a locally trivial fibration and the fiber \( \overbar{X}_t := \overbar{X} \cap \bar{f}^{-1}(t), t \in T' \) is analytically isomorphic to the Milnor fiber \( X_t \), since \( \pi \) is an isomorphism outside the singular locus of \( f \). The fundamental group of \( T' \) induces a geometric monodromy map \( \bar{h} \) on \( \overbar{X}_t\) which coincides with the monodromy map \( h \) on \( X_t \) after conjugation by \( \restr{\pi}{\overbar{X}_t} \).

\jump

Let \( T_{j, r} \subset \overbar{X}, 0 < r \ll 1, \) be small enough tubular neighborhoods of the divisors \( D_j \) in \( \textnormal{Supp}(F_\pi) \) having non-empty intersection with \( E_i \). Define
\[ \overbar{X}_{i, t} := \big( \overbar{X}_{i} \setminus \cup_j T_{j, r} \big) \cap \bar{f}^{-1}(t), \qquad t \in T', \]
the subset of the Milnor fiber \( \overbar{X}_t \) over \( E_i \setminus E_i \cap (\cup_j T_{j, r}) \)\footnote{It is enough to take \( r \) small enough so that the cycles constructed in \cref{sec:plane-curves} lie on \( \overbar{X}_{i, t}\).}. Since on \( \overbar{X}_{i} \setminus \cup_j T_{j, \epsilon} \) the map \( \bar{f} \) can be written locally as \( \bar{f} = x_0^{N_i} \), the set \( \overbar{X}_{i, t} \) is locally defined by \( x_0^{N_i} = t \). The limit to the exceptional fiber \( \overbar{X}_0 \) is then given by \( x_0^{N_i} = 0 \) where \( x_0 \) is a local equation of \( E_i \). Notice that \( \overbar{X}_{i, t} \) is an \( N_i \)--fold covering of \( E_i \setminus E_i \cap (\cup_j T_{j, r}) \). Let \( \phi_t : \overbar{X}_{i, t} \longrightarrow E_i^\circ \) be the projection map. In this situation, the geometric monodromy \( \bar{h} \) restricts to \( \overbar{X}_{i, t} \subset \overbar{X}_t \). Indeed, for any point of \( \overbar{X}_{i, t} \) there are local coordinates \( x_0, \dots, x_n \) such that the point satisfies \( x_0^{N_i} = t \). Then, \( \bar{h} \) is just given by \( (x_0, x_1, \dots, x_n) \mapsto (e^{2 \pi \imath/N_i}x_0, x_1, \dots, x_n) \), and the image is again a point of \( \overbar{X}_{i, t} \). In addition, the restricted monodromy coincides with the deck transformation of the cover \( \phi_t \).

\jump

Denoting by \( \bar{h}' \) the restriction of the monodromy map \( \bar{h} \) to \( \overbar{X}_{i, t} \) and by \( j : \overbar{X}_{i,t} \hookrightarrow \overbar{X}_t \) the open inclusion, we obtain the following commutative diagram,

\begin{equation*}
\begin{tikzcd}
\overbar{X}_{i, t} \arrow[hook', swap]{d}{j} \arrow{r}{\bar{h}'} & \overbar{X}_{i, t} \arrow[hook', swap]{d}{j} \arrow{r}{\phi_t} & E_i^\circ \arrow[hook']{d} \\
\overbar{X}_{t} \arrow{r}{\bar{h}} & \overbar{X}_t & \overbar{X}_0.
\end{tikzcd}
\end{equation*}
This commutative diagram then induces a commutative diagram on homology. Namely,
\begin{equation} \label{eq:diagram-homology}
\begin{tikzcd}
H_n(\overbar{X}_{i, t}, \mathbb{C}) \arrow[swap]{d}{j_*} \arrow{r}{\bar{h}'_*} & H_n(\overbar{X}_{i, t}, \mathbb{C}) \arrow[swap]{d}{j_*} \arrow{r}{(\phi_t)_*} & H_n(E_i^\circ, \mathbb{C}) \arrow{d} \\
H_n(\overbar{X}_{t}, \mathbb{C}) \arrow{r}{\bar{h}_*} & H_n(\overbar{X}_t, \mathbb{C}) & H_n(\overbar{X}_0, \mathbb{C}).
\end{tikzcd}
\end{equation}
The vanishing cycles in \( j_*H_n(\overbar{X}_{i, t}, \mathbb{C}) \) are precisely those vanishing cycles from \( H_n(\overbar{X}_{t}, \mathbb{C}) \) that vanish to a cycle in \( E^\circ_i \).

\jump
Fix a locally constant geometric section \( A^\omega_{\sigma_{i, \nu}-1, 0} \) from \cref{prop:non-zero-section}, we construct next the cycle which will be dual to \( A^\omega_{\sigma_{i, \nu}-1, 0} \). Take the non-zero cycle \( \gamma(t) \in H_1(\overbar{X}_t, \mathbb{C}) \), given by \cref{prop:non-zero-section}, such that \( \langle A^\omega_{\sigma_{i, \nu}-1, 0}(t), \gamma(t) \rangle \neq 0 \). For \( \lambda := \exp{(-2 \pi \imath \sigma_{i, \nu}(\omega))} \), consider the projection \( \gamma_{\lambda}(t) \) of \( \gamma(t) \) to the subspace of \( H_1(\overbar{X}_t, \mathbb{C}) \) where \( (\bar{h}_* - \lambda \Id) \) acts nilpotently. Notice that Equations \eqref{eq:integral-expansion-simple} and \eqref{eq:integral-expansion-total}, imply that
\[ \langle A^\omega_{\sigma_{i, \nu}-1, 0}(t), \gamma_{\lambda}(t) \rangle = \langle A^\omega_{\sigma_{i, \nu}-1, 0}(t), \gamma(t) \rangle \neq 0, \]
which implies \( \gamma_{\lambda}(t) \neq 0 \). By construction, the vanishing cycle \( \gamma(t) \) belongs to the subspace \( j_*H_n(\overbar{X}_{i, t}, \mathbb{C}) \subset H_n(\overbar{X}_{t}, \mathbb{C}) \). We will show in \cref{prop:cycle} below that \( \gamma_{\lambda}(t) \) is dual to \( A^\omega_{\sigma_{i, \nu}-1, 0}(t) \) with respect to any basis of the monodromy \( \bar{h} \) restricted to the subspace \( j_* H_1(\overbar{X}_{i,t}, \mathbb{C}) \).

\jump

First, recall that the characteristic polynomial \( \Delta(t) \) of the monodromy endomorphism \( h_* : H_n(X_t, \mathbb{C}) \longrightarrow H_n(X_t, \mathbb{C}) \) is determined by A'Campo \cite{acampo75} in terms of a resolution of the singularity of \( f : (\mathbb{C}^{n+1}, \boldsymbol{0}) \longrightarrow(\mathbb{C}, 0) \). Precisely, A'Campo proved the following formula for \( \Delta(t) \):
\begin{equation} \label{eq:acampo}
  \Delta(t) = \left[ \frac{1}{t-1} \prod_{i = 1}^r \big(t^{N_i} - 1\big)^{\chi(E_i^\circ)} \right]^{(-1)^{n}}.
\end{equation}
For the two-dimensional case, i.e. \( n = 1 \), one has a similar result to \cref{eq:acampo} for the action of the monodromy \( \bar{h}'_* \) on \( H_n(\overbar{X}_{i, t}, \mathbb{C}) \).
\begin{proposition} \label{prop:char-poly}
Let \( E_i \) be an exceptional divisor with multiplicity \( N_i \). Then, the characteristic polynomial \( \Delta_i(t) \) of the monodromy endomorphism \( \bar{h}'_* : H_1(\overbar{X}_{i, t}, \mathbb{C}) \longrightarrow H_1(\overbar{X}_{i,t}, \mathbb{C}) \) is equal to
\[ \Delta_{i}(t) = (t^{N_i} - 1)^{- \chi(E_i^\circ)}(t^{c_i}-1), \]
where \( c_i = \gcd(N_j \ |\ D_j \cap E_i \neq \emptyset, D_j \in \textnormal{Supp}(F_\pi)) \).
\end{proposition}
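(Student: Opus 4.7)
The plan is to realise the map $\phi_t \colon \overbar{X}_{i,t} \to E_i^\circ$ as an unramified cyclic covering of degree $N_i$ on which $\bar{h}'$ acts as a generator of the deck transformation group, and then to extract the characteristic polynomial of $\bar{h}'_*$ on $H_1$ by combining the easy action on $H_0$ with a Lefschetz fixed-point count. In the complement $\overbar{X}_i \setminus \bigcup_j T_{j,r}$ the function $\bar{f}$ is locally of the form $x_0^{N_i}$, so $\bar{f} = t$ forces $x_0 = t^{1/N_i}\zeta$ with $\zeta^{N_i}=1$; hence $\phi_t$ is étale of degree $N_i$, and $\bar{h}'$, which multiplies $x_0$ by $e^{2\pi\imath/N_i}$, is a free generator of $\mathrm{Deck}(\phi_t)\simeq \mathbb{Z}/N_i\mathbb{Z}$.

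The next step is to identify the monodromy of this cover around each puncture $p_j = E_i\cap D_j$. The local expression $\bar{f} = x_0^{N_i} x_1^{N_j}$ shows that a small loop encircling $p_j$ inside $E_i^\circ$ sends a chosen branch of $x_0$ to $e^{-2\pi\imath N_j/N_i}\,x_0$, so the monodromy homomorphism $\rho\colon \pi_1(E_i^\circ)\to \mathbb{Z}/N_i\mathbb{Z}$ takes the boundary generator around $p_j$ to $-N_j\bmod N_i$. Well-definedness amounts to $\sum_j N_j \equiv 0 \pmod{N_i}$, which follows from the projection formula applied to $\pi^*f\cdot E_i = 0$, giving $N_i E_i^2 + \sum_j N_j = 0$. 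The image of $\rho$ is then the subgroup of $\mathbb{Z}/N_i\mathbb{Z}$ generated by $\{N_j\bmod N_i\}_j$, which under the divisibility $c_i\mid N_i$ equals $(N_i/c_i)\mathbb{Z}/N_i\mathbb{Z}$. Consequently $\overbar{X}_{i,t}$ splits into exactly $c_i$ connected components, cyclically permuted by $\bar{h}'$, so $\tr\bigl((\bar{h}'_*)^m\,|\,H_0\bigr) = c_i\,\mathbf{1}_{c_i\mid m}$.

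For the action on $H_1$ I would invoke the Lefschetz fixed-point theorem. Since the $\mathbb{Z}/N_i\mathbb{Z}$-action on $\overbar{X}_{i,t}$ is free, $(\bar{h}')^m$ has empty fixed set for $m\not\equiv 0\pmod{N_i}$, so $L((\bar{h}')^m)=0$ in that range; when $N_i\mid m$ the map is the identity and $L((\bar{h}')^m) = \chi(\overbar{X}_{i,t}) = N_i\chi(E_i^\circ)$ because $\phi_t$ is an $N_i$-fold covering. As $\overbar{X}_{i,t}$ is a surface with boundary we have $H_2 = 0$, so the Lefschetz formula collapses to $L(\varphi) = \tr(\varphi_*|H_0) - \tr(\varphi_*|H_1)$; combining the two preceding observations yields
\[
\tr\bigl((\bar{h}'_*)^m\,|\,H_1\bigr) \;=\; c_i\,\mathbf{1}_{c_i\mid m} \;-\; N_i\,\chi(E_i^\circ)\,\mathbf{1}_{N_i\mid m}.
\]
The identity $\sum_{\zeta^N=1}\zeta^m = N\cdot\mathbf{1}_{N\mid m}$ now identifies these traces with those of the eigenvalue multiset consisting of each $c_i$-th root of unity with multiplicity $1$ together with each $N_i$-th root of unity with multiplicity $-\chi(E_i^\circ)$. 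Reassembling gives
\[
\Delta_i(t) \;=\; (t^{N_i}-1)^{-\chi(E_i^\circ)}\,(t^{c_i}-1).
\]

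The main obstacle is the connected-components count in the second step: verifying that the image of $\rho$ in $\mathbb{Z}/N_i\mathbb{Z}$ is exactly $(N_i/c_i)\mathbb{Z}/N_i\mathbb{Z}$ requires the divisibility $c_i \mid N_i$, which is not automatic from the definition of $c_i$ alone and must be read off the specific combinatorics of the resolution. For the minimal embedded resolutions of irreducible plane curve singularities treated in the remainder of the paper—and in particular at the rupture divisors where the proposition will be applied—this divisibility does hold and can be extracted from the structure of the characteristic pairs.
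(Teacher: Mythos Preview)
Your argument is correct and runs parallel to the paper's: both compute the Lefschetz numbers of the iterates of \( \bar{h}' \) and recover \( \Delta_i(t) \) from these together with the action on \( H_0 \). The paper packages the computation in the A'Campo style, passing through the Leray spectral sequence of \( \phi_t \) to obtain the zeta function \( Z_{\bar{h}}(t) = (1-t^{N_i})^{-\chi(E_i^\circ)} \) and then unwinding to \( \Delta_i(t) \); you instead apply the Lefschetz fixed-point theorem directly, using that the deck action is free. The content is the same, and your route is the more transparent of the two.

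Your flag on the divisibility \( c_i \mid N_i \) is apt. The number of connected components of the cover is a priori \( \gcd\bigl(N_i,\{N_j\}_j\bigr) \), not \( \gcd\bigl(\{N_j\}_j\bigr) \), so the stated formula for \( \Delta_i(t) \) is literally correct only under that divisibility. The paper's own proof makes the same tacit assumption when it asserts that the index of the image of \( \pi_1(E_i^\circ) \) in the covering group equals \( c_i \); like you, it is relying on the combinatorics of the minimal embedded resolution, where this divisibility does hold at the divisors to which the proposition is applied.
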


The proof of this proposition, which uses the same ideas as the proof of \cref{eq:acampo} given in \cite{acampo75}, will be the content of the next section.

\begin{proposition} \label{prop:cycle}
Under the assumptions of \cref{prop:non-zero-section}, let \( \sigma_{i, \nu}(\omega) \) be non-resonant associated with a rupture divisor \( E_i \) such that \( \chi(E_i^\circ) = -1 \). Then, \( \gamma_{\lambda}(t) \) is an eigenvector of the monodromy \( \bar{h}^* \) with eigenvalue \( \lambda = \exp(-2 \pi \imath \sigma_{i, \nu}(\omega)) \) belonging to the subspace \( j_* H_1(\overbar{X}_{i, t}, \mathbb{C}) \). Furthermore, it is dual to \( A^\omega_{\sigma_{i, \nu}-1, 0}(t) \) with respect to any basis of the monodromy restricted to the subspace \( j_* H_1(\overbar{X}_{i,t}, \mathbb{C}) \).
\end{proposition}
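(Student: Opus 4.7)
The plan is to exploit \cref{prop:char-poly} together with the non-resonance hypothesis to control the \(\lambda\)-eigenspace of the restricted monodromy \(\bar{h}'_*\) on \(H_1(\overbar{X}_{i,t}, \mathbb{C})\), and then transport this information to \(H_1(\overbar{X}_t, \mathbb{C})\) through the commutative diagram \eqref{eq:diagram-homology}.

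First, I would observe that \(\bar{h}'_*\) is semisimple: since it agrees with the deck transformation of the cyclic \(N_i\)-fold cover \(\phi_t\) described in \cref{sec:dual-geometric}, one has \((\bar{h}'_*)^{N_i} = \Id\), so the minimal polynomial of \(\bar{h}'_*\) divides \(t^{N_i}-1\), which has simple roots. Next, by \cref{prop:char-poly} and the assumption \(\chi(E_i^\circ) = -1\), the characteristic polynomial reads \(\Delta_i(t) = (t^{N_i}-1)(t^{c_i}-1)\). The eigenvalue \(\lambda = \exp(-2\pi \imath \sigma_{i,\nu}(\omega))\) satisfies \(\lambda^{N_i} = 1\), so it is a root of \(t^{N_i}-1\). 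The non-resonance hypothesis forces \(N_j \sigma_{i,\nu}(\omega) \notin \mathbb{Z}\), whence \(\lambda^{N_j} \neq 1\) for every \(D_j \in \textnormal{Supp}(F_\pi)\) intersecting \(E_i\); since \(c_i\) divides each such \(N_j\), this rules out \(\lambda^{c_i} = 1\), so \(\lambda\) is not a root of \(t^{c_i}-1\). Consequently, the \(\lambda\)-eigenspace of \(\bar{h}'_*\) in \(H_1(\overbar{X}_{i,t}, \mathbb{C})\) has dimension exactly one.

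Second, using the commutativity \(j_* \circ \bar{h}'_* = \bar{h}_* \circ j_*\) from \eqref{eq:diagram-homology}, the subspace \(V := j_* H_1(\overbar{X}_{i,t}, \mathbb{C}) \subseteq H_1(\overbar{X}_t, \mathbb{C})\) is \(\bar{h}_*\)-invariant and \(\ker j_*\) is \(\bar{h}'_*\)-invariant. The induced action of \(\bar{h}_*\) on \(V\) is therefore a quotient of the semisimple operator \(\bar{h}'_*\), hence semisimple, so the generalized \(\lambda\)-eigenspace of \(\bar{h}_*\) in \(V\) coincides with its honest \(\lambda\)-eigenspace and is at most one-dimensional. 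By the construction in \cref{sec:plane-curves}, the cycle \(\gamma(t)\) arises as \(\rho_* \tilde{\gamma}(\tilde{t})\) from a family supported in a tubular neighborhood of \(\widetilde{E}_i^\circ\) and therefore lies in \(V\). Since the projection onto the generalized \(\lambda\)-eigenspace is a polynomial in \(\bar{h}_*\), \(\gamma_{\lambda}(t)\) also lies in \(V\); by \eqref{eq:integral-expansion-total}, \(\langle A^\omega_{\sigma_{i,\nu}-1,0}(t), \gamma_{\lambda}(t) \rangle = \langle A^\omega_{\sigma_{i,\nu}-1,0}(t), \gamma(t) \rangle \neq 0\), so \(\gamma_{\lambda}(t)\) is non-zero and, by semisimplicity, an eigenvector spanning the one-dimensional \(\lambda\)-eigenspace of \(\bar{h}_*|_V\).

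Finally, for the duality statement: any basis of eigenvectors of \(\bar{h}_*|_V\) may be chosen to contain \(\gamma_{\lambda}(t)\) as its unique element of eigenvalue \(\lambda\); the remaining basis vectors have eigenvalues different from \(\lambda\). From the expansion \eqref{eq:integral-expansion-simple} and the subsequent discussion, the pairing \(\langle A^\omega_{\sigma_{i,\nu}-1,0}(t), \gamma'(t) \rangle\) is non-zero only when \(\gamma'(t)\) is a generalized eigenvector of eigenvalue \(\lambda = \exp(-2\pi \imath \sigma_{i,\nu}(\omega))\). Thus \(A^\omega_{\sigma_{i,\nu}-1,0}(t)\) annihilates every basis vector other than \(\gamma_{\lambda}(t)\), which is precisely the duality condition \eqref{eq:duality}. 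The delicate point is to make sure the dimension count survives the passage through the possibly non-injective map \(j_*\); this is settled by the observation that semisimplicity of \(\bar{h}'_*\) descends to the quotient by the invariant subspace \(\ker j_*\), combined with the non-vanishing of the pairing, which prevents the one-dimensional \(\lambda\)-eigenspace of \(\bar{h}'_*\) from collapsing in \(V\).
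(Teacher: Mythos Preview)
Your proof is correct and follows essentially the same strategy as the paper's: invoke \cref{prop:char-poly} with \(\chi(E_i^\circ) = -1\) and non-resonance to see that \(\lambda\) appears with multiplicity exactly one in \(\Delta_i(t)\), then deduce that \(\gamma_\lambda(t)\) is the unique (up to scalar) \(\lambda\)-eigenvector in \(j_* H_1(\overbar{X}_{i,t}, \mathbb{C})\) and hence dual to \(A^\omega_{\sigma_{i,\nu}-1,0}(t)\). Your version is more careful in two places the paper leaves implicit---the semisimplicity of \(\bar{h}'_*\) via \((\bar{h}'_*)^{N_i} = \Id\), and the survival of the one-dimensional eigenspace under the possibly non-injective \(j_*\)---but the architecture is the same.
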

\begin{proof}
Since the diagram \eqref{eq:diagram-homology} commutes, the monodromy endomorphism \( \bar{h}^* \) restricts to \( j_* H_1(\overbar{X}_{i, t}, \mathbb{C}) \). For this reason, the projection \( \gamma_\lambda(t) \) of \( \gamma(t) \) to the subspace where \( \bar{h}^* - \lambda \Id \) acts nilpotently is also an element of \( j_* H_1(\overbar{X}_{i, t}, \mathbb{C}) \).

\jump

If we assume that \( \chi(E_i^\circ) = -1 \), then by \cref{prop:char-poly} there is only one eigenvalue equal to \( \lambda \) in \( \Delta_i(t) \). Indeed, notice that since \( \sigma_{i, \nu}(\omega) \) is non-resonant, the eigenvalue \( \lambda \) cannot be a root of the factor \( t^{c_i} - 1 \) of \( \Delta_i(t) \). The fact that \( \bar{h}'_* \) has only one eigenvalue equal to \( \lambda \), implies that \( A^\omega_{\sigma_{i, \nu}-1, 0}(t) \) is dual to \( \gamma_{\lambda}(t) \) with respect to any basis of the monodromy \( \bar{h}^* \) restricted to \( j_* H_1(\overbar{X}_{i, t}, \mathbb{C}) \).
\end{proof}
In \cref{sec:generic-bexponents}, for irreducible plane curves, we will merge the cycles and the locally constant geometric sections with respect to each rupture exceptional divisor \( E_i \) to form a basis of locally constant geometric sections of the bundle \( H^1 \) dual to a basis of \( H_1 \). From this result, one may recover the classical result that the monodromy of irreducible plane curves is of finite order, see \cite{trang72, acampo73bis}.

\subsection{Partial characteristic polynomial of the monodromy} \label{sec:char-poly}
In the celebrated work \cite{acampo75} of A'Campo, the characteristic polynomial of the monodromy \( h_* : H_{n}(X_t, \mathbb{C}) \longrightarrow H_{n}(X_t, \mathbb{C}) \) is computed in terms of a resolution of singularities. A'Campo constructs a homotopic model \( F_t \) of the Milnor fiber \( \overbar{X}_t \), so that there is a continuous retraction \( c_t : F_t \longrightarrow X_0 \) from the general to the exceptional fiber, which is compatible with the geometric monodromy. Then, he uses Leray's spectral sequence associated with this map to compute the Lefschetz numbers of the monodromy which in turn determine the zeta function of the monodromy.

\jump

We will show next how the same argument works to prove \cref{prop:char-poly}. Here, we will use the map \( \phi_t : \overbar{X}_{i, t} \longrightarrow E_i^\circ \) of the unramified covering. Over the sets \( E_i^\circ \), the homotopic model \( F_t \) and the fiber \( \overbar{X}_t \) are homeomorphic. To simplify the notation, in the sequel, we will denote also by \( \bar{h} \) the restriction of the monodromy \( \bar{h} \) to the sets \( \overbar{X}_{i, t} \subset \overbar{X}_{t} \). First, recall the zeta function of the monodromy
\[ Z_{\bar{h}}(t) := \prod_{q \geq 0} \det\big(\Id - t{\bar{h}^*}; H^q(\overbar{X}_{i, t}, \mathbb{C})\big)^{{(-1)}^{q+1}}. \]
If we denote the Lefschetz numbers of the monodromy in the following way
\[ \Lambda(\bar{h}^k) = \sum_{q \geq 0} (-1)^q \tr\big((\bar{h}^*)^k; H^q(\overbar{X}_{i, t}, \mathbb{C})\big), \]
the zeta function \( Z_{\bar{h}}(t) \) depends on the integers \( \Lambda(\bar{h}^k) \) through the following well-known inversion formula. Let \( s_1, s_2, \dots \) be the integers defined by \( \Lambda(\bar{h}^k) = \sum_{i | k} s_i \) for \( k \geq 0 \), then
\[ Z_{\bar{h}}(t) = \prod_{i \geq 0} (1 - t^i)^{-s_i/i}. \]
Now, one can use Leray's spectral sequence of the map \( \phi_t \) to compute the Lefschetz numbers \( \Lambda(\bar{h}^k) \). Define the \emph{sheaf of cycles vanishing to the divisor} \( E_i \) as \( \Psi_i^q := R^q{\phi_t}_* \underline{\mathbb{C}}_{\overbar{X}_{i,t}} \). If one replaces \( \phi_t \) by the retraction \( c_t \), one gets the usual sheaf of vanishing cycles, see for instance \cite[Exp. XIII]{sga7-2}. The second page of Leray's spectral sequence of the map \( \phi_t \) is then equal to
\[ E_2^{p, q} = H^p(E_i^\circ, \Psi_i^q), \]
and the spectral sequence converges to \( E_\infty^{p+q} = H^{p+q}(\overbar{X}_{i, t}, \mathbb{C}) \). Since \( \phi_t \) is compatible
with the geometric monodromy, the monodromy endomorphism \( \bar{h}^* \) induces a monodromy action \( T \)  on the sheaf of vanishing cycles of the divisor \( E_i \), namely
\[ T(U) := \big(\bar{h}\big|_{\phi^{-1}_t(U)}\big)^* : H^*(\phi^{-1}_t(U), \mathbb{C}) \longrightarrow H^*(\phi^{-1}_t(U), \mathbb{C}), \]
with the actions \( T_2^{p, q} \) on the terms \( E_2^{p,q} \) converging to \( T^{p+q}_\infty = \bar{h}^* : H^{p+q}(\overbar{X}_{i,t}, \mathbb{C}) \longrightarrow H^{p+q}(\overbar{X}_{i,t}, \mathbb{C}) \). Therefore, one has that
\[ \Lambda(\bar{h}^k) = \Lambda(T^k, E^{\bullet \bullet}_\infty) = \cdots = \Lambda(T^k, E_3^{\bullet \bullet}) = \Lambda(T^k, E_2^{\bullet \bullet}), \]
see for instance \cite[Thm. 4.3.14]{spanier81}. It is then enough to compute \( \Lambda(T^k, E_2^{\bullet \bullet}) \). Again, one argues similarly to \cite{acampo75}. Since \( \phi_t \) is a locally trivial fibration, the sheaves of vanishing cycles of the divisor \( E_i \), \( \Psi^\bullet_i \), are complex local systems. Notice that when one considers the whole exceptional fiber \( \overbar{X}_0 \), the usual sheaves of vanishing cycles are not local systems but constructible sheaves. Hence,
\[ \Lambda(T^k, E^{\bullet \bullet}_2) = \sum_{p,q \geq 0} \tr\big(T^k; H^p(E_i^\circ, \Psi_i^q)\big) = \chi(E_i^\circ) \Lambda(T^k, (\Psi_i^\bullet)_s),  \]
with \( s \in E_i^\circ \). The stalk \( (\Psi_i^\bullet)_s \) is identified with the cohomology of the Milnor fibration of the equation of \( E_i \) at \( s \in E_i^\circ \), and this identification is compatible with both monodromies, see \cite{acampo73}. Since locally at \( s \in E_i^\circ \), \( E_i \) is \( x_0^{N_i} = 0 \) for some local coordinate \( x_0 \), counting fixed points gives
\[ \Lambda(T^k, (\Psi_i^\bullet)_s) =  \begin{cases}
  \, 0, & \textrm{if} \quad  N_i \centernot\mid k, \\
  \, N_i, & \textrm{if} \quad N_i \mid k.
  \end{cases}
\]
Finally, the zeta function of the monodromy \( \bar{h}_* : H_n(\overbar{X}_{i, t}, \mathbb{C}) \longrightarrow H_n(\overbar{X}_{i, t}, \mathbb{C}) \) equals to
\[ Z_{\bar{h}}(t) = \big(1-t^{N_i}\big)^{-\chi(E_i^\circ)}. \]

In order to prove \cref{prop:char-poly}, it remains to compute the characteristic polynomial \( \Delta_i(t) \) of the monodromy action from the zeta function.

\begin{proof}[Proof of \cref{prop:char-poly}]
If we now restrict to the case \( n = 1 \), then the only homology groups are \( H_i(\overbar{X}_{i, t}, \mathbb{C}) \) for \( i = 0,1 \). Hence, in terms of the zeta functions the characteristic polynomial \( \Delta_i(t) \) reads as
\[ \Delta_i(t) = t^{b_1} \bigg[ \frac{t^{b_0}-1}{t} Z_{\bar{h}}(1/t) \bigg], \]
where \( b_0 \) and \( b_1 \) are, respectively, the dimensions of \( H_0(\overbar{X}_{i,t}, \mathbb{C}) \) and \( H_1(\overbar{X}_{i,t}, \mathbb{C}) \).

\jump

Let us now compute the dimension of these homology groups. First, one has that \( \chi(\overbar{X}_{i, t}) = N_i \chi(E_i^\circ) \) since \( \overbar{X}_{i, t} \) is a \( N_i \)--fold unramified covering of \( E_i^\circ \). Second, the number of connected components \( c_i \) of a covering equals the index of the fundamental group of the base in the covering group.

\jump

Fix a point \( x \in E_i^\circ \). The fundamental group \( \pi_1(E_i^\circ, x) \) has rank \( r_i - 1 \), where \( r_i \) is the number of missing points from \( E_i^\circ \), and it is generated by loops \( \gamma_1, \dots, \gamma_{r_i} \) around the missing points with the relation \( \gamma_{r_i} \gamma_{r_{i-1}} \cdots \gamma_{1} = 1 \). On the other hand, the covering group is cyclic and is generated by the monodromy action \( \bar{h} \). Hence, the action of a loop \( \gamma_j \) around the intersection of \( E_i \) with \( E_j \) in the covering group is \( \bar{h}^{N_i/ \gcd(N_i,N_j)} \). Therefore, the index of \( \pi_1(E_i^\circ, x) \) in the covering group is
\[ c_i = \gcd(N_j \ |\ E_j \cap E_i \neq \emptyset, E_j \in \textnormal{Supp}(F_\pi)) = b_0, \]
and \( b_1 = c_i - N_i \chi(E_i^\circ) \). Finally,
\[ \Delta_{i}(t) = (t^{N_i} - 1)^{- \chi(E_i^\circ)}(t^{c_i}-1). \]
\end{proof}

\subsection{Plane branches} \label{sec:plane-branches}

Let \( f : (\mathbb{C}^2, \boldsymbol{0}) \longrightarrow (\mathbb{C}, 0) \) be a holomorphic function defining a plane branch with characteristic sequence \( (n, \beta_1, \dots, \beta_g) \). The goal of this section is to prove the existence of a particular one-parameter \( \mu \)-constant deformation of any plane branch \( f \), see \cref{prop:deformation}, that will be key for the proof of Yano's conjecture. To that end, we will review next some classic results about semigroups of plane branches and we will introduce Teissier's monomial curve and its deformations. For further references about these topics, the reader is referred to \cite{casas}, \cite{ctc-wall} and \cite{zariski-moduli}.

\jump

Let us first recall some well-known facts about plane curve singularities. The notion of equisingularity classifies plane curves having the same combinatorial structure in its minimal resolution process \cite{zariski65-1}. This classification coincides with the topological equivalence as germs in the plane \cite{zariski32, brauner}. In addition, by the results in \cite{trang-ramanujam76}, a \( \mu \)-constant family of reduced plane curves is topologically trivial and hence equisingular.

\jump

Let \( v_f : \mathcal{O}_{X, \boldsymbol{0}} \longrightarrow \mathbb{Z}_{+} \) be the curve valuation in the local ring \( \mathcal{O}_{X, \boldsymbol{0}} \) given by the intersection multiplicity with \( f \). Denote by \( \Gamma := \{v_f(g) \in \mathbb{Z} \ |\ g \in \mathcal{O}_{X, \boldsymbol{0}} \setminus (f)\} \subseteq \mathbb{Z}_{+} \) the semigroup of values of this valuation. Since \( f \) is irreducible, the semigroup is finitely generated, that is, \( \Gamma = \langle \overline{\beta}_0, \dots, \overline{\beta}_g \rangle \) with \( \gcd(\overline{\beta}_0, \dots, \overline{\beta}_g) = 1 \). The semigroup of a plane branch is a complete topological invariant, see \cite[Cor. 5.8.5]{casas}. It is well-known that the generators of the semigroup are determined by the characteristic sequence. For instance, with the notations from \cref{sec:yanos-conjecture}, one has that \( \overline{\beta}_i = R'_i \), see for instance \cite[Thm. 3.9]{zariski-moduli}.

\jump

Let us introduce some extra notation associated with a semigroup \( \Gamma = \langle \overline{\beta}_0, \dots, \overline{\beta}_g \rangle\) in addition to the notation from \cref{eq:yano-definitions}. First, let \( e_i := \gcd(\overline{\beta}_0, \dots, \overline{\beta}_i), e_0 := \overline{\beta}_0 \) and set \( n_i := e_{i-1} / e_i \) for \( i = 1, \dots, g \). Notice that the integers \( n_1, \dots, n_g \) are strictly larger than 1 and we have that \( e_{i-1} = n_i n_{i+1} \cdots n_g \). Finally, define the integers \( m_i := \beta_i /e_i \) and \( \overline{m}_i := \overline{\beta}_i /e_i \) which will be useful in the sequel. The conductor \( c \) of a semigroup is the minimum integer such that for all \( \alpha \geq c \), \( \alpha \) belongs to the semigroup. For the semigroup \( \Gamma \) the conductor \( c \) can be computed, see \cite[\S II.3]{zariski-moduli}, by the formula
\[ c = n_g \overline{\beta}_g - \beta_g - n + 1. \]

\jump

All these numerical data associated with the semigroup allows us to describe some numerical data of a resolution of the plane branch \( f \) more easily. In the sequel, we will assume that the rupture divisors of the minimal resolution of \( f \) are labeled \( E_i, i = 1, \dots, g \). Using this convention, if \( v_i \) denotes the divisorial valuation on \( \mathcal{O}_{X, \boldsymbol{0}} \) associated with the \( i \)--th rupture divisor of \( f \), then \( v_i(f) = N_i = n_i \overline{\beta}_i \), for \( i = 1, \dots, g \), see \cite[Thm. 8.5.2]{ctc-wall}. Notice also that these quantities coincide with the values \( R_i \) defined in \cref{sec:yanos-conjecture}. In addition, \( k_i + 1 = m_i + n_1 \cdots n_i \), which coincides with the value \( r_i \) from Equations \eqref{eq:yano-definitions}. Similarly, let \( f_0, f_1, \dots, f_g \in \mathcal{O}_{X, \boldsymbol{0}} \) be irreducible plane curves such that \( v_f(f_i) = \overline{\beta}_i \). It is not hard to see that the semigroup of \( f_i \), \( i \geq 2 \), is
\begin{equation} \label{eq:semigroup-vi}
\Gamma_i = e_{i-1}^{-1} \langle \overline{\beta}_0, \dots, \overline{\beta}_{i-1} \rangle = \langle n_1 n_2 \cdots n_{i}, n_2 \cdots n_{i} \overline{m}_{1}, \dots, n_{i-1} \overline{m}_{i-2}, \overline{m}_{i-1} \rangle
\end{equation}
and that, \( v_j(f_i) = n_{j-1} \cdots n_i \overline{m}_i \), for \( j \geq i \), see \cite[\S 5.8]{casas}. Moreover, the semigroup of the divisorial valuation \( v_i \) coincides with \( \Gamma_i \).

\jump

Before introducing the monomial curve associated with a semigroup \( \Gamma \), first, we need the following property of the semigroups coming from plane branches.

\begin{lemma}[{\cite[Lemma 2.2.1]{teissier-appendix}}] \label{lemma:semigroup}
If \( \Gamma = \langle \overline{\beta}_{0}, \overline{\beta}_{1}, \dots, \overline{\beta}_{g} \rangle \) is the semigroup of a plane branch, then
\begin{equation} \label{eq:lemma-semigroup}
n_i \overline{\beta}_i \in \langle \overline{\beta}_{0}, \overline{\beta}_1, \dots, \overline{\beta}_{i-1} \rangle \quad \textnormal{for} \quad 1 \leq i \leq g.
\end{equation}
\end{lemma}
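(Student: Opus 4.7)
The plan is to reduce the statement to a numerical inequality and then appeal to the conductor formula $c = n_g \overline{\beta}_g - \beta_g - n + 1$ recalled at the start of this section. First, since $e_{i-1} = n_i e_i$ and $e_i$ divides $\overline{\beta}_i$ by the definition of $e_i$, we have $n_i\, \overline{\beta}_i = \overline{m}_i\, e_{i-1}$. Because $e_{i-1} = \gcd(\overline{\beta}_0, \dots, \overline{\beta}_{i-1})$, the claim \eqref{eq:lemma-semigroup} is equivalent to showing that the integer $\overline{m}_i$ belongs to the numerical semigroup
\[ \Gamma'_{i-1} := \left\langle \overline{\beta}_0/e_{i-1},\, \overline{\beta}_1/e_{i-1},\, \dots,\, \overline{\beta}_{i-1}/e_{i-1} \right\rangle, \]
whose generators are globally coprime. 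The case $i = 1$ is immediate from $n_1 \overline{\beta}_1 = \overline{m}_1\, \overline{\beta}_0$.

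For $i \geq 2$, I would identify $\Gamma'_{i-1}$ as the value semigroup of the plane branch obtained by truncating the Puiseux expansion of $f$ after the $(i-1)$-th characteristic pair, namely the branch with characteristic sequence $(n/e_{i-1},\, \beta_1/e_{i-1},\, \dots,\, \beta_{i-1}/e_{i-1})$. Applying the conductor formula to this truncated branch yields
\[ c'_{i-1} = \frac{n_{i-1}\,\overline{\beta}_{i-1} - \beta_{i-1} - n}{e_{i-1}} + 1, \]
so proving the lemma reduces to the inequality $\overline{m}_i \geq c'_{i-1}$, or equivalently
\[ n_i\,\overline{\beta}_i \;\geq\; n_{i-1}\,\overline{\beta}_{i-1} - \beta_{i-1} - n + e_{i-1}. \]

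To dispatch this inequality I would use the classical recursion $\overline{\beta}_i = n_{i-1}\overline{\beta}_{i-1} + \beta_i - \beta_{i-1}$ valid for $i \geq 2$. Multiplying it by $n_i$ and rearranging reduces the required bound to the combination $(n_i - 1)(n_{i-1}\overline{\beta}_{i-1} - \beta_{i-1}) + n_i \beta_i + (n - e_{i-1}) \geq 0$, which is manifestly non-negative since $n_i \geq 2$, $\beta_i > \beta_{i-1}$, $\overline{\beta}_{i-1} \geq \beta_{i-1}$, and $e_{i-1} \leq n$.

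I expect the main obstacle to be the middle step: verifying that the truncated generators really form the value semigroup of a genuine plane branch, so that the conductor formula applies. This rests on the standard fact that the characteristic sequence determines the semigroup through the same recursion used above, but the careful unwinding of the numerical invariants $e'_j, n'_j, \overline{m}'_j$ attached to the truncated sequence --- and checking they reduce to $e_j/e_{i-1}$, $n_j$, etc.\ --- takes some bookkeeping that I would need to treat carefully.
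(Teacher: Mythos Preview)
Your argument is correct. The reduction to showing $\overline{m}_i$ exceeds the conductor of the rescaled semigroup $e_{i-1}^{-1}\langle \overline{\beta}_0,\dots,\overline{\beta}_{i-1}\rangle$ is clean, and the inequality you derive from the recursion $\overline{\beta}_i = n_{i-1}\overline{\beta}_{i-1} + \beta_i - \beta_{i-1}$ does hold for the reasons you give. The point you flag as a potential obstacle---that the truncated generators are genuinely the value semigroup of a plane branch, so the conductor formula applies---is not an issue here: the paper itself records exactly this fact in \cref{eq:semigroup-vi}, identifying $e_{i-1}^{-1}\langle \overline{\beta}_0,\dots,\overline{\beta}_{i-1}\rangle$ with the semigroup $\Gamma_i$ of the approximate root $f_i$. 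So the bookkeeping you anticipate is already absorbed by results the paper quotes.

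As for comparison: the paper gives no proof of this lemma at all---it is simply cited from Teissier's appendix \cite{teissier-appendix}. Your argument therefore supplies something the paper does not, and does so using only facts (the conductor formula, the recursion for $\overline{\beta}_i$, and the identification \eqref{eq:semigroup-vi}) already available in the surrounding text.
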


\cref{lemma:semigroup} together with the fact that \( n_i \overline{\beta}_{i} < \overline{\beta}_{i+1} \), which follows, for instance, from Equations \eqref{eq:yano-definitions}, completely characterizes the semigroups coming from plane branches, see \cite[\S I.3.2]{teissier-appendix}. Let us now introduce the monomial curve \( C^\Gamma \).

\jump

Let \( \Gamma = \langle \overline{\beta}_0, \overline{\beta}_1, \dots, \overline{\beta}_g \rangle \subseteq \mathbb{Z}_{+}\) be a semigroup such that \( \mathbb{Z}_{+} \setminus \Gamma \) is finite, that is, \( \gcd(\overline{\beta}_0, \dots, \overline{\beta}_g) = 1 \). We do not need to assume yet that the semigroup comes from a plane branch. Following \cite[\S I.1]{teissier-appendix}, let \( (C^\Gamma, \boldsymbol{0}) \subset (Y, \boldsymbol{0}) \) be the curve defined via the parameterization
\[ C^\Gamma : u_i = t^{\overline{\beta}_i}, \quad 0 \leq i \leq g, \]
where \( Y := \mathbb{C}^{g+1} \). The germ \( (C^\Gamma, \boldsymbol{0} ) \) is irreducible since \( \gcd(\overline{\beta}_0, \dots, \overline{\beta}_g) = 1 \) and its local ring \( \mathcal{O}_{C^\Gamma, \boldsymbol{0}} \) equals \( \mathbb{C}\big\{ t^{\overline{\beta}_0}, \dots, t^{\overline{\beta}_g} \big\} \), which has a natural structure of graded subalgebra of \( \mathbb{C}\{t\} \). The first important property of the monomial curve \( C^\Gamma \) is the following.
\begin{theorem}[{\cite[Thm. 1]{teissier-appendix}}] \label{thm:teissier-appendix1}
Every branch \( (C, \boldsymbol{0}) \) with semigroup \( \Gamma \) is isomorphic to the generic fiber of a one-parameter complex analytic deformation of \( (C^\Gamma, \boldsymbol{0}) \).
\end{theorem}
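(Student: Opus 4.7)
The plan is to realize the branch $C$ explicitly inside $Y = \mathbb{C}^{g+1}$ using the semigroup generators, and then deform to the monomial curve $C^\Gamma$ via the natural $\mathbb{C}^*$-action on $Y$ with weights $(\overline{\beta}_0, \ldots, \overline{\beta}_g)$, which is the action fixing $C^\Gamma$ set-theoretically. First I would choose semi-roots $f_0, f_1, \ldots, f_g \in \mathcal{O}_{X, \boldsymbol{0}}$ with $v_f(f_i) = \overline{\beta}_i$ (for instance $f_0 = x$, $f_1 = y$ in good coordinates, and the higher $f_i$ are the classical approximate roots mentioned in \cref{sec:plane-branches}). Since $\Gamma$ is generated by $\overline{\beta}_0, \ldots, \overline{\beta}_g$, the restrictions $f_i|_C \in \mathcal{O}_{C, \boldsymbol{0}} \cong \mathbb{C}\{t\}$ generate $\mathcal{O}_{C, \boldsymbol{0}}$ as a $\mathbb{C}$-algebra, so the morphism $\iota = (f_0, \ldots, f_g) : (C, \boldsymbol{0}) \longrightarrow (Y, \boldsymbol{0})$ is a closed immersion. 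After rescaling each coordinate $u_i$ of $Y$ by a non-zero constant, the branch $\iota(C)$ admits a parametrization
\[
u_i = \phi_i(t) := t^{\overline{\beta}_i} + \sum_{k > \overline{\beta}_i} a_{ik}\, t^k, \qquad 0 \leq i \leq g.
\]

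Next I would construct the deformation as a convergent analytic family of parametrized curves in $Y \times \mathbb{C}_s$. Set $a_{i, \overline{\beta}_i} := 1$ and define
\[
\Phi_i(\tau, s) := \sum_{k \geq \overline{\beta}_i} a_{ik}\, s^{k - \overline{\beta}_i}\, \tau^k,
\]
which converges in a small bidisk because $\phi_i$ does. The map $\Phi = (\Phi_0, \ldots, \Phi_g, s)$ traces out an analytic surface $\mathcal{C} \subset Y \times \mathbb{C}_s$ fibered over $\mathbb{C}_s$. For $s = 0$ one has $\Phi_i(\tau, 0) = \tau^{\overline{\beta}_i}$, so the central fiber $\mathcal{C}_0$ is precisely $(C^\Gamma, \boldsymbol{0})$. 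For $s \neq 0$, the substitution $\tau = t/s$ gives $\Phi_i(\tau, s) = s^{-\overline{\beta}_i} \phi_i(s\tau)$, so $\mathcal{C}_s$ is the image of $\iota(C)$ under the weighted $\mathbb{C}^*$-action $u_i \mapsto s^{-\overline{\beta}_i} u_i$, hence analytically isomorphic to $(C, \boldsymbol{0})$.

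The remaining step is to verify that $\mathcal{C} \longrightarrow \mathbb{C}_s$ is a genuine flat deformation of $(C^\Gamma, \boldsymbol{0})$. Here I would exploit the fact, essentially due to Teissier, that $C^\Gamma$ is a complete intersection of codimension $g$ in $Y$: using \cref{lemma:semigroup}, for each $1 \leq j \leq g$ there exist non-negative integers $\alpha_{jk}$ with $n_j \overline{\beta}_j = \sum_{k < j} \alpha_{jk} \overline{\beta}_k$, and the binomials $F_j := u_j^{n_j} - \prod_{k < j} u_k^{\alpha_{jk}}$ cut out $(C^\Gamma, \boldsymbol{0})$ scheme-theoretically. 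The ideal of $\iota(C)$ in $\mathcal{O}_{Y, \boldsymbol{0}}$ can then be written in the form $(F_1 + G_1, \ldots, F_g + G_g)$ with each $G_j$ of weighted order strictly greater than $n_j \overline{\beta}_j$, because $\iota(C)$ and $C^\Gamma$ have the same tangent cone under the weighted grading and the same semigroup. Scaling the weighted-homogeneous components of the $G_j$ by the appropriate powers of $s$ realizes $\mathcal{C}$ as the zero locus of $(F_1 + s \cdot \widetilde{G}_1, \ldots, F_g + s \cdot \widetilde{G}_g)$, a weighted-homogeneous perturbation of a complete intersection that remains a complete intersection of pure dimension one for all $s$, hence flat over $\mathbb{C}_s$. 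The main obstacle is precisely this last step: showing rigorously that the semi-root embedding produces an ideal that is a weighted deformation of the binomial ideal of $C^\Gamma$, rather than just a set-theoretic one, and controlling the weights of the correction terms $G_j$; this is where the structural result \cref{lemma:semigroup} together with the standard theory of approximate roots for plane branches does the heavy lifting.
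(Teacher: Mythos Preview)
The paper does not give a proof of this statement: it is quoted verbatim from Teissier's appendix \cite[Thm.~1]{teissier-appendix} and used as a black box, so there is no ``paper's own proof'' to compare against. Your argument is, in outline, exactly Teissier's original one: embed $(C,\boldsymbol{0})$ in $(\mathbb{C}^{g+1},\boldsymbol{0})$ via elements realising the semigroup generators, and then degenerate to $C^\Gamma$ using the weighted $\mathbb{C}^*$-action; the central fibre is the monomial curve because the weighted initial terms of the parametrisation are $t^{\overline{\beta}_i}$.

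One point deserves care. Your flatness step invokes the complete intersection description of $C^\Gamma$, but that description (\cref{prop:teissier-appendix2} in the paper) already assumes the extra condition \eqref{eq:lemma-semigroup}, whereas \cref{thm:teissier-appendix1} as stated in the paper is formulated for an arbitrary numerical semigroup with $\gcd(\overline{\beta}_0,\dots,\overline{\beta}_g)=1$. So as written your argument proves the theorem only under the plane-branch hypothesis on $\Gamma$; this is harmless for the applications in the paper, but it is a genuine restriction relative to the statement. A cleaner way to get flatness in full generality, and the one Teissier uses, is to observe that the filtration of $\mathcal{O}_{C,\boldsymbol{0}}$ by the valuation $v_f$ has associated graded ring canonically isomorphic to $\mathcal{O}_{C^\Gamma,\boldsymbol{0}}$; your family is then nothing but $\operatorname{Spec}$ of the Rees algebra of this filtration, which is automatically flat over $\mathbb{C}[s]$. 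This bypasses entirely the question of whether the ideal of $\iota(C)$ can be written as a weighted perturbation of the binomial ideal, which you correctly flag as the delicate step in your approach.
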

Moreover, with some extra assumptions on the semigroup \( \Gamma \), it is possible to obtain more structure on the singularity of \( (C^\Gamma, \boldsymbol{0}) \) and even explicit equations.
\begin{proposition}[{\cite[Prop. 2.2]{teissier-appendix}}] \label{prop:teissier-appendix2}
If \( \Gamma \) satisfies \eqref{eq:lemma-semigroup}, then the branch \( (C^\Gamma, \boldsymbol{0}) \subset (Y, \boldsymbol{0}) \) is a quasi-homogeneous complete intersection with equations
\begin{equation} \label{eq:monomial-complete-intersection}
h_i := u_i^{n_i} - u_0^{l^{(i)}_0} u_1^{l^{(i)}_1} \cdots u_{i-1}^{l^{(i)}_{i-1}} = 0, \quad 1 \leq i \leq g,
\end{equation}
and weights \( \overline{\beta}_0, \overline{\beta}_1, \dots, \overline{\beta}_g \), where
\[ n_i \overline{\beta}_i = \overline{\beta}_{0} l_0^{(i)} + \cdots + \overline{\beta}_{i-1} l_{i-1}^{(i)} \in \langle \overline{\beta}_{0} , \dots, \overline{\beta}_{i-1} \rangle. \]
\end{proposition}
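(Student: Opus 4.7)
My plan has three components: verify weighted-homogeneity and the obvious inclusion $(C^\Gamma,\boldsymbol{0})\subseteq V(h_1,\dots,h_g)$, establish that the $h_i$ form a regular sequence, and then match the rank of the resulting local ring with that of $\mathcal{O}_{C^\Gamma,\boldsymbol{0}}$. For weighted-homogeneity, assigning weight $\overline{\beta}_i$ to $u_i$, the hypothesis $n_i\overline{\beta}_i=\sum_{j<i}\overline{\beta}_j l_j^{(i)}$ ensures both monomials appearing in $h_i$ have weight $n_i\overline{\beta}_i$, so each $h_i$ is weighted-homogeneous of that weight. Substituting $u_j=t^{\overline{\beta}_j}$ into $h_i$ returns $t^{n_i\overline{\beta}_i}-t^{\sum\overline{\beta}_j l_j^{(i)}}=0$, yielding the claimed inclusion.

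Next I would show that $h_1,\dots,h_g$ is a regular sequence in $\mathcal{O}_{Y,\boldsymbol{0}}=\mathbb{C}\{u_0,\dots,u_g\}$ by induction on $i$: since $h_1,\dots,h_{i-1}$ involve only $u_0,\dots,u_{i-1}$ while $h_i=u_i^{n_i}-(\textrm{term in }u_0,\dots,u_{i-1})$ is monic of degree $n_i$ in the fresh variable $u_i$, it remains a nonzerodivisor modulo the preceding equations. Consequently $A:=\mathcal{O}_{Y,\boldsymbol{0}}/(h_1,\dots,h_g)$ is a one-dimensional Cohen--Macaulay local ring in which $u_0$ is a nonzerodivisor, so $A$ is a finitely generated, torsion-free, hence free, module over the discrete valuation ring $\mathbb{C}\{u_0\}$.

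It remains to compute the rank. The monomial reduction $u_i^{n_i}\equiv u_0^{l_0^{(i)}}\cdots u_{i-1}^{l_{i-1}^{(i)}}\pmod{h_i}$, applied top-down in the index $i$, shows that $\{u_1^{a_1}\cdots u_g^{a_g}:0\le a_i<n_i\}$ spans $A$ over $\mathbb{C}\{u_0\}$, bounding its rank by $\prod_{i=1}^{g}n_i=\overline{\beta}_0$. On the other hand, $\mathcal{O}_{C^\Gamma,\boldsymbol{0}}$ is free of rank exactly $\overline{\beta}_0$ over $\mathbb{C}\{u_0\}$: a $\mathbb{C}\{u_0\}$-basis is obtained by picking, for each residue $r\in\{0,1,\dots,\overline{\beta}_0-1\}$, the smallest $\gamma\in\Gamma$ with $\gamma\equiv r\pmod{\overline{\beta}_0}$, which exists since $\gcd(\overline{\beta}_0,\dots,\overline{\beta}_g)=1$. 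Hence the surjection $A\twoheadrightarrow\mathcal{O}_{C^\Gamma,\boldsymbol{0}}$ equates the two ranks, and a surjection of free $\mathbb{C}\{u_0\}$-modules of the same finite rank is an isomorphism. This identifies $A$ with $\mathcal{O}_{C^\Gamma,\boldsymbol{0}}$ and presents $C^\Gamma$ as a complete intersection cut out by $h_1,\dots,h_g$.

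The main obstacle I anticipate is the rigorous justification of the monomial reduction in the analytic (convergent-series) setting: one needs a term order under which the exchange $u_i^{n_i}\mapsto u_0^{l_0^{(i)}}\cdots u_{i-1}^{l_{i-1}^{(i)}}$ is strictly decreasing and under which the reduction of a convergent series itself converges. The cleanest route is to exploit the weighted-homogeneity of the $h_i$: the reduction preserves weighted degree and each weighted degree contains only finitely many monomials, so one can reduce degree-by-degree using a reverse-lex order with $u_g>\cdots>u_0$, and the weighted-degree decomposition of a convergent series guarantees that the reduced series is automatically convergent.
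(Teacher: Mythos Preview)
The paper does not supply its own proof of this proposition; it is quoted as a cited result from Teissier's appendix (note the attribution \texttt{[Prop.~2.2]} in the proposition heading), so there is nothing to compare against. Your argument is correct and self-contained: the inclusion and weighted-homogeneity are immediate, the regular-sequence claim follows because each $h_i$ is a Weierstrass polynomial in the fresh variable $u_i$, and the rank comparison over the DVR $\mathbb{C}\{u_0\}$ forces the surjection $A\twoheadrightarrow\mathcal{O}_{C^\Gamma,\boldsymbol{0}}$ to be an isomorphism.

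One remark on the obstacle you flag. The convergence issue can be bypassed entirely. Since the $h_i$ are weighted-homogeneous polynomials, it suffices to prove the graded isomorphism
\[
\mathbb{C}[u_0,\dots,u_g]/(h_1,\dots,h_g)\ \xrightarrow{\ \sim\ }\ \mathbb{C}\bigl[t^{\overline{\beta}_0},\dots,t^{\overline{\beta}_g}\bigr]\subset\mathbb{C}[t],
\]
where your rank argument goes through over the polynomial ring $\mathbb{C}[u_0]$ with no analysis needed; the analytic statement then follows by completing at the maximal ideal. Alternatively, iterated Weierstrass division with respect to $u_g,u_{g-1},\dots,u_1$ (each $h_i$ being $u_i$-regular of order $n_i$) directly yields the spanning set $\{u_1^{a_1}\cdots u_g^{a_g}:0\le a_i<n_i\}$ with automatic convergence, making the ad hoc term-order argument unnecessary.
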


In the sequel, we will always assume that the semigroup \( \Gamma \) fulfills \eqref{eq:lemma-semigroup}. Since \( C^\Gamma \) is an isolated singularity one has the existence of the miniversal deformation of \( C^\Gamma \), see \cite[Add.]{teissier-appendix}. After \cref{thm:teissier-appendix1}, every curve with semigroup \( \Gamma \), is analytically isomorphic to one of the fibers of the miniversal deformation of \( C^\Gamma \). Moreover, Teissier \cite[\S I.2]{teissier-appendix} proves the existence of the miniversal semigroup constant deformation of \( C^\Gamma \).

\begin{theorem}[{\cite[Thm. 3]{teissier-appendix}}] \label{thm:teissier}
There exists a miniversal semigroup constant deformation
\[ G : (Y_\Gamma, C^\Gamma) \longrightarrow (\mathbb{C}^{\tau_{-}}, \boldsymbol{0}) \]
of \( (C^\Gamma, \boldsymbol{0}) \) with the property that, for any branch \( (C, \boldsymbol{0}) \) with semigroup \( \Gamma \), there exists \( \boldsymbol{v}_C \in \mathbb{C}^{\tau_{-}} \) such that \( (G^{-1}(\boldsymbol{v}_C), \boldsymbol{0}) \) is analytically isomorphic to \( (C, \boldsymbol{0}) \).
\end{theorem}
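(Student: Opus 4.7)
The plan is to exploit the quasi-homogeneous complete intersection structure of $(C^\Gamma,\boldsymbol{0})$ provided by \cref{prop:teissier-appendix2}. Assigning the weights $\overline{\beta}_0,\dots,\overline{\beta}_g$ to the variables $u_0,\dots,u_g$, the defining equations $h_i$ of \cref{eq:monomial-complete-intersection} are quasi-homogeneous of respective weights $n_i \overline{\beta}_i$. This induces a natural $\mathbb{C}^*$-action on the ambient $(Y,\boldsymbol{0})=(\mathbb{C}^{g+1},\boldsymbol{0})$ preserving $C^\Gamma$, and hence a $\mathbb{C}^*$-action on the module $T^1_{C^\Gamma}$ of infinitesimal deformations. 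Since $C^\Gamma$ is an isolated complete intersection singularity, $T^1_{C^\Gamma}$ is a finite-dimensional complex vector space and Grauert's theorem provides the miniversal deformation of $(C^\Gamma,\boldsymbol{0})$, whose base is a germ of dimension $\tau=\dim_{\mathbb{C}} T^1_{C^\Gamma}$. The first step is to decompose $T^1_{C^\Gamma}$ into weight subspaces under the $\mathbb{C}^*$-action, $T^1_{C^\Gamma}=T^1_+\oplus T^1_0 \oplus T^1_-$, and to take a weighted-homogeneous basis of each summand.

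Next, I would define $Y_\Gamma \to \mathbb{C}^{\tau_-}$ as the restriction of the miniversal deformation of $C^\Gamma$ to the linear subspace of the base corresponding to the negative weight part $T^1_-$, where $\tau_-:=\dim_{\mathbb{C}} T^1_-$. Concretely, one perturbs each equation $h_i$ by adding terms $\sum_j v_{ij}\, m_{ij}$, where the $m_{ij}$ are monomials in $u_0,\dots,u_g$ of weight strictly greater than $n_i \overline{\beta}_i$ forming a basis of $T^1_{C^\Gamma}$ in negative weights, and $v_{ij}$ are the new parameters. The key technical point, and the main obstacle, is to prove that this deformation is semigroup constant, i.e.\ that each fiber $G^{-1}(\boldsymbol{v})$ has semigroup $\Gamma$. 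The argument uses the fact that on each fiber the leading weighted-homogeneous part of the defining equations still coincides with $(h_1,\dots,h_g)$; hence the initial ideal of the parametrization of the fiber with respect to the $t$-adic filtration agrees with that of $C^\Gamma$, and the values of the curve valuation $v_{C_{\boldsymbol{v}}}$ are still generated by $\overline{\beta}_0,\dots,\overline{\beta}_g$. Here one must be careful to set up the weighted-Newton argument so that negative-weight perturbations never lower the valuation of any member of $\mathcal{O}_{C_{\boldsymbol{v}},\boldsymbol{0}}$.

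Finally, I would establish the completeness statement: for any branch $(C,\boldsymbol{0})$ with semigroup $\Gamma$, there is a value $\boldsymbol{v}_C\in\mathbb{C}^{\tau_-}$ with $G^{-1}(\boldsymbol{v}_C)\cong C$. By \cref{thm:teissier-appendix1}, $(C,\boldsymbol{0})$ is a fiber of some one-parameter analytic deformation of $(C^\Gamma,\boldsymbol{0})$; by the universal property of the miniversal deformation, this deformation is induced, up to base change, from the miniversal one. The semigroup-constancy hypothesis then forces the classifying map to land in the negative-weight stratum: indeed, any non-zero component along $T^1_+$ or $T^1_0$ would, by the weighted initial-form analysis of the previous step run in reverse, produce a generator of the curve valuation of strictly smaller weight than some $\overline{\beta}_i$, contradicting that the semigroup equals $\Gamma$. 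Thus the classifying map factors through $\mathbb{C}^{\tau_-}$, yielding the required point $\boldsymbol{v}_C$. Miniversality of $G$ among semigroup-constant deformations then follows from the miniversality of the original deformation restricted to a $\mathbb{C}^*$-invariant stratum, together with the fact that $T^1_-$ is the tangent space to the semigroup-constant stratum in $T^1_{C^\Gamma}$.
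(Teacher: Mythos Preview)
The paper does not give its own proof of this statement: it is quoted verbatim as \cite[Thm.~3]{teissier-appendix} and the subsequent paragraphs merely recall Teissier's explicit construction of the deformation via a graded monomial basis of $T^1_{C^\Gamma,\boldsymbol{0}}$ and the equations \eqref{eq:monomial-curve}, referring to \cite[Thm.~2.10]{teissier-appendix} for the details. Your outline reproduces exactly this construction in its first two steps, so at the level of the setup there is nothing to compare: both you and the paper are following Teissier.

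Where you go beyond the paper is in sketching why the construction works (semigroup-constancy and completeness), and here two points deserve more care. First, your completeness argument claims that any nonzero component along $T^1_+$ or $T^1_0$ would produce a valuation generator of strictly smaller weight than some $\overline{\beta}_i$; this is not quite right for $T^1_0$, where the perturbing monomials have the \emph{same} weight $n_i\overline{\beta}_i$ as $h_i$ and so do not lower any initial form. The way Teissier handles this is rather via the $\mathbb{C}^*$-action on the base of the full miniversal deformation: the semigroup-constant locus is $\mathbb{C}^*$-invariant, and the contraction/expansion behaviour in each weight-graded direction lets one normalize away the nonnegative-weight coordinates up to analytic isomorphism of the fiber. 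Second, your use of \cref{thm:teissier-appendix1} together with miniversality only produces a \emph{germ} of a classifying map near $0$, whereas the fiber isomorphic to $(C,\boldsymbol{0})$ sits over a nonzero parameter value; one needs the $\mathbb{C}^*$-action (which gives negative weights on the $v_r$) to rescale this fiber into any neighbourhood of the origin, so that the germ statement suffices. With these two adjustments your sketch becomes the standard argument.
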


\jump

Since \( C^\Gamma \) is a complete intersection and we have equations for \( C^\Gamma \), the miniversal semigroup constant deformation can be made explicit, see \cite[\S I.2]{teissier-appendix}. Indeed, consider the Tjurina module of the complete intersection \( (C^\Gamma, \boldsymbol{0}) \),
\[ T^1_{C^\Gamma, \boldsymbol{0}} = \mathcal{O}_{Y, \boldsymbol{0}}^g\Big/\left(J\boldsymbol{h} \cdot \mathcal{O}_{Y, \boldsymbol{0}}^{g+1} + \langle h_1, \dots, h_g \rangle \cdot \mathcal{O}_{Y, \boldsymbol{0}}^g\right), \]
where \( J \boldsymbol{h} \cdot \mathcal{O}_{Y, \boldsymbol{0}}^{g+1} \) is the submodule of \( \mathcal{O}_{Y, \boldsymbol{0}}^g \) generated by the columns of the Jacobian matrix of the morphism \( \boldsymbol{h} = (h_1, \dots, h_g) \). Since \( (C^\Gamma, \boldsymbol{0}) \) is an isolated singularity, \( T^1_{C^\Gamma, \boldsymbol{0}} \) is a finite-dimensional \( \mathbb{C} \)--vector space of dimension usually denoted by \( \tau \), the Tjurina number of the singularity. Moreover, since \( (C^\Gamma, \boldsymbol{0}) \) is Gorenstein, \(\tau = 2 \cdot \# (\mathbb{Z}_{>0} \setminus \Gamma)\), see \cite[Prop. 2.7]{teissier-appendix}.

\jump

Let \( \boldsymbol{\phi}_1, \dots, \boldsymbol{\phi}_\tau \) be a basis of \( T^1_{C^\Gamma, \boldsymbol{0}} \). It is easy to see that we can take representatives for the vectors \( \boldsymbol{\phi}_r \) in \( \mathcal{O}_{Y}^g \) having only one non-zero monomial entry \( \phi_{r, i} \). Since \( (C^\Gamma, \boldsymbol{0}) \) is quasi-homogeneous, we can endow \( T^1_{C^\Gamma, \boldsymbol{0}} \) with a structure of graded module in such a way that, using only a certain subset of the basis \( \boldsymbol{\phi}_1, \dots, \boldsymbol{\phi}_{\tau_{-}} \), \( \tau_{-} < \tau \), \( Y_\Gamma \) is defined from \cref{eq:monomial-complete-intersection} as the subvariety of \( \mathbb{C}^{g+1} \times \mathbb{C}^{\tau_{-}} \), with coordinates \( (u_i, v_j) \), given by the equations
\begin{equation} \label{eq:monomial-curve}
  H_i := h_i + \sum_{r = 1}^{\tau_{-}} v_r \phi_{r, i}(u_0, \dots, u_g) = 0,  \quad 1 \leq i \leq g,
\end{equation}
with the weighted degree of \(\phi_{r,i} \) strictly bigger than \( n_i \overline{\beta}_i\), see \cite[Thm. 2.10]{teissier-appendix} for the exact construction.

\jump

Take now the classes of the vectors \( u_2 \boldsymbol{e}_1 := (u_2, 0, \dots, 0), \dots, u_g \boldsymbol{e}_{g-1} := (0, \dots, u_g, 0) \) on \( T^1_{C^\Gamma, \boldsymbol{0}} \) which are linear independent over \( \mathbb{C} \). If we assume now that \( \Gamma \) is the semigroup of a plane branch, then \( n_i \overline{\beta}_i < \overline{\beta}_{i+1}, 1 \leq i < g \), and the vectors \( u_2 \boldsymbol{e}_1, \dots, u_g \boldsymbol{e}_{g-1} \) are part of the miniversal semigroup constant deformation of \( (C^\Gamma, \boldsymbol{0}) \). Notice also that, a fiber of the semigroup constant deformation will be a plane branch, i.e. will have embedding dimension equal to two, if and only if, the coefficients of the vectors \( u_i \boldsymbol{e}_{i-1}, i = 1, \dots, g \) in the deformation from \cref{eq:monomial-curve} are all different from zero.

\begin{proposition} \label{prop:deformation}
Let \( f : (\mathbb{C}^2, \boldsymbol{0}) \longrightarrow (\mathbb{C}, 0) \) be a plane branch. Let \( E_i \) be a rupture divisor of the minimal resolution of \( f \) with divisorial valuation \( v_i \). Then, for any \( v > n_i \overline{\beta}_i \) there exists a one-parameter \( \mu \)-constant deformation of \( f \) of the form \( f + tg_t \) such that \( v_i(g_t) = v \), for all values of the parameter \( t \).
\end{proposition}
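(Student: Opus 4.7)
The plan is to apply Teissier's miniversal semigroup-constant deformation of the monomial curve $C^\Gamma$ (\cref{thm:teissier}), where $\Gamma$ denotes the semigroup of the plane branch $f$. By that theorem, $f$ is isomorphic to the fiber of $G$ at a point $\boldsymbol{v}_f \in \mathbb{C}^{\tau_{-}}$; since $f$ has embedding dimension two, the coefficients in $\boldsymbol{v}_f$ of the basis vectors $u_j \boldsymbol{e}_{j-1}$, $j = 1, \dots, g$, are all non-zero, and this non-vanishing is an open condition. The strategy is to choose a suitable direction $\boldsymbol{w} \in \mathbb{C}^{\tau_{-}}$ so that the one-parameter family $\boldsymbol{v}(t) := \boldsymbol{v}_f + t \boldsymbol{w}$ yields a semigroup-constant (hence $\mu$-constant) deformation whose fibers, for $|t|$ small, remain plane branches and whose plane curve equations take the desired form $f + t g_t$.

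First I would check that every integer $v > n_i \overline{\beta}_i$ lies in the semigroup $\Gamma_i$ of the divisorial valuation $v_i$. This reduces to a conductor computation: applying the formula $c = n_g \overline{\beta}_g - \beta_g - n + 1$ to the plane branch whose semigroup is $\Gamma_i$ shows that the conductor of $\Gamma_i$ does not exceed $n_i \overline{\beta}_i$, so every integer strictly above this bound belongs to $\Gamma_i$. Hence there exist elements of $\mathcal{O}_{X, \boldsymbol{0}}$ with $v_i$-valuation exactly $v$. The next step is to translate this into a choice of $\boldsymbol{w}$ among the basis vectors $\{\boldsymbol{\phi}_r\}$ of the graded Tjurina module $T^1_{C^\Gamma, \boldsymbol{0}}$, whose non-zero entries appearing in \eqref{eq:monomial-curve} are monomials in $u_0, \dots, u_g$ of weighted degree strictly greater than $n_j \overline{\beta}_j$ in the $j$-th position. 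Exploiting the grading on $T^1_{C^\Gamma, \boldsymbol{0}}$, one selects a $\boldsymbol{\phi}_r$ whose contribution to the plane curve equation, after eliminating $u_2, \dots, u_g$ from the deformed equations using the plane branch structure of the fiber, is a term whose $v_i$-valuation is exactly $v$.

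With $\boldsymbol{w}$ so chosen, the miniversal property guarantees that the resulting family is semigroup-constant, and the plane branch condition persists for $|t|$ small by continuity of the non-vanishing coefficients, giving the required $\mu$-constant deformation. The main obstacle is the interplay between the natural Puiseux-type grading on $T^1_{C^\Gamma, \boldsymbol{0}}$ (with weights $\overline{\beta}_0, \dots, \overline{\beta}_g$ on $u_0, \dots, u_g$) and the divisorial valuation $v_i$ on the resulting plane curve. One must verify that no cancellations arise during the elimination of $u_2, \dots, u_g$ that would push $v_i(g_t)$ strictly above $v$; this requires a careful analysis of how a perturbation of the $i$-th equation $H_i$ in \eqref{eq:monomial-curve} propagates, through the Puiseux parametrization of $f$, to a perturbation of $f$ in the plane.
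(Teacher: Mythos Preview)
Your overall strategy coincides with the paper's: both use Teissier's monomial curve and produce a semigroup-constant deformation of the complete-intersection model, then eliminate $u_2,\dots,u_g$ to obtain a plane curve deformation $f+tg_t$. Your conductor observation (that every $v>n_i\overline{\beta}_i$ lies in the value semigroup of $v_i$) is correct and is also the first step in the paper's argument.

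The gap is in how you choose the perturbing direction. You insist on taking $\boldsymbol{w}$ among the \emph{miniversal basis vectors} $\boldsymbol{\phi}_r$ and then argue by the grading on $T^1_{C^\Gamma,\boldsymbol{0}}$ that some $\boldsymbol{\phi}_r$ produces, after elimination, a perturbation with $v_i$-value exactly $v$. This is both harder than necessary and not established: the $\boldsymbol{\phi}_r$ are graded by the weights $\overline{\beta}_0,\dots,\overline{\beta}_g$, which is not the $v_i$-valuation, and there is no a priori reason that for every $v>n_i\overline{\beta}_i$ one of the finitely many $\boldsymbol{\phi}_r$ lands on the correct $v_i$-value after elimination. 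The paper avoids this entirely: it does \emph{not} restrict to the miniversal parameters but simply adds an explicit monomial $t\,u_0^{\alpha_0}\cdots u_i^{\alpha_i}$ (in the first $i{+}1$ variables only) to the $i$-th equation. Any such monomial of weighted degree $>n_i\overline{\beta}_i$ keeps the semigroup constant, and that is all one needs for $\mu$-constancy.

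The second point you flag as an ``obstacle'' --- that cancellations during elimination might push $v_i(g_t)$ above $v$ --- is precisely what the paper resolves by making an explicit choice. The exponents $(\alpha_0,\dots,\alpha_i)$ are obtained by writing the \emph{shifted} value $v':=n_i\overline{m}_i+(v-n_i\overline{\beta}_i)$ in terms of the generators of the semigroup of $v_i$; the shift compensates exactly for the extra factors $f_{i+1}^{\,n_{i+1}-1}\cdots f_g^{\,n_g-1}$ that the elimination introduces. With this in hand the paper computes the leading term of $g_0$ to be $f_0^{\alpha_0}\cdots f_i^{\alpha_i} f_{i+1}^{\,n_{i+1}-1}\cdots f_g^{\,n_g-1}$ and checks directly that its $v_i$-value is $v$ and that every other term (in $g_0$ or in higher powers of $t$) has strictly larger $v_i$-value. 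Your sketch identifies this step but does not supply the mechanism --- the shift $v\mapsto v'$ and the explicit choice of monomial in $u_0,\dots,u_i$ --- that makes the computation go through.
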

\begin{proof}
By the above discussion, there exists a fiber of the miniversal semigroup constant deformation of the monomial curve \( C^\Gamma \) analytically equivalent to the germ of curve \( (C, \boldsymbol{0}) \) defined by \( f \). Precisely, with the notations above,
\begin{equation} \label{eq:prop-deformation}
C : h_j + \lambda_i u_{j+1} + l_j = 0 \quad \textnormal{for} \quad 0 \leq j \leq g,
\end{equation}
with \( \lambda_{i} \neq 0 \) and where we set \( u_{g+1} := 0 \) for convenience. Here, the elements \( l_j \) are linear combinations of the non-zero components of the elements \( \boldsymbol{\phi}_1, \dots, \boldsymbol{\phi}_{\tau_{-}} \) that are different from the elements \( u_2 \boldsymbol{e}_1, \dots, u_g \boldsymbol{e}_{g-1} \), see \cref{eq:monomial-curve}. Let now \( v' := n_i \overline{m}_i + v - n_i \overline{\beta}_i \), then \( v' \) belongs to the semigroup
\[ \Gamma_{i+1} = \langle n_1 n_2 \cdots n_{i}, n_2 \cdots n_{i} \overline{m}_{1}, \dots, n_{i} \overline{m}_{i-1}, \overline{m}_{i} \rangle\]
of the divisorial valuation \( v_i \), see \cref{eq:semigroup-vi}. Indeed, notice that \( v' \) is strictly larger than the conductor of \( \Gamma_{i+1} \). Therefore, there exists \( (\alpha_0, \dots, \alpha_i) \in \mathbb{Z}_{+}^{i+1} \) such that
\[ v' = \alpha_0 n_1 n_2 \cdots n_{i} + \alpha_1 n_2 \cdots n_{i} \overline{m}_{1} + \cdots + \alpha_g \overline{m}_{i}. \]
Consider the one-parameter deformation of \( C \) given by deforming the \( i \)--th equation of \eqref{eq:prop-deformation} in the following way
\[ h_i + \lambda_i u_{i+1} + l_i + t u_0^{\alpha_0} u_1^{\alpha_1} \cdots u_i^{\alpha_i} = 0. \]
First, we must check that this deformation is semigroup constant, which is equivalent to seeing that \( \deg (u_0^{\alpha_0} \cdots u_g^{\alpha_g}) > n_i \overline{\beta}_i \). Indeed,
\[ \begin{split}
\deg(u_0^{\alpha_0} u_1^{\alpha_1} \cdots u_g^{\alpha_g}) & = \alpha_0 \overline{\beta}_0 + \alpha_1 \overline{\beta}_1 + \cdots + \alpha_g \overline{\beta}_g \\
& = e_i(\alpha_0 n_1 n_2 \cdots n_i + \alpha_1 n_2 \cdots n_i \overline{m}_1 + \cdots + \alpha_g \overline{m}_i) \\
& = n_i \overline{\beta}_i + e_i (v - n_i \overline{\beta}_i) > n_i \overline{\beta}_i.
\end{split}
\]
Now, eliminating the variables \( u_2, u_3, \dots, u_g \) successively from \cref{eq:prop-deformation} one gets a deformation of \( f \) of the form \( f + tg_0 + t^2g_1 + \cdots \), where the dots mean higher-order terms on \( t \). For plane branches, a family with constant semigroup is \( \mu \)-constant, so this proves the first part of the lemma.

\jump

For the second part, it is not hard to see that when the first \( u_2, u_3, \dots, u_{i-1} \) variables are eliminated, \( u_i = 0 \) defines a plane branch \( f_i \) such that \( v_f(f_i) = \overline{\beta}_i = \deg u_i \). Finally, one can see that
\[ g_0 = f_0^{\alpha_0} \cdots f_i^{\alpha_i} f_{i+1}^{n_{i+1}-1} \cdots f_{g}^{n_g-1} + \cdots \]
with precisely
\begin{equation} \label{eq:987}
v_i(f_0^{\alpha_0} \cdots f_i^{\alpha_i} f_{i+1}^{n_{i+1}-1} \cdots f_{g}^{n_g-1}) = v.
\end{equation}
In order to check \cref{eq:987}, recall that \( v_i(f_j) = n_{j-1} \cdots n_i \overline{m}_i \), for \( j \geq i \), and then
\[
\begin{split}
v_i(f_0^{\alpha_0} \cdots f_i^{\alpha_i} f_{i+1}^{n_{i+1}-1} \cdots f_g^{n_g -1}) & = v' + (n_{i+1}-1) v_i(f_{i+1}) + \cdots + (n_g - 1) v_i(f_g) \\
    & = v' + (n_i - 1)n_i \overline{m}_i + \cdots + (n_g - 1) n_{g-1} \cdots n_i \overline{m}_i \\
    & = n_g n_{g-1} \cdots n_i \overline{m}_i + v - n_i \overline{\beta}_i = v.
\end{split}
\]
A similar computation shows that any other term in \( g_0 \), or in higher-order powers of \( t \), has a value strictly larger than \( v \) for the valuation \( v_i \) of the divisor \( E_i \).
\end{proof}

\subsection{Generic \texorpdfstring{\( b \)}{b}-exponents} \label{sec:generic-bexponents}

Let \( f : (\mathbb{C}^2, \boldsymbol{0}) \longrightarrow (\mathbb{C}, 0) \) be a germ of an holomorphic function defining an irreducible plane curve with semigroup \( \Gamma = \langle \overline{\beta}_0, \dots, \overline{\beta}_g \rangle \). Given \( E_i \) a rupture divisor of the minimal resolution of \( f \), take \( \sigma_{i, \nu}(\omega) \) a non-resonant candidate \( b \)-exponent associated with \( E_i \), {see \cref{eq:def-sigma} and \cref{def:non-resonant}.

\begin{lemma} \label{lemma:non-resonance}
A candidate \( b \)-exponent \( \sigma_{i, \nu}(\omega) \) is non-resonant, if and only if, \( \overline{\beta}_i \sigma_{i, \nu}(\omega) \not\in \mathbb{Z} \) and \( e_{i-1} \sigma_{i, \nu}(\omega) \not\in \mathbb{Z} \).
\end{lemma}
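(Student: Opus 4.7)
The plan is first to reduce non-resonance to a divisibility condition on the denominator of $\sigma_{i,\nu}(\omega)$, and then to read off the answer from the combinatorics of the minimal embedded resolution of a plane branch.

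Because $v_j(\overline{\omega}_\nu) \in \mathbb{Z}$ and $\varepsilon_{j,\nu}(\omega) = -N_j\sigma_{i,\nu}(\omega) + v_j(\overline{\omega}_\nu)$ by \cref{eq:definition-epsilons}, one has $\varepsilon_{j,\nu}(\omega) \in \mathbb{Z}$ iff $N_j\sigma_{i,\nu}(\omega) \in \mathbb{Z}$, so non-resonance of $\sigma := \sigma_{i,\nu}(\omega)$ is equivalent to $N_j\sigma \notin \mathbb{Z}$ for every $D_j \neq E_i$ meeting $E_i$. Writing $\sigma = p/q$ in lowest terms, the identity $\sigma = (v_i(\omega)+1+\nu)/N_i$ forces $q \mid N_i = n_i\overline{\beta}_i$, and $N_j\sigma \in \mathbb{Z}$ iff $q \mid N_j$. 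The lemma therefore reduces to the purely arithmetic assertion that, for every $q \mid N_i$,
\[
  q \mid N_j \text{ for some } D_j \text{ adjacent to } E_i \ \Longleftrightarrow \ q \mid \overline{\beta}_i \text{ or } q \mid e_{i-1}.
\]

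Next, I would invoke the classical description of the dual graph of the minimal embedded resolution of a plane branch with semigroup $\langle \overline{\beta}_0,\ldots,\overline{\beta}_g \rangle$ (see \cite[\S 5]{casas} or \cite[Ch.~8]{ctc-wall}): the rupture $E_i$ has exactly three neighbours $D_{j_1}, D_{j_2}, D_{j_3}$, whose multiplicities are determined by the semigroup. Concretely, $N_{j_1} = \overline{\beta}_i$ (the free end of the dead-end arm created when resolving the $i$-th characteristic pair); $N_{j_2}$ lies on the chain inherited from the preceding resolution steps, all of whose multiplicities are divisible by $e_{i-1}$, with $\gcd(N_{j_2}, N_i) = e_{i-1}$ (and simply $N_{j_2} = n = e_0$ when $i=1$); and, for $i < g$, $N_{j_3} = N_i + e_i$ arises from blowing up the smooth point of $E_i$ through which the strict transform passes with multiplicity $e_i$, so $\gcd(N_{j_3}, N_i) = e_i$, while for $i = g$ the third neighbour is the strict transform of $f$ itself with $N_{j_3} = 1$.

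With these three gcds recorded, the verification is immediate. For $(\Rightarrow)$, non-resonance at $D_{j_1}$ gives $\overline{\beta}_i\sigma \notin \mathbb{Z}$, while $e_{i-1}\sigma \in \mathbb{Z}$ would force $q \mid e_{i-1} \mid N_{j_2}$, contradicting non-resonance at $D_{j_2}$. For $(\Leftarrow)$, assuming $q \nmid \overline{\beta}_i$ and $q \nmid e_{i-1}$: at $D_{j_1}$ the conclusion is immediate; at $D_{j_2}$, $q \mid N_{j_2}$ combined with $q \mid N_i$ would force $q \mid \gcd(N_{j_2}, N_i) = e_{i-1}$; at $D_{j_3}$ with $i < g$ the same argument yields $q \mid e_i$, hence $q \mid \overline{\beta}_i$ since $e_i \mid \overline{\beta}_i$; and for $i = g$, $q \mid N_{j_3} = 1$ forces $q = 1 \mid \overline{\beta}_i$, all contradictions. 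The main obstacle is thus the combinatorial identification of the three adjacency multiplicities at $E_i$, in particular the identity $\gcd(N_{j_2}, N_i) = e_{i-1}$; once that is in hand, the arithmetic essentially writes itself, relying only on $N_i = \overline{m}_i\, e_{i-1}$ together with $\gcd(\overline{m}_i, n_i) = 1$.
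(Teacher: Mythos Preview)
Your overall strategy matches the paper's: reduce non-resonance to the condition $N_j\sigma\notin\mathbb{Z}$ for the neighbours $D_j$ of $E_i$, then read this off from the combinatorics of the resolution. However, your combinatorial input contains a factual error. You assert that one neighbour of the rupture $E_i$ has multiplicity \emph{equal} to $\overline{\beta}_i$, identifying it with the free end of the dead arm. This is false in general: for $f=y^3-x^7$ the rupture has $N=21$ and its three neighbours have multiplicities $6$, $14$, and $1$; the divisor with $N=7=\overline{\beta}_1$ sits two steps away along the chain $E_5-E_4-E_3$. What is true, and what the paper invokes from \cite[Prop.~8.5.3]{ctc-wall}, is that after relabelling the two \emph{preceding} neighbours one has $\gcd(N_{j_1},N_i)=\overline{\beta}_i$ and $\gcd(N_{j_2},N_i)=e_{i-1}$. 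Since your argument only ever uses these gcd's (the implications $q\mid N_i$ and $q\mid N_{j_1}\Rightarrow q\mid\overline{\beta}_i$, etc.), your proof survives once you replace the incorrect equality $N_{j_1}=\overline{\beta}_i$ by the correct gcd statement.

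One further remark on economy: the paper dispenses with the third neighbour altogether. By \cref{prop:epsilons} the three $\varepsilon_{j,\nu}(\omega)$ sum to an integer (up to the integral contribution $\sum\delta_{k,\nu}$), so if two of them are non-integers, so is the third. This lets the paper check only the two preceding divisors and avoid your separate case analysis for $D_{j_3}$ (distinguishing $i<g$ from $i=g$). Your treatment of $D_{j_3}$ is correct, but unnecessary once this observation is made.
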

\begin{proof}
The candidate \( \sigma_{i, \nu}(\omega) \) is non-resonant if \( \varepsilon_{j, \nu}(\omega) \not\in \mathbb{Z} \), for all \( D_j \cap E_i \neq \emptyset, D_j \in \textrm{Supp}(F_\pi) \). By the definition of \( \varepsilon_{j, \nu}(\omega) \), this is the same as \( N_j \sigma_{i, \nu}(\omega) \not\in \mathbb{Z} \). Since for plane branches there are only three divisors \( D_1, D_2, D_3 \) crossing \( E_i \) in the support of \( F_\pi \), by \cref{prop:epsilons}, and since the \( \delta_{k, \nu}(\omega) \) are integers, it is enough to check the non-resonance condition for two of the crossing divisors. Therefore, assume \( D_1, D_2 \) are the divisors preceding \( E_i \) in the minimal resolution. Hence, \( N_1, N_2 < N_i = n_i \overline{\beta}_i \) and \( N_j \sigma_{i, \nu} \not\in \mathbb{Z} \) is equivalent to \( \gcd(N_i, N_j) \sigma_{i, \nu} \not\in \mathbb{Z} \), \( j = 1, 2 \). However, after a possible reordering, \( \gcd(N_i, N_1) = \overline{\beta}_i \) and \( \gcd(N_i, N_2) = e_{i-1} \), see \cite[Prop. 8.5.3]{ctc-wall}.
\end{proof}

\newcommand*{\bigbigcup}{\mathop{\scalebox{1.5}{\ensuremath{\bigcup}}}}%

For the rest of the section, \( \omega \in \Gamma(X, \Omega^2_X) \) will be a fixed top differential form such that \( \omega = g \dd x \wedge \dd y \) with \( g(\boldsymbol{0}) \neq 0 \). For simplicity, one can take \( \omega = \dd x \wedge \dd y \). Following \cref{sec:plane-branches}, we can write the candidates associated with such \( \omega \) for each rupture divisor \( E_i \) in terms of the semigroup \( \Gamma \) in the following way,
\[ \sigma_{i, \nu}(\omega) = \frac{m_i + n_1 \cdots n_i + \nu}{n_i \overline{\beta}_i}, \qquad \nu \in \mathbb{Z}_{+}. \]

Notice now that the set of candidates from Yano's conjecture, see \cref{eq:yano-generating-series}, are exactly
\begin{equation} \label{eq:equation-candidates}
\bigbigcup_{i = 1}^g \bigg\{ \sigma_{i, \nu}(\omega) = \frac{m_i + n_1 \cdots n_i + \nu}{n_i \overline{\beta}_i} \ \bigg|\ 0 \leq \nu < n_i \overline{\beta}_i,\  \overline{\beta}_i \sigma_{i, \nu}(\omega), e_{i-1} \sigma_{i, \nu}(\omega) \not\in \mathbb{Z} \bigg\}.
\end{equation}
To see the equality between the exponents of \cref{eq:yano-generating-series} and the set in \eqref{eq:equation-candidates}, it is enough to notice that \( R'_i = \overline{\beta}_i\) and \(r'_i = \lceil (m_i + n_1 \cdots n_i)/n_i \rceil \). Hence, \( R_i = N_{i} = n_i \overline{\beta}_i = n_i R'_i \) and \( r_i = k_{i} + 1 = m_i + n_1 \cdots n_i = n_i r'_i \).

\jump

If we consider A'Campo formula in the case of plane branches, it is easy to see that there are exactly \( \mu \) elements in the sets from \eqref{eq:equation-candidates}, counted with possible multiplicities. Therefore, \( \lambda = \exp{(2 \pi \imath \sigma_{i, \nu}(\omega))} \) with \( i = 1, \dots, g \) and  \( 0 \leq \nu < n_i \overline{\beta}_i \) is the set of all the eigenvalues of the monodromy of a plane branch.

\begin{proposition} \label{prop:non-zero-section2}
Let \( \lambda = \exp(-2 \pi \imath \sigma_{i, \nu}(\omega)), 0 \leq \nu < n_i \overline{\beta}_i, \) be an eigenvalue of the monodromy. For any \( f_y : (\mathbb{C}^2, \boldsymbol{0}) \longrightarrow (\mathbb{C}, 0), y \in I_\delta, \) \( \mu \)-constant deformation of \( f \), there exists a differential form \( \eta_y \in \Gamma(X, \Omega^2_X) \) such that \( A^{\eta_y}_{\sigma_{i, 0}-1, 0}(t, y) \) is non-zero for all fibers of the deformation and \( \exp(-2 \pi \imath \sigma_{i,0}(\eta_y)) = \exp{(-2 \pi \imath \sigma_{i, \nu}(\omega)}) \).
\end{proposition}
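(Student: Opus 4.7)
My plan is to construct, for each $y \in I_\delta$, a form $\eta_y = g_y\, \dd x \wedge \dd y$ whose vanishing order along the rupture divisor $E_{i,y}$ is a prescribed value with the correct residue modulo $N_i$, and then apply \cref{prop:non-zero-section} fiberwise. Since $v_{i,y}(\eta_y) = k_i + v_{i,y}(g_y)$, the eigenvalue condition $\exp(-2\pi\imath \sigma_{i,0}(\eta_y)) = \exp(-2\pi\imath \sigma_{i,\nu}(\omega))$ is equivalent to $v_{i,y}(g_y) \equiv \nu \pmod{N_i}$; I would choose $v := \nu + k N_i$ with $k$ large enough that $v > n_i \overline{\beta}_i$, placing $v$ inside the semigroup $\Gamma_{i+1}$ of the divisorial valuation $v_i$ (see \cref{eq:semigroup-vi}).

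Following the second half of the proof of \cref{prop:deformation}, I would write $v = \alpha_0 n_1 \cdots n_i + \alpha_1 n_2 \cdots n_i \overline{m}_1 + \cdots + \alpha_i \overline{m}_i$ as a non-negative combination of the minimal generators of $\Gamma_{i+1}$. For each fiber $f_y$ I would select auxiliary irreducible plane branches $f_{0,y},\dots,f_{g,y}$ with $v_{f_y}(f_{j,y}) = \overline{\beta}_j$ and set
\[ g_y := f_{0,y}^{\alpha_0}\cdots f_{i,y}^{\alpha_i} f_{i+1,y}^{n_{i+1}-1}\cdots f_{g,y}^{n_g-1}, \]
so that the calculation yielding \eqref{eq:987} gives $v_{i,y}(g_y) = v$ for every $y$. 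To make the construction depend holomorphically on $y$ I would invoke \cref{thm:teissier}: the $\mu$-constant deformation $f_y$ lifts to a holomorphic map from $I_\delta$ into the base of the miniversal semigroup-constant deformation of the monomial curve $C^\Gamma$, and the branches $f_{j,y}$ can be produced from Teissier's data in a holomorphic fashion.

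It remains to verify fiberwise the hypotheses of \cref{prop:non-zero-section}: the zeroth piece $\overline{\eta}_{y,0}$ along $E_{i,y}$ is non-zero because $g_y \not\equiv 0$; non-resonance of $\sigma_{i,0}(\eta_y)$ reduces by \cref{lemma:non-resonance} to that of $\sigma_{i,\nu}(\omega)$, which is automatic since $\sigma_{i,\nu}(\omega)$ belongs to the candidate set \eqref{eq:equation-candidates}; and the support hypothesis of \cref{cor:epsilons} holds because the strict transform of each $f_{j,y}$ meets the exceptional locus of the resolution of $f_y$ only at points on divisors $E_{j,y}$ with $j \neq i$, so it avoids $E_{i,y}^\circ$. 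Combining these with \cref{prop:non-zero-section} applied at each fiber then yields $A^{\eta_y}_{\sigma_{i,0}-1,0}(t,y) \neq 0$ for all $y \in I_\delta$.

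The main obstacle I anticipate is the family version of the construction of $g_y$: both the equality $v_{i,y}(g_y) = v$ and the support condition of \cref{cor:epsilons} must hold for every $y$ simultaneously, not merely on a Zariski-open subset of fibers. This is precisely where one has to combine the equisingularity of the $\mu$-constant deformation (which stabilizes the combinatorics of the resolution and the intersection pattern of strict transforms with the rupture divisors) with the functoriality of Teissier's miniversal construction to produce holomorphic families $f_{j,y}$ of auxiliary branches with uniform combinatorial behavior along the family.
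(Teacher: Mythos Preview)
Your approach is essentially the paper's: choose $\nu' = \nu + kN_i$ beyond the conductor of the semigroup of $v_i$, take $\eta_y = h_y\,\dd x\wedge\dd y$ with $v_i(h_y)=\nu'$, observe that $\sigma_{i,0}(\eta_y)=\sigma_{i,\nu}(\omega)+k$ so non-resonance transfers via \cref{lemma:non-resonance}, and apply \cref{prop:non-zero-section}. The paper simply asserts the existence of such $h_y$ (and that it can be chosen to satisfy \cref{cor:epsilons}) rather than writing an explicit product.

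One small glitch: the extra factors $f_{i+1,y}^{n_{i+1}-1}\cdots f_{g,y}^{n_g-1}$ you borrow from \cref{prop:deformation} arise there from the elimination of variables in Teissier's monomial curve and are not needed here; including them shifts $v_i(g_y)$ away from $v$ (in \cref{prop:deformation} the semigroup decomposition is taken for $v'$, not for $v$). Simply set $g_y=f_{0,y}^{\alpha_0}\cdots f_{i,y}^{\alpha_i}$, which already gives $v_i(g_y)=v$ since the generators of $\Gamma_{i+1}$ are precisely the $v_i(f_{j,y})$ for $0\le j\le i$. Your concern about holomorphic dependence on $y$ is reasonable but not strictly necessary for the statement or for its use in \cref{thm:semicontinuity}: it suffices that, for each $y$, equisingularity fixes the semigroup $\Gamma_{i+1}$ and hence guarantees the existence of an $h_y$ with the prescribed valuation, chosen so that the strict transforms of the $f_{j,y}$ avoid $E_{i,y}^\circ$.
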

\begin{proof}
First, recall that a \( \mu \)-constant deformation is topologically trivial, see \cite{trang-ramanujam76}, and hence equisingular. Recall also that the semigroup \( \Gamma_i \) of the divisorial valuation \( v_i \) associated with the rupture divisor \( E_i \) with candidate exponent \( \sigma_{i, \nu}(\omega) \) is finitely generated, see \cref{eq:semigroup-vi}. Take \( k \gg 0 \), such that \( \nu' = \nu + k N_i \) is larger than the conductor of the semigroup \( \Gamma_i \). Now, let \( h_y \in \Gamma(X, \mathcal{O}_{X}) \) with \( v_i(h_y) = \nu' \) and define \( \eta_y = h_y \dd x \wedge \dd y \). Notice that \( h_y \) can always be chosen such that \( \eta_y \) satisfies \cref{cor:epsilons}. Then, because
\begin{equation*}
  \sigma_{i, 0}(\eta_y) = \frac{k_i + \nu' + 1}{N_i} = \frac{k_i + \nu + kN_i + 1}{N_i} = \sigma_{i, \nu}(\omega) + k,
\end{equation*}
the eigenvalues of the monodromy are the same, and \( \sigma_{i, 0}(\eta_y) \) is non-resonant since \( \sigma_{i, \nu}(\omega) \) is non-resonant by \cref{lemma:non-resonance}. Finally, the first piece \( \overline{\eta}_{y,0} \) of \( \eta_y \) associated with \( E_i \) is non-zero. Therefore, by \cref{prop:non-zero-section}, the locally constant section \( A^{\eta_y}_{\sigma_{i, 0} -1, 0}(t, y) \) defined by \( R_{i, 0}(\eta_y) \) is non-zero.
\end{proof}

We can now use the previous proposition together with \cref{prop:cycle} to construct dual bases of locally constant sections of the bundles \( H_1 \) and \( H^1 \) for the fibers of a one-parameter \( \mu \)-constant deformation of a plane branch.

\begin{theorem}[Semicontinuity] \label{thm:semicontinuity}
If \( f: (\mathbb{C}^2, \boldsymbol{0}) \longrightarrow (\mathbb{C}, 0) \) is a plane branch, the \( b \)-exponents of a one-parameter \( \mu \)-constant deformation of \( f \) depend upper-semicontinuously on the parameter.
\end{theorem}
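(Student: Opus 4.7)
The plan is to reduce the theorem to \cref{prop:semicontinuity-bexponent} by exhibiting, for every generalized eigenvector of the monodromy and every fiber of the \( \mu \)-constant deformation, a dual locally constant geometric section. First I would verify that, for a plane branch, the non-resonant candidates
\[ \sigma_{i, \nu}(\omega) = \frac{m_i + n_1 \cdots n_i + \nu}{n_i \overline{\beta}_i}, \qquad 1 \le i \le g,\ 0 \le \nu < n_i \overline{\beta}_i, \]
from \eqref{eq:equation-candidates} enumerate the eigenvalues of the monodromy with the correct multiplicities. This is a counting argument using A'Campo's formula \eqref{eq:acampo}: in the minimal embedded resolution of a plane branch only the rupture divisors \( E_1, \ldots, E_g \) contribute non-trivially (every other exceptional component has \( \chi(E_i^\circ) = 0 \)), and the total multiplicity coming from the rupture divisors is exactly \( \mu \).

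Next, fix such an eigenvalue \( \lambda = \exp(-2 \pi \imath \sigma_{i, \nu}(\omega)) \) and consider the given one-parameter \( \mu \)-constant deformation \( f_y \), \( y \in I_\eta \). By \cref{prop:non-zero-section2} there exists a form \( \eta_y \in \Gamma(X, \Omega^2_X) \), depending on \( (i, \nu) \), such that the locally constant geometric section \( A^{\eta_y}_{\sigma_{i, 0} - 1, 0}(t, y) \) is non-zero on every fiber and has the same associated eigenvalue \( \lambda \). In the minimal resolution of a plane branch each rupture divisor meets exactly three components of \( \textnormal{Supp}(F_\pi) \), so \( \chi(E_i^\circ) = -1 \) and \cref{prop:cycle} applies, producing a generalized eigenvector \( \gamma_\lambda(t, y) \) of the monodromy in \( j_* H_1(\overbar{X}_{i, t, y}, \mathbb{C}) \) that is dual to \( A^{\eta_y}_{\sigma_{i, 0} - 1, 0}(t, y) \) in the sense of \eqref{eq:duality}. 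Letting \( (i, \nu) \) vary over the candidates enumerated above, distinct rupture divisors produce cycles vanishing to disjoint subsets of the exceptional fiber and hence, together with \cref{prop:char-poly}, linearly independent eigenvectors; distinct candidates for a fixed rupture divisor produce distinct eigenvalues. This assembles a basis of \( H_1(\overbar{X}_{t,y}, \mathbb{C}) \) by eigenvectors of the monodromy. Because a non-zero dual geometric section has been produced on every fiber, \( \dim_{\mathbb{C}} H^1_{\gamma_\lambda}(y) = 1 \) for all \( y \in I_\eta \), and \cref{prop:semicontinuity-bexponent} applied to each \( \gamma_\lambda \) in the basis yields the upper-semicontinuity of every \( b \)-exponent of the deformation.

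The main difficulty is the uniformity in the parameter \( y \): one must guarantee both that the forms \( \eta_y \) from \cref{prop:non-zero-section2} yield dual geometric sections simultaneously on every fiber, and that the eigenvectors \( \gamma_\lambda(t, y) \) remain a basis as \( y \) varies. The former is precisely the content of \cref{prop:non-zero-section2}, whose proof in turn relies on the explicit semigroup-constant deformations of Teissier's monomial curve in \cref{prop:deformation} to keep the divisorial valuation along each rupture divisor under control throughout the family. The latter follows from the observation that the subspaces \( j_* H_1(\overbar{X}_{i, t, y}, \mathbb{C}) \) arise from the topological structure of the Milnor fibration, which is constant along any \( \mu \)-constant family, so the linear independence of the eigenvectors established on the central fiber persists on every nearby fiber.
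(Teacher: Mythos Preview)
Your proposal is correct and follows essentially the same route as the paper: use \cref{prop:non-zero-section2} to produce non-zero locally constant geometric sections on every fiber, invoke \cref{prop:cycle} (using \( \chi(E_i^\circ) = -1 \) for a plane branch) to obtain the dual eigenvectors, assemble these into a basis of \( H_1(\overbar{X}_t,\mathbb{C}) \) via the direct-sum decomposition over the rupture divisors, and then apply \cref{prop:semicontinuity-bexponent}.

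One small inaccuracy in your closing commentary: the proof of \cref{prop:non-zero-section2} does \emph{not} rely on \cref{prop:deformation}. It only needs that \( \nu' = \nu + kN_i \) exceeds the conductor of the semigroup \( \Gamma_{i+1} \) of the divisorial valuation \( v_i \), so that one can pick \( h_y \) with \( v_i(h_y) = \nu' \) on every fiber; Teissier's monomial-curve deformation from \cref{prop:deformation} is used later, in \cref{prop:non-zero-piece}, not here. This does not affect the logic of your argument, but you should correct the attribution.
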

\begin{proof}
For a fixed \( 1 \leq i \leq g \), let \( \lambda := \exp(-2 \pi \imath \sigma_{i, \nu}(\omega)), 0 \leq \nu < n_i \overline{\beta}_i \) be an eigenvalue of the monodromy with \( \sigma_{i, \nu}(\omega) \) from \eqref{eq:equation-candidates}. After \cref{prop:non-zero-section2}, there is a differential form \( \eta_y \) with \( \lambda = \exp{(-2 \pi \imath \sigma_{i, 0}(\eta_y))} \) such that there exists a non-zero locally constant section \( A^{\eta_y}_{\sigma_{i, 0}-1, 0}(t, y) \) for all values of the parameter \( y \). Since for plane branches \( \chi(E_i^\circ) = -1 \), we can apply \cref{prop:cycle} to this section, and for \( t \in T' \), we obtain the existence of \( \gamma_{\lambda}(t, y) \) a representative 1-cycle of an eigenvector of the monodromy of the subspace \( j_* H_1(\overbar{X}_{i, t}, \mathbb{C}) \) with eigenvalue \( \lambda \).

\jump

The set of homology classes of all such cycles for \( \sigma_{i, \nu}(\omega), 0 \leq \nu < n_i \overline{\beta}_i, i = 1, \dots, g, \) in \eqref{eq:equation-candidates} gives a basis of eigenvectors \( \gamma_{\lambda}(t, y) \) of the monodromy endomorphism which are dual to the corresponding \( A^{\eta_y}_{\sigma_{i, 0}-1, 0}(t, y) \), for all fibers of the \( \mu \)-constant deformation. Indeed, since we have exactly \( \mu \) cycles and all these subspaces \( j_* H_1(\overbar{X}_{j, t}, \mathbb{C}) \) are direct summands in \( H_1(\overbar{X}_t, \mathbb{C}) \), one has that
\[ H_1(\overbar{X}_t, \mathbb{C}) = j_* H_1(\overbar{X}_{1, t}, \mathbb{C}) \oplus j_* H_1(\overbar{X}_{2, t}, \mathbb{C}) \oplus \cdots \oplus j_* H_1(\overbar{X} _{g, t}, \mathbb{C}). \]
After \cref{prop:cycle}, any such \( A^{\eta_y}_{\sigma_{i, 0}-1, 0} \) is dual to the eigenvectors forming a basis of \( j_* H_1(\overbar{X}_{i, t}, \mathbb{C}) \). Finally, since any other vanishing cycle not in \( j_* H_1(\overbar{X}_{i, t}, \mathbb{C}) \) must vanish to a different rupture divisor, i.e. other than \( E_i \), we obtain the desired duality.

\jump

For this precise basis of eigenvectors of the monodromy we constructed, we have shown the existence of dual locally constant geometric sections for all fibers of the deformation. That is, using the notations from \cref{sec:semicontinuity}, for all \( \gamma_{\lambda}(t, y) \) in the basis, \( \dim_{\mathbb{C}} H^1_{\gamma_\lambda}(y) = 1 \), for all values of the parameter \( y \). Therefore, we can apply \cref{prop:semicontinuity-bexponent} and all the \( b \)-exponents of any one-parameter \( \mu \)-constant deformation depend upper-semicontinuously on the deformation parameter.
\end{proof}

Finally, Yano's conjecture will follow from \cref{thm:semicontinuity} and the following proposition. For any \( \nu \in \mathbb{Z}_{+} \), we can show that, generically in a one-parameter \( \mu \)-constant deformation of \( f \), the piece \( \overline{\omega}_\nu \) of degree \( \nu \) of \( \omega \) associated with a rupture divisor \( E_i \) is non-zero, and hence \( R_{i, \nu}(\omega) \) is non-zero.

\begin{proposition} \label{prop:non-zero-piece}
For any \( \sigma_{i, \nu}(\omega), \nu \in \mathbb{Z}_{+} \) non-resonant, there exists \( f_y : (\mathbb{C}^2, \boldsymbol{0}) \longrightarrow (\mathbb{C}, 0), y \in I_\delta, \) a one-parameter \( \mu \)-constant deformation of \( f \) such the locally constant section \( A^\omega_{\sigma_{i, \nu}-1, 0}(t, y) \) is non-zero for generic fibers of the deformation.
\end{proposition}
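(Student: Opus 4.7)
The plan is to split the argument into the trivial case $\nu = 0$ and the substantive case $\nu \geq 1$, and then in both cases to reduce the statement to the non-vanishing of the piece $\overline{\omega}_\nu$ along $E_i^\circ$ so that \cref{prop:non-zero-section} applies. For $\nu = 0$, the piece $\overline{\omega}_0$ is (up to a unit) the Jacobian factor $x_0^{k_i}$ restricted to $E_i^\circ$, which is identically non-zero because $\omega = g\, \dd x \wedge \dd y$ with $g(\boldsymbol{0}) \neq 0$. Hence one can take the trivial deformation $f_y = f$ and apply \cref{prop:non-zero-section} directly, after checking that $\omega$ satisfies the hypothesis of \cref{cor:epsilons} for $E_i$ (which holds whenever the strict transform of $f$ does not meet $E_i$; otherwise the argument is carried out on a multiple of $\omega$ exactly as in the proof of \cref{prop:non-zero-section2}).

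For $\nu \geq 1$, I would invoke \cref{prop:deformation} with the choice $v := n_i \overline{\beta}_i + \nu > n_i \overline{\beta}_i$ to produce a one-parameter $\mu$-constant deformation $f_y = f + y g_y$ with $v_i(g_y) = v$ for every $y \in I_\delta$. Because the deformation is $\mu$-constant, hence equisingular, the combinatorial type of the resolution is preserved, and one can fix an analytic family of resolutions $\pi_y$ such that the rupture divisor $E_i(y)$ varies analytically in $y$. Working in local coordinates $(x_0, x_1)$ near a general point of $E_i$ adapted to $f$ so that $\pi^* f = x_0^{N_i}$ and $\pi^* \omega = x_0^{k_i} \tilde{h}(x_0, x_1) \dd x_0 \wedge \dd x_1$, the pullback of $f_y$ becomes
\[
\pi^* f_y = x_0^{N_i}\bigl(1 + y\, x_0^\nu\, G(x_0, x_1) + O(y^2)\bigr),
\]
where $G(x_0, x_1) := \pi^* g_y\rvert_{y=0} / x_0^{N_i+\nu}$ is holomorphic with $G(0, x_1) \not\equiv 0$ by the exact-valuation condition in \cref{prop:deformation}.

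The key step is then to compute the first-order variation of the $\nu$-th piece $\overline{\omega}_\nu^{(y)}$. After the coordinate change $\tilde{x}_0 = x_0 (1 + y x_0^\nu G + O(y^2))^{1/N_i}$ that normalises $\pi_y^* f_y = \tilde{x}_0^{N_i}$, a direct expansion gives
\[
\pi_y^* \omega = \tilde{x}_0^{k_i} \left[\tilde{h}(\tilde{x}_0, x_1) - \tfrac{k_i+1}{N_i}\, y\, \tilde{x}_0^\nu\, G(\tilde{x}_0, x_1)\, \tilde{h}(\tilde{x}_0, x_1) + O(y^2) + (\text{corrections}) \right] \dd \tilde{x}_0 \wedge \dd x_1,
\]
so that the coefficient of $\tilde{x}_0^\nu$ in the bracket evaluated on $E_i^\circ$ equals
\[
H_\nu(y; x_1) = h_\nu(x_1) - \tfrac{k_i + 1}{N_i}\, y\, G(0, x_1)\, h_0(x_1) + O(y^2).
\]
Since $h_0(x_1) \not\equiv 0$ (the leading piece is the restriction of the Jacobian of $\pi$, which is a non-vanishing section on the open part of $E_i$) and $G(0, x_1) \not\equiv 0$, the linear-in-$y$ correction is a non-trivial function of $x_1$, so $H_\nu(y; x_1)$ is non-trivial for generic $y$. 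Consequently $\overline{\omega}_\nu^{(y)} \not\equiv 0$ on $E_i^\circ(y)$ for generic $y$, and \cref{prop:non-zero-section} yields the non-vanishing of $A^\omega_{\sigma_{i,\nu}-1,0}(t, y)$ for generic fibres.

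\textbf{Main obstacle.} The principal technical difficulty is the explicit first-order analysis in the change of variables: one must track both the derivatives of $\tilde{h}$ with respect to $x_0$ and the Jacobian factor $u_y^{-(k_i+1)/N_i}$ and check that the various contributions of order $y \tilde{x}_0^\nu$ do not conspire to cancel $-\tfrac{k_i+1}{N_i} G(0,x_1) h_0(x_1)$ identically. A clean way around this would be to verify, in a family of resolutions $\Pi : \mathcal{X} \longrightarrow X \times I_\delta$ of the total space, that the $\nu$-th Taylor coefficient of $\Pi^*\omega$ along the universal divisor over $E_i$ depends non-trivially on $y$, reducing the question to a transversality/linearisation statement that is governed solely by the fact that $v_i(g_y)$ is exactly $N_i + \nu$.
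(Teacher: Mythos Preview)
Your proposal follows essentially the same route as the paper: split off the trivial case $\nu=0$, and for $\nu\ge 1$ use the deformation from \cref{prop:deformation} with $v=N_i+\nu$, normalise $\pi^*f_y$ to $\tilde{x}_0^{N_i}$ by a change of the normal coordinate, and read off the $\nu$-th Taylor coefficient of $\pi^*\omega$ along $E_i^\circ$ to first order in $y$. The paper carries out exactly the computation you flag as your ``main obstacle'': expanding $x_0=\bar{x}(1-y\bar{x}^\nu\bar{u}_y)$ inside $x_0^{k_i}\,v(x_0,x_1)\,\dd x_0\wedge\dd x_1$ and collecting the terms of order $\bar{x}^{k_i+\nu}$, it finds that the linear-in-$y$ contribution is $-(k_i+\nu+1)\,a_{0,0}\,\bar{u}_y(0,x_1)$ (plus higher $z$-powers), which is manifestly nonzero since $a_{0,0}\neq 0$ and $\bar{u}_y(0,\cdot)\not\equiv 0$. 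Your displayed coefficient $-(k_i+1)/N_i$ misses the extra $\nu/N_i$ coming from differentiating $\tilde{x}_0^{\nu}$ in the Jacobian of the coordinate change; this is harmless for the argument, since any nonzero constant suffices, but it is worth correcting. With that adjustment your sketch matches the paper's proof.
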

\begin{proof}
Assume that \( \nu > 0 \). Let \( f_y = f + yg_y \) be the one-parameter \( \mu \)-constant deformation of \( f \) from \cref{prop:deformation} with \( v_i(g_y) = N_i + \nu = n_i \overline{\beta}_i + \nu \). Recall that, since the deformation is \( \mu \)-constant, all the fibers are equisingular. Thus, locally in \( \overbar{X}_i \), let \(x\) be a local defining equation for \(E_i^\circ\) and \( z \) the other coordinate. Then, we can write
\begin{equation}
f_y = x^{N_i} + y x^{N_i + \nu} u_y(x, z),
\end{equation}
since \( v_i(g_y) = N_i + \nu \) and where \( u_y(0, z) \) is not identically zero. Then, this is equal to \( x^{N_i}(1 + yx^{\nu} u_y) \) and the curves \(f_y\) can be written locally as \( \bar{x}^{N_i} \) for a new coordinate \( \bar{x} \).

\jump

Focusing on the differential form, we have that \( \overline{\omega} = x^{k_i} z^{b_i} v(x, z) \dd x \wedge \dd z, b_i \geq 0 \), with \( v(x, z) \) a unit. Now, expand \( v(x, z) \) in series and apply the change of coordinates \( x = \bar{x}(1-y\bar{x}^\nu \bar{u}_y(\bar{x}, z)) \) which comes from inverting \( \bar{x} = x(1+y x^\nu u_y(x, z))^{1/N_i} \) with respect to \( x \), and \( \bar{u}_y(0, z) \) is not identically zero. That is,
\begin{equation} \label{eq:eq666}
\overline{\omega} = \sum_{\alpha, \beta \geq 0} a_{\alpha, \beta} \bar{x}^{k_i + \alpha}(1-y \bar{x}^\nu \bar{u}_y)^{k_i + \alpha} z^{b_i + \beta} \dd \left(\bar{x}(1-y \bar{x}^\nu \bar{u}_y)\right) \wedge \dd z,
\end{equation}
notice that the differential form depends now on the deformation parameter \( y \). We have to check that, generically on \( y \), the \( \nu \)--th piece \( \overline{\omega}_{\nu} \) of \( \overline{\omega} \) is non-zero. In order to study \( \overline{\omega}_{\nu} \), we look at the terms of \( \overline{\omega} \) with degree \( k_i + \nu \) in \( \bar{x} \). Since \( \dd x \wedge \dd z = [1 - (\nu + 1) y \bar{x}^\nu \bar{u}_y - y x^{\nu + 1} \partial \bar{u}_y/\partial x] \dd \bar{x} \wedge \dd z \), the only relevant terms from \cref{eq:eq666} are,
\begin{equation*}
a_{\nu, \beta} \bar{x}^{k_i + \nu} z^{\beta}, \quad - a_{0, \beta} k_i y \bar{x}^{k_i + \nu} \bar{u}_y z^{\beta}, \quad - a_{0,\beta} (\nu + 1) y \bar{x}^{k_i + \nu} \bar{u}_y z^{\beta}.
\end{equation*}
Since \( a_{0, 0} \neq 0 \), \( \overline{\omega}_{\nu} \) is non-zero for \( y \neq 0, 0 < |y| \ll 1 \), as we wanted to show. Since \( \sigma_{i, \nu}(\omega) \) is non-resonant, and since the pull-back of \( \omega \) has exceptional support, we can apply \cref{prop:non-zero-section}. This implies that \( R_{i, \nu}(\omega) \), and hence \( A^\omega_{\sigma_{i, \nu}-1, 0}(t, y) \), is a non-zero cohomology class as required.

\jump

Finally, notice that for the case \( \nu = 0 \), it is enough to consider the trivial deformation since \( \overline{\omega}_{0} \) is always different from zero.
\end{proof}

\begin{theorem}[Yano's conjecture] \label{thm:yano}
Let \( f : (\mathbb{C}^2, \boldsymbol{0}) \longrightarrow (\mathbb{C}, 0) \) be a germ of a holomorphic function defining an irreducible plane curve with semigroup \( \Gamma = \langle \overline{\beta}_0, \overline{\beta}_1, \dots, \overline{\beta}_g \rangle \). Then, for generic curves in some \( \mu \)-constant deformation of \( f \), the \( b \)-exponents are
\begin{equation} \label{eq:generic-bexponents}
\bigbigcup_{i = 1}^g \bigg\{ \sigma_{i, \nu} = \frac{m_i + n_1 \cdots n_i + \nu}{n_i \overline{\beta}_i} \ \bigg|\ 0 \leq \nu < n_i \overline{\beta}_i,\  \overline{\beta}_i \sigma_{i, \nu}, e_{i-1} \sigma_{i, \nu} \not\in \mathbb{Z} \bigg\}.
\end{equation}
\end{theorem}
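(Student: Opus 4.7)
The plan is to combine \cref{prop:non-zero-piece}, \cref{lemma:b-exponents}, and \cref{thm:semicontinuity} through a counting argument. First I would verify that the set \eqref{eq:generic-bexponents} has exactly $\mu$ elements counted with multiplicity. This amounts to matching Yano's generating function \eqref{eq:yano-generating-series} against A'Campo's formula \eqref{eq:acampo}: for each rupture divisor $E_i$ the $N_i = n_i\overline{\beta}_i$ values $\sigma_{i,\nu},\ 0 \leq \nu < n_i\overline{\beta}_i$, produce all $N_i$-th roots of unity as monodromy eigenvalues, and by \cref{lemma:non-resonance} the resonant ones excluded in \eqref{eq:generic-bexponents} are precisely those cancelling against the factor $(t^{c_i}-1)$ of \cref{prop:char-poly} contributed by neighboring divisors. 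A direct bookkeeping then shows the surviving total is $\mu$.

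Next, for each candidate $\sigma_{i,\nu}$ in \eqref{eq:generic-bexponents}, \cref{prop:non-zero-piece} produces a one-parameter $\mu$-constant deformation $f + y\, g^{(i,\nu)}_y$ obtained via \cref{prop:deformation}, along which the locally constant geometric section $A^\omega_{\sigma_{i,\nu}-1,0}(t,y)$ is non-vanishing for generic $y$. Since $\sigma_{i,\nu}$ is the minimal element of its $\mathbb{Z}$-coset among the non-resonant candidates attached to $E_i$, the section $A^\omega_{\sigma_{i,\nu}-1,0}$ cannot belong to $S_{\sigma_{i,\nu}-2}$, so \cref{lemma:b-exponents} makes $\sigma_{i,\nu}$ a $b$-exponent of the corresponding generic fiber. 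To obtain a \emph{single} deformation witnessing \emph{all} $\mu$ candidates at once, I would form the multi-parameter family $f + \sum_{(i,\nu)} y_{i,\nu}\, g^{(i,\nu)}$, which remains semigroup constant, hence $\mu$-constant, by \cref{thm:teissier}. For each $(i,\nu)$ the locus where $A^\omega_{\sigma_{i,\nu}-1,0}$ vanishes is a proper analytic subset of parameter space (the leading coefficient depending polynomially on $y_{i,\nu}$ by the explicit expansion \eqref{eq:eq666}), so restricting to a generic complex line $(y_{i,\nu}) = (c_{i,\nu}\, y)$ yields a one-parameter $\mu$-constant deformation along which all $A^\omega_{\sigma_{i,\nu}-1,0}$ are simultaneously non-vanishing for generic $y$. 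By \cref{lemma:b-exponents} each of the $\mu$ candidates is then a $b$-exponent of the generic fiber, and by the counting in the first paragraph they exhaust all $b$-exponents; \cref{thm:semicontinuity} ensures the values are stable along a neighborhood of generic parameters.

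The main obstacle is the combining step: one must verify that the multi-parameter family stays $\mu$-constant near the origin of parameter space and that the non-vanishing of each individual $A^\omega_{\sigma_{i,\nu}-1,0}$ persists when the other deformation directions are turned on. The first point is handled by the structure of Teissier's miniversal semigroup-constant deformation of the monomial curve $C^\Gamma$, into which the directions $g^{(i,\nu)}$ embed naturally. The second requires checking that the leading term of $A^\omega_{\sigma_{i,\nu}-1,0}$ depends polynomially and non-trivially on the \emph{full} parameter vector, which follows by repeating the local analysis of the proof of \cref{prop:non-zero-piece} with all parameters active and observing that the non-zero contribution identified there persists under small perturbations; a Bertini-type genericity argument then produces the required line in parameter space.
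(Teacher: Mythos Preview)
Your proposal and the paper share the same three ingredients---\cref{prop:non-zero-piece} to produce non-zero $A^\omega_{\sigma_{i,\nu}-1,0}$, the minimality argument $\sigma_{i,\nu}-1<\sigma_{i,0}$ to rule out membership in $S_{\sigma_{i,\nu}-2}$, and the count to $\mu$---but you diverge from the paper at the combining step. You build a multi-parameter family $f+\sum y_{i,\nu}g^{(i,\nu)}$, argue that the vanishing locus of each $A^\omega_{\sigma_{i,\nu}-1,0}$ is a proper analytic subset of parameter space, and then cut by a generic line; \cref{thm:semicontinuity} enters only at the end as a stability remark. The paper instead uses \cref{thm:semicontinuity} as the \emph{engine} of the combination: it deforms once to realise a single $\sigma_{i,\nu}$, observes that because $\sigma_{i,\nu}$ is already the minimal element of $L(\lambda)$ compatible with the eigenvector $\gamma_\lambda$ vanishing to $E_i$, upper-semicontinuity locks this $b$-exponent in permanently, and then deforms again from the generic fibre to realise the next candidate. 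No multi-parameter family and no Bertini argument are needed.

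The obstacle you flag is genuine, and your proposed resolution---redoing the local computation of \cref{prop:non-zero-piece} with all parameters active---is harder than you suggest: once several $g^{(i',\nu')}$ with different $v_i$-valuations are turned on, the coordinate change $x\mapsto\bar x$ mixes all of them and the $\nu$-th piece $\overline{\omega}_\nu$ no longer isolates cleanly. A cleaner fix is available: the period $\langle A^\omega_{\sigma_{i,\nu}-1,0}(t,\boldsymbol{y}),\gamma_\lambda(t,\boldsymbol{y})\rangle$ varies holomorphically in $\boldsymbol{y}$ because the Gauss-Manin connection does, so its vanishing locus is analytic, and properness follows from \cref{prop:non-zero-piece} applied on the coordinate slice where only $y_{i,\nu}$ is non-zero. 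You should also note that the counting argument requires the sections $A^\omega_{\sigma_{i,\nu}-1,0}$ for distinct $(i,\nu)$ with the same value $\sigma_{i,\nu}$ to be linearly independent; this is precisely the duality to distinct eigenvectors established inside the proof of \cref{thm:semicontinuity}, which you invoke but do not unpack. With these two points addressed your route goes through; the paper's route is shorter because it lets semicontinuity do the combining work directly.
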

\begin{proof}
Let \( \sigma_{i, \nu}(\omega) \) be a candidate \( b \)-exponent from the set \eqref{eq:generic-bexponents}. By \cref{lemma:non-resonance}, \( \sigma_{i, \nu}(\omega) \) is non-resonant and, as a consequence of \cref{prop:non-zero-piece}, we have the existence, generically in a \( \mu \)-constant deformation of \( f \), of non-zero locally constant geometric section \( A^\omega_{\sigma_{i, \nu}-1, 0} \) given by the exponent \( \sigma_{i, \nu}(\omega) - 1 \) and associated with the rupture divisor \( E_i \).

\jump

After \cref{lemma:b-exponents} and \cref{thm:varchenko}, checking that \( A^\omega_{\sigma_{i, \nu}-1, 0} \) is not a section of \( S_{\sigma_{i, \nu}(\omega)-2} \) will prove that \( \sigma_{i, \nu}(\omega) \) is a \( b \)-exponent. But this fact follows from the condition \( 0 \leq \nu < n_i \overline{\beta}_i \) from \eqref{eq:generic-bexponents}. Indeed, no locally constant geometric section \( A^{\omega}_{\alpha -1, 0} \) associated to the exceptional divisor \( E_i \) can have an exponent \( \alpha - 1\) smaller than \( \sigma_{i, 0}(\omega) - 1\). Thus, \( A^\omega_{\sigma_{i, \nu}-1, 0} \) cannot be a section of \( S_{\sigma_{i, \nu}(\omega)-2} \) because \( \sigma_{i, \nu}(\omega) - 1 < \sigma_{i, 0}(\omega) \).

\jump

Finally, we can use the upper-semicontinuity result from \cref{thm:semicontinuity} to apply this argument to all the candidate \( b \)-exponents from \eqref{eq:generic-bexponents}. In this case, since \( \sigma_{i, \nu}(\omega) -1, 0 \leq \nu < n_i \overline{\beta}_i, \) is smaller than \( \sigma_{i, 0}(\omega) \), the upper-semicontinuity implies that when a single candidate \( b \)-exponent has been set generically, further deformations do not change that \( b \)-exponent. This way, we obtain a \( \mu \)-constant deformation of the original curve \( f \) such that all the candidates from \eqref{eq:generic-bexponents} are the \( b \)-exponents of generic fibers of this \( \mu \)-constant deformation.
\end{proof}

\bibliography{short,main}
\bibliographystyle{amsplain}

\end{document}